\numberwithin{equation}{section} 
\newcounter{cont}[section] 
\newtheorem{theorem}[cont]{Theorem}
\newtheorem{prop}[cont]{Proposition}
\newtheorem{lemma}[cont]{Lemma}
\newtheorem{corollary}[cont]{Corollary}
\theoremstyle{definition}
\newtheorem{definition}[cont]{Definition}
 \theoremstyle{remark}
 \newtheorem{remark}[cont]{Remark}
\newcommand\e{\varepsilon}
\title[Convection-reaction equations with nonlinear diffusion]{Stability properties of solutions to convection-reaction equations\\ with nonlinear diffusion}
\author[A. Alla]{Alessandro Alla}
\address[Alessandro Alla]{Dipartimento di Matematica Guido Castelnuovo, Sapienza Universit\`a di Roma,
Piazzale Aldo Moro 5, 00185 Roma (Italy)}
\email{alessandro.alla@uniroma1.it}
\author[A. De Luca]{Alessandra De Luca}
\address[Alessandra De Luca]{Dipartimento di Matematica ``Giuseppe Peano'', Universit\`a di Torino,
Via Verdi 8, 10124 Torino (Italy)}
\email{a.deluca@unito.it}
\author[R. Folino]{Raffaele Folino}
\address[Raffaele Folino]{Departamento de Matem\'aticas y Mec\'anica, Instituto de Investigaciones en Matem\'aticas Aplicadas y en Sistemas, Universidad Nacional Aut\'onoma de M\'exico, Circuito Escolar s/n, C.P. 04510 Cd. de M\'exico (M\'exico)}
\email{folino@aries.iimas.unam.mx}
\author[M. Strani]{Marta Strani}
\address[Marta Strani]{Dipartimento di Scienze Molecolari e Nanosistemi, Universit\`a Ca' Foscari Venezia Mestre,
Campus Scientifico, Via Torino 155, 30170 Venezia Mestre (Italy)}
\email{marta.strani@unive.it}
\keywords{Nonlinear diffusion; steady-states; asymptotic behavior; metastability, numerical approximation}
\date{}
\begin{document}

\maketitle
\begin{abstract}
In this paper we study a convection-reaction-diffusion equation of the form
\begin{equation*}
u_t=\varepsilon(h(u)u_x)_x-f(u)_x+f'(u), \quad t>0,
\end{equation*}
with a nonlinear diffusion in a bounded interval of the real line. In particular, we first focus our attention on the existence of stationary solutions with at most one zero inside the interval, studying their behavior with respect to the viscosity coefficient $\varepsilon>0$ and their stability/instability properties. Then, we investigate the large time behavior of the solutions for finite times and the asymptotic regime. We  also show numerically that, for a particular class of initial data, the so-called metastable behavior occurs, meaning that the time-dependent solution persists for an exponentially long (with respect to $\varepsilon$) time in a transition non-stable phase, before converging to a stable configuration. 
\end{abstract}

\section{Introduction and main results}
We study the following initial boundary value problem
\begin{equation}\label{prob}
	\begin{cases} 
		u_t=\varepsilon(h(u)u_x)_x -f(u)_x + f'(u), \qquad    & x \in (0,\ell), \quad t>0, \\
		u(x,0) = u_0(x), &x\in[0,\ell],\\
		u(0,t) = u(\ell,t)=0,  & t\geq0,
	\end{cases}	
\end{equation}
where $u:=u(x,t)$ is the unknown function and $\e>0$ is a fixed parameter to be taken conveniently small which, 
compared to the fluid-dynamics phenomena, can be interpreted as a viscosity coefficient.
As concerning the functions appearing in the convection-reaction-diffusion equation, the diffusion term is modeled by a $C^1$ function $h$ such that 
\begin{equation}\label{segnodih}
	h(u)\geq h_0 >0\quad \text{ and }\quad h'(u)u\leq0, \qquad \mbox{ for any } u\in[-R,R],
\end{equation}
for some positive constants $h_0$ and $R$.
The positivity of the function $h$ ensures that \eqref{prob} is an IBVP for a parabolic PDE when $u\in[-R,R]$;
in all the paper we shall consider initial data $u_0$ such that the solution of \eqref{prob} belongs to $[-R,R]$ for any $t\geq0$.
There are several physical situations where the mathematical modelling involves a (strictly positive) phase-dependent diffusion coefficient,
such as the Mullins diffusion model for thermal grooving \cite{Broa,Mullins}, where the function 
\begin{equation}\label{eq:Mullins}
	h(u)= (1+u^2)^{-1}
\end{equation}
is used to describe the development of surface groove profiles on a heated polycrystal by the mechanism of evaporation-condensation.
Another example of function satisfying assumption \eqref{segnodih} is the Gaussian $h(u)=e^{-u^2}$; 
notice that, for any $R>0$ it is possible to choice $h_0>0$ (depending on $R$) such that assumption \eqref{segnodih} is satisfied
in both the case of the Mullins diffusion \eqref{eq:Mullins} and that one of the Gaussian function.

Finally, the flux $f$ is a $C^2$ function satisfying the following conditions 
\begin{equation}\label{segnodif}
	f(0)=f'(0)=0,\quad f''(u)>0\quad \text{and} \quad f'(-u)=-f'(u), \qquad \mbox{for any } u\in[-R,R].
\end{equation}
The previous assumptions imply that
\begin{equation}\label{segnodif1}
	f'(u)u>0, \qquad \mbox{ for any } u \neq 0,
\end{equation}
and that for all $A\in(0,R)$ there exists a constant $K$ (depending on $A$) such that
\begin{equation}\label{segnodif2}
 f'(u) \leq K u, \qquad \mbox{ for any }  u \in [0,A].
\end{equation}
The convection-reaction-diffusion equation in \eqref{prob} is a generalization of the so called Burgers--Sivashinsky equation
\begin{equation}\label{BSeq}
	u_t=\varepsilon u_{xx} -u u_x+u,
\end{equation}
which can be obtained by choosing $h(u)=1$ and $f(u)=u^2/2$, see \cite{BKS,marta, sunward} and references therein. 
Equation \eqref{BSeq} arises from the study of the dynamics of an upwardly propagating flame-front in a vertical channel  (see \cite{instabilities} for the derivation of the model). Indeed (and differently from the mathematical model in an unbounded domain, which is a typical example of a free interface system), it comes out in the study of premixed gas flames in vertical tubes, where the upwardly propagating flames often assume a characteristic shape with the tip of their parabolic profile located somewhere near the center of the channel. 

Precisely, if one considers the nonlinear evolution equation for  the flame interface  derived in \cite{instabilities} under various physical assumptions, the dimensionless shape of the flame front  is modelized by a function $y=y(x,t)$ which satisfies
$$y_t-\frac{1}{2}y^2_x= \e y_{xx}+y- \int_0^{\ell} y \, dx, \qquad y(0,t)=y(\ell,t)=0.$$
Hence, the function $u=u(x,t)$ defined as the slope $u=-y_x$ satisfies \eqref{BSeq}; we remark that if $u(\cdot, t)$ is close to be linear, then $y$ is close to a parabola, and its tip  corresponds with the point where $u$ vanishes, called {\it interface}.

As concerning the dynamics of the flame front  $y$, it is worth mentioning the {work \cite{instabilities2}}, where numerical simulations and rigorous results show that  the flame motion markedly depends on the initial conditions and  displays the following trend for $\e \ll 1$:
\begin{itemize}
\item at the early stage of its developments, the solution is rapidly attracted to some intermediate stage where the parabolic flame front interface is formed and  assumes an asymmetric parabolic profile; this is however only a transient phase, but the subsequent motion occurs at a very low speed rate, which becomes even slower provided $\e$ to be small enough. In this process, the tip of the parabola gradually moves towards one of the walls with an exponentially slow speed; the fact that the time interval corresponding to this phase may be extremely long for small $\e$ creates the illusion that the flame front has reached some final equilibrium. However,  this is merely  a {\it quasi-steady} state, and, after a very long time,
 the flame front finally collapses by reaching one of the walls and attains its final equilibrium state. 
\end{itemize}
Translated into the dynamics of the solution $u$ to \eqref{BSeq}, and recalling that the tip of the parabola corresponds to the interface $u=0$, we observe
a {\it slow motion} phenomenon, which is very well known in literature as metastability; precisely, as rigorously studied in \cite{BKS} when $f(u)=u^2/2$ and \cite{marta, sunward} in the case of a generic flux function $f$, a necessary condition for the solutions to \eqref{BSeq} to exhibit a metastable behavior is that the initial datum satisfies
\begin{equation}\label{NC}
	u_0(x) <0, \quad \mbox{for} \quad x \in (0,a) \qquad \mbox{and} \qquad u_0(x) >0, \quad \mbox{for} \quad x \in (a, \ell),
\end{equation} 
for some $a \in (0,\ell)$. Under assumption \eqref{NC}, the dynamics for \eqref{BSeq} is the following:
\begin{itemize}
\item in a first transient phase the initial configuration develops into a layered profile, which is far from any stable configuration of the system, but the subsequent motion of the solution, studied as the one-dimensional motion of the interface $I := \{ a(t) \, : \, u(t, a(t)) =0 \}$ towards one of the walls $x=0$ or $x=\ell$, is extremely slow, provided $\e\ll 1$. 
\end{itemize}

The numerical computations contained in \cite{sunward} also suggest that other kind of initial data lead to solutions that attain a stable equilibrium configuration in an $\mathcal{O}(1)$ time interval.

There are several contributions who investigated problems like \eqref{BSeq} in different dimensional sets and under more general forms for the nonlinearity appearing in the equation: to name some of these papers, we recall here \cite{BOR,CLS, H, HS, HW}.
On the other hand, the problem with a nonlinear diffusion seems to be unexplored until now, despite its  several applications. Also, apart from the context of gas combustion, it is worth mentioning that equation \eqref{BSeq} also arises in the statistical description of biological evolution, where $-y$ (recall again  that $u=-y_x$) represents the entropy, see \cite{9inBKS}.

\subsection{Main results} In this section we give a description of the main results contained in this paper, which concern both the initial boundary value problem \eqref{prob}
and the stationary problem associated to it. For the sake of simplicity, we will consider initial data that change sign at most one time inside the interval $(0,\ell)$; in particular, given the parabolic nature of \eqref{prob}, if we assume $u_0 \in C^0([0,\ell])$ to be such that $u_0(0)=u_0(\ell)=0$, there exists a unique classical solution to \eqref{prob} (see for instance \cite{Lady}).

The first part of our results, contained in Sections \ref{sec:stat} and \ref{sec3}, concerns the stationary problems associated to \eqref{prob}, 
i.e. the solutions $u:=u(x)$ to the following BVP
\begin{equation}\label{stat:intro}
\begin{cases} \varepsilon(h(u)u_x)_x -f(u)_x + f'(u)=0, \qquad  x \in (0,\ell), \\
u(0) = u(\ell)=0. 
\end{cases}
\end{equation}
At first, we give a complete description of the solutions to \eqref{stat:intro}, by proving that, if $\e< \gamma \, (\ell/ \pi)^{2}$ (where $\gamma$ is a positive constant depending on $f$ and $h$), then there exist a positive solution $u_{+,\e}$ and a negative solution $u_{-,\e}$ to \eqref{stat:intro}. Likewise, if $\e< \gamma \, (\ell/ 2\pi)^{2}$, there exist two solutions $u_{ 1^{\pm},\e}$ with exactly one zero in the interval $(0,\ell)$; for the precise statements, see Propositions \ref{positivesolut}, \ref{esistelau-} and \ref{onezerosol}.
The previous results can be generalized to the case of solutions with more than one zero in $(0,\ell)$, see Remark \ref{morezeros}.

Going further, in Section \ref{sec3} we firstly describe the behavior of the steady states with respect to $\e>0$; precisely, in Lemmas \ref{lemmadecrescenza} and \ref{lemmacrescenza} we prove that the positive and negative steady states $u_{\pm,\e}$ are respectively decreasing and increasing with respect to the parameter $\e$. The latter property can be used to prove the following convergence-type results for $\e \to 0^+$ (see Propositions \ref{convergenzau+} and \ref{velocitaconv} and Corollary \ref{convuM}).

\begin{itemize}
\item The positive solution $u_{+,\e}$ converges to $x$ as $\e \to 0^+$, while the negative solution $u_{-,\e}$ converges to $x-\ell$ as $\e \to 0^+$, uniformly on compact subsets of $[0,\ell)$. Moreover, 
it holds that
$$| u'_{+,\e}(\ell) | = \mathcal{O} \left( \frac{1}{\e}\right) \qquad \mbox{and} \qquad | u'_{-,\e}(0) | = \mathcal{O} \left( \frac{1}{\e}\right). $$

\item The function $u_{1^+,\e}$ converges uniformly on compact subsets of $\left[0,\frac{\ell}{2}\right) \cup \left(\frac{\ell}{2}, \ell\right)$ to the function  $x \, \chi_{(0,\ell/2)} + (x-\ell) \, \chi_{(\ell/2,\ell)}$, while the function $u_{1^-,\e}$ converges uniformly on compact subsets of $(0,\ell)$ to to the function $x-\frac{\ell}{2}.$
\end{itemize}
Finally, in Theorem \ref{stabthm} we describe the stability properties of the steady states; we show that, while $u_{\pm,\e}$ are asymptotically stable, 
as for the solutions $u_{1^{\pm}, \e}$, one can choose initial data arbitrarily close to them for which the corresponding time dependent solution converges to either 
$u_{+,\e}$ or $u_{-,\e}$. In particular, the most interesting case is the one of $u_{1^-,\e}$, which is not only unstable, but {\it metastable}: specifically, starting from small perturbations of such an unstable steady state, the corresponding time-dependent solutions are pushed away towards one of the stable equilibrium configurations $u_{\pm,\e}$, but the motion can be exponentially slow with respect to the parameter $\e$. 
We refer the reader to \cite{BKS,marta,sunward} for the linear diffusion case $h\equiv1$.
This phenomenon will be the object of the last part of the paper; to start with, in the first part of Section \ref{hyperbolic} we consider the first-order hyperbolic equation
 \begin{equation*}\label{probhyp}
\begin{cases} U_t+f'(U)U_x -f'(U)=0,   \qquad & x \in (0,\ell), \\
U(0,t) = U(\ell,t)=0,  & t>0, \\
U(x,0) = u_0(x), & x\in[0,\ell],\\
\end{cases}	
\end{equation*}
formally obtained by setting $\e=0$ in \eqref{prob}. By considering viscosity solutions as they are classically defined, we restrict the analysis to the following three  kind of initial data:
\begin{itemize}
\item $u_0$ of type A, meaning $u_0 >0$ (or, equivalently $u_0<0$) in $(0,\ell)$,
\item $u_0$ of type B, meaning $u_0 <0$ in $(0,x_0)$ and $u_0>0$ in $(x_0,\ell)$, for some $x_0 \in (0,\ell)$,
\item $u_0$ of type C, meaning $u_0 >0$ in $(0,x_0)$ and $u_0<0$ in $(x_0,\ell)$, for some $x_0 \in (0,\ell)$,
\end{itemize}
and we prove the following result (see Theorem \ref{th:asyhyp} for the precise statement):
\begin{itemize}
\item If $u_0$ is of type $A$, then $U(x,t)$ converges to either $x$ or $x-\ell$ as $t \to +\infty$, depending on whether $u_0>0$ or $u_0<0$. 
\item If $u_0$ is of type $B$, then $U(x,t)$ converges to  $x-x_0$ as $t \to +\infty$.
\item If $u_0$ is of type $C$, then $U(x,t)$ may converge to either $x$,  $x-\ell$, or $x \, \chi_{(0,\ell/2)} + (x-\ell) \, \chi_{(\ell/2,\ell)}$ as $t \to +\infty$ (depending on whether $x_0> \frac{\ell}{2}$, $x_0=\frac{\ell}{2}$ or $x_0 < \frac{\ell}{2}$).
\end{itemize}
The previous description allows us to understand the finite time behavior of the parabolic problem \eqref{prob} when $\e$ is positive but small. This results are contained in Theorem \ref{teo:finitetime} and the subsequent remark. We underline that initial data of type B are the most interesting here, since (as in the classical linear case) they lead to a metastable behavior for the time-dependent solutions to \eqref{prob} (recall the necessary condition given in \eqref{NC}). Having in mind  to illustrate this peculiar phenomenon,  the last  section of the paper is dedicated to some numerical simulations showing the long time dynamics of the solutions to \eqref{prob}. In particular, in Section \ref{numerics} we numerically show the stable/unstable nature of the four different steady states, paying a particular attention to initial data of type B, for which one can see the (exponentially) slow motion of the solutions 
when $\e>0$ is very small.

\section{The stationary problem}\label{sec:stat}
In this section we perform the study of the  stationary problem related to \eqref{prob}, namely
\begin{equation}\label{stationaryprob}
\begin{cases} \varepsilon(h(u)u_x)_x =f(u)_x - f'(u),  \qquad x\in (0,\ell), \\
 u(0)=u(\ell)=0.
\end{cases}	
\end{equation}
In particular we focus our attention on solutions with at most one zero inside the interval $(0,\ell)$, and we show that there exist exactly four solutions to \eqref{stationaryprob} enjoying the following property: two of them do not have zeroes inside the interval (one is strictly positive and the other one is strictly negative, see Propositions \ref{positivesolut} and \ref{esistelau-}), while the other two have  exactly one zero, as shown in Proposition \ref{onezerosol}.
We begin our analysis by proving a necessary condition for the existence of nontrivial solutions to \eqref{stationaryprob}.

\begin{prop}\label{propesistenza}
There exists $\gamma >0$ such that, if $\varepsilon \geq \gamma \, \ell^2 \, \pi^{-2}$, then there are no solutions to \eqref{stationaryprob} apart from the trivial one $u(x)=0$ for all $x \in [0,\ell].$
\end{prop}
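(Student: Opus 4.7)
The plan is a standard energy/Poincar\'e-type estimate. I would multiply the stationary equation in \eqref{stationaryprob} by $u$ and integrate over $(0,\ell)$. Using the Dirichlet boundary conditions $u(0)=u(\ell)=0$, the left-hand side becomes $-\varepsilon\int_0^\ell h(u)u_x^2\,dx$ after one integration by parts. For the convection term, another integration by parts gives
\[
\int_0^\ell f(u)_x\,u\,dx = -\int_0^\ell f(u)u_x\,dx = -[F(u)]_0^\ell = 0,
\]
where $F$ is an antiderivative of $f$; the boundary term vanishes because $u$ does at the endpoints. This reduces the identity to
\[
\varepsilon\int_0^\ell h(u)u_x^2\,dx = \int_0^\ell f'(u)u\,dx.
\]

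Next I would bound the two sides. By assumption \eqref{segnodih}, $h(u)\geq h_0>0$, so the left-hand side is at least $\varepsilon h_0\int_0^\ell u_x^2\,dx$. For the right-hand side, the symmetry and convexity of $f$ in \eqref{segnodif} give $f'(-u)(-u)=f'(u)u$, so the consequence \eqref{segnodif2} (with $A=R$) extends to $f'(u)u\leq Ku^2$ on the whole interval $[-R,R]$ for some constant $K=K(R)$ (equivalently, $K$ can be taken as $\max_{[-R,R]} f''$ via Taylor at $0$). Combining with the one-dimensional Poincar\'e inequality $\int_0^\ell u^2\,dx \leq (\ell/\pi)^2 \int_0^\ell u_x^2\,dx$ for functions vanishing at the endpoints, the energy identity yields
\[
\varepsilon h_0 \int_0^\ell u_x^2\,dx \;\leq\; K\int_0^\ell u^2\,dx \;\leq\; K\left(\tfrac{\ell}{\pi}\right)^2\int_0^\ell u_x^2\,dx.
\]

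If $u$ is a nontrivial solution, then $\int_0^\ell u_x^2\,dx>0$ (otherwise $u$ would be constant, hence zero by the boundary conditions), and dividing gives $\varepsilon \leq (K/h_0)\,\ell^2/\pi^2$. Taking $\gamma:=K/h_0$ (or any slightly larger value to absorb the boundary case in the $\geq$ version of the statement, since equality in Poincar\'e requires $u=c\sin(\pi x/\ell)$, for which the bound $f'(u)u\leq Ku^2$ is not saturated), we conclude that $\varepsilon\geq \gamma\,\ell^2/\pi^2$ forces $u\equiv 0$.

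I do not foresee any serious obstacle: the argument is routine once one notices that the convection term contributes nothing thanks to the Dirichlet data. The only mildly delicate point is the passage from the natural strict inequality $\varepsilon>\gamma\,\ell^2/\pi^2$ to the stated non-strict one $\varepsilon\geq\gamma\,\ell^2/\pi^2$; this is handled either by a harmless enlargement of $\gamma$ or by the above observation that the two inequalities feeding into the estimate cannot both be tight simultaneously.
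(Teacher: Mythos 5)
Your proof is correct and follows essentially the same energy/Poincar\'e argument as the paper: multiply by $u$, integrate by parts, kill the convection term via the antiderivative $F$, and compare the resulting quadratic forms. In fact your version is slightly cleaner, since you use the solution-independent lower bound $h_0$ from \eqref{segnodih} (the paper instead uses $m=\min_{x}h(u(x))$, which makes its constant $\gamma=Km^{-1}$ depend on the solution), and you explicitly justify extending \eqref{segnodif2} to negative values and handle the non-strict inequality in the statement, points the paper glosses over.
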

\begin{proof}
Let us define 
$$m:= \min_{x \in [0,\ell]} h(u(x)),$$ 
then multiply \eqref{stationaryprob} by $u$ and integrate by parts to obtain
$$ \varepsilon \int_0^{\ell} h(u)u^2_x \, dx =  \int_0^{\ell} f(u) u_x \, dx +\int_0^{\ell} f'(u)u \, dx.  $$
Denoting with $F$ the primitive of the function $f$, since
$$\int_0^{\ell} f(u) u_x \, dx = F(u(\ell))-F(u(0))=F(0)-F(0)=0,$$
we have
$$m \, \varepsilon \int_0^{\ell} u_x^2 \, dx   \leq \frac{ K \ell^2}{\pi^2} \int_0^{\ell} u_x^2 \, dx, $$
where in the right-hand side we used \eqref{segnodif2} and the Poincar\`e-Sobolev inequality. The thesis then follows with $\gamma=Km^{-1}.$
\end{proof}

We go on giving the notions of sub/super-solution to \eqref{stationaryprob} of class $C^2$ which can be used as tools to prove the existence of a solution to \eqref{stationaryprob} by a standard monotone iteration technique (see \cite{evans}). In addition, even if throughout this section we will always deal with classical sub/super-solutions, for the sake of completeness, we provide the reader with the rigorous definition of \emph{weak} sub/super-solution to \eqref{stationaryprob}: this notion will come into play in Section \ref{sec3} in order to construct certain sub/super-solutions with discontinuities in the first derivative, when proving the stability/instability of the steady states introduced in the present section.
\begin{definition}\label{def_subsuper}
A function $u \in H^1([0,\ell])$ is a weak sub-solution (respectively a weak super-solution) to \eqref{stationaryprob} if $u(0), u(\ell) \leq 0$ (respectively $u(0), u(\ell) \geq 0$) and
\begin{equation*}\label{dis}
\int_0^\ell \left\{\varepsilon h(u)u_x \varphi_x+[f'(u)u_x-f'(u)]\varphi \right\} \, dx \leq 0 \qquad \rm{(respectively \geq 0 )}
\end{equation*}
for all $\varphi \in C^1([0,\ell])$, $\varphi \geq 0$ in $[0,\ell]$ such that $\varphi(0)=\varphi(\ell)=0$.

A function $u \in C^2([0,\ell])$ which satisfies \eqref{dis} is a sub-solution (respectively a  super-solution)  to \eqref{stationaryprob}. In particular, for all $x \in [0,\ell]$  it satisfies 
\begin{equation*}
-\varepsilon(h(u)u_x)_x  +f'(u)u_x-f'(u)\leq 0\qquad \rm{(respectively \geq 0 )}.
\end{equation*}
\end{definition}


\begin{prop}\label{positivesolut}
For any $0<\e< \gamma \, (\ell/ \pi)^{2}$, with $\gamma$ being as in Proposition \ref{propesistenza}, problem \eqref{stationaryprob} admits a  positive solution $u_{+,\varepsilon}$ such that 
\begin{equation}\label{propr-sol-pos}
     u_{+,\varepsilon}(x) \leq x, \quad u_{+,\varepsilon}'(x)\leq 1 \qquad \mbox{and} \qquad   u_{+,\varepsilon}''(x)\leq 0,
\end{equation}
for all $x \in (0,\ell).$
\end{prop}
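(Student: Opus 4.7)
The plan is to construct $u_{+,\e}$ via the classical sub- and super-solution method (Definition \ref{def_subsuper} together with the monotone iteration technique referenced in \cite{evans}), with super-solution $\bar u(x)=x$ and sub-solution $\underline u(x)=\delta\sin(\pi x/\ell)$ for a small $\delta>0$. For $\bar u$ one has $\bar u_{x}\equiv 1$ and $\bar u_{xx}\equiv 0$, so the super-solution inequality $\e(h(\bar u)\bar u_{x})_{x}\le f'(\bar u)(\bar u_{x}-1)$ reduces to $\e h'(x)\le 0$, which holds on $[0,\ell]$ by \eqref{segnodih}. For $\underline u$, using $\underline u_{xx}=-(\pi/\ell)^{2}\underline u$, the property $h'(0)=0$ (a consequence of \eqref{segnodih}), and the Taylor expansion $f'(\underline u)=f''(0)\underline u+O(\delta^{2})$, the sub-solution inequality becomes, at leading order in $\delta$, $\e h(0)(\pi/\ell)^{2}\le f''(0)$; this is exactly the range $\e<\gamma(\ell/\pi)^{2}$ after identifying $\gamma$ in terms of $f''(0)$ and $h(0)$ (consistently with Proposition \ref{propesistenza}), and the $O(\delta^{2})$ corrections are absorbed by taking $\delta$ small. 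Choosing in addition $\delta\le \ell/\pi$ ensures $\underline u\le \bar u$ on $[0,\ell]$.

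The standard monotone iteration then produces a classical solution $u_{+,\e}$ with $\underline u\le u_{+,\e}\le \bar u$, which is positive on $(0,\ell)$ and gives the first estimate $u_{+,\e}(x)\le x$ in \eqref{propr-sol-pos}. Since $u_{+,\e}$ and $\bar u$ coincide at $x=0$, one-sided differentiation there gives $u'_{+,\e}(0)\le 1$; similarly, positivity in the interior together with $u_{+,\e}(\ell)=0$ forces $u'_{+,\e}(\ell)\le 0$.

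To extend this to $u'_{+,\e}(x)\le 1$ at every $x\in(0,\ell)$, I would argue by maximum principle. If $u'_{+,\e}$ attained a value strictly larger than $1$, then by the endpoint bounds such a value would be reached at an interior critical point $\xi$, where $u''_{+,\e}(\xi)=0$. Evaluating the stationary equation at $\xi$ yields
\begin{equation*}
f'\bigl(u_{+,\e}(\xi)\bigr)\bigl(u'_{+,\e}(\xi)-1\bigr)=\e\, h'\bigl(u_{+,\e}(\xi)\bigr)\bigl(u'_{+,\e}(\xi)\bigr)^{2},
\end{equation*}
whose left-hand side is strictly positive by \eqref{segnodif1} (since $u_{+,\e}(\xi)>0$ and $u'_{+,\e}(\xi)>1$), while the right-hand side is non-positive by \eqref{segnodih}, a contradiction.

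The step I expect to be the main obstacle is the concavity $u''_{+,\e}\le 0$. Rewriting the equation as $\e h(u)u''=f'(u)(u'-1)-\e h'(u)(u')^{2}$, one sees that, under the already-established $u\ge 0$ and $u'\le 1$, the two summands on the right have opposite signs, so no pointwise cancellation argument is available directly. My plan is to carry concavity along the monotone iteration itself: starting from $u_{0}=\bar u=x$, which is trivially concave, and choosing an appropriate linearisation for which each iterate $u_{n+1}$ solves a linear Dirichlet problem, one verifies via a maximum principle for the linearised operator that $u''_{n+1}\le 0$ whenever $u''_{n}\le 0$ and $u'_{n}\le 1$. Since concavity is preserved under pointwise convergence, the monotone limit $u_{+,\e}$ inherits $u''_{+,\e}\le 0$; once this is in hand, the bound $u'_{+,\e}\le 1$ can alternatively be read off from the monotonicity of $u'_{+,\e}$ together with $u'_{+,\e}(0)\le 1$.
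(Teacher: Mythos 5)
Your construction of $u_{+,\e}$ is exactly the paper's: super-solution $v_1(x)=x$ (using $h'(x)\le 0$ for $x\ge 0$, from \eqref{segnodih}) and sub-solution $\delta\sin(\pi x/\ell)$ with $\delta$ small, monotone iteration in between, yielding $0<u_{+,\e}(x)\le x$; the smallness condition on $\e$ enters in the same way. Your proof of $u'_{+,\e}\le 1$, on the other hand, is genuinely different and stands on its own: the paper obtains $u'\le 1$ only as a consequence of concavity ($u'$ decreasing together with $u'(0)\le 1$), whereas you get it directly by evaluating $\e h'(u)(u')^2+\e h(u)u''=f'(u)(u'-1)$ at an interior maximum point of $u'$ (where $u''=0$) and reading off a sign contradiction from \eqref{segnodih} and \eqref{segnodif1}. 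That argument is correct and usefully decouples the gradient bound from the concavity.

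The genuine gap is the concavity $u''_{+,\e}\le 0$, which is one of the three assertions of \eqref{propr-sol-pos} and is used later (e.g.\ in Lemma \ref{lemmadecrescenza}). Your proposed route --- propagating concavity through the monotone iteration --- is only a plan, and I do not see how to execute it: each iterate solves a linear Dirichlet problem whose coefficients depend on the previous iterate, and concavity of solutions to such problems is not preserved without strong structural conditions on those coefficients; nothing in \eqref{segnodih}--\eqref{segnodif} supplies them. The paper's mechanism is the same device you already use for $u'\le 1$, applied to $u$ itself: at any interior critical point $x_2$ of $u_{+,\e}$ one has $u'(x_2)=0$, so the equation collapses to $\e h(u(x_2))\,u''(x_2)=-f'(u(x_2))<0$; hence every interior critical point is a strict local maximum, so there is exactly one, and from this unimodality the paper concludes $u''_{+,\e}\le 0$. (That final step is itself terse in the paper --- unimodality alone does not imply concavity, and, as you correctly observe, the two terms $f'(u)(u'-1)$ and $-\e h'(u)(u')^2$ have opposite signs under the bounds already established --- but the critical-point evaluation is the intended argument, not an iteration-based one.) As written, your proposal does not establish $u''_{+,\e}\le 0$, so the proposition is not fully proved.
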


\begin{proof}
In order to prove the existence of a classical solution to problem \eqref{stationaryprob}, we seek a smooth super-solution $v_1$ and a smooth sub-solution $v_2$ such that $v_1\geq v_2$, according to the standard monotone methods \cite{evans,sattinger}.
To this purpose, let 
\begin{equation}\label{operatoreL}
	Lu:= -\varepsilon(h(u)u_x)_x  +f'(u)u_x-f'(u).  
\end{equation}
We immediately observe that the function $v_1(x)=x$ is trivially a super-solution: indeed $Lv_1(x)= -\varepsilon h'(x) \geq 0$ in virtue of assumption \eqref{segnodih}, which implies that $h'(x)\leq 0 $ (since $x \in [0,\ell]$).
Going on, we consider the function $v_2(x)=\alpha\sin\left(\frac{\pi}{\ell}x\right)$
with $\alpha>0$ (to be determined later) such that
\begin{equation}\label{sceltadialpha}
    \alpha\frac{\pi}{\ell}\leq 1. 
\end{equation}
By direct calculations, we have that 
\begin{equation}\label{Lv2<0}
\begin{split}
    Lv_2=\, &-\varepsilon \alpha^2\left(\frac{\pi}{\ell}\right) ^2 h'(v_2)\cos^2\left(\frac{\pi}{\ell}x\right)+\varepsilon \alpha\left(\frac{\pi}{\ell}\right) ^2 \sin\left(\frac{\pi}{\ell}x\right)h(v_2)\\
    &+ f'(v_2) \left[\alpha\frac{\pi}{\ell}\cos\left(\frac{\pi}{\ell}x\right)-1\right]\\
    \leq & -\varepsilon \alpha\left(\frac{\pi}{\ell}\right) h'(v_2)+\varepsilon \alpha\left(\frac{\pi}{\ell}\right) ^2 h(v_2)+f'(v_2) \left[\alpha\frac{\pi}{\ell}-1\right].\\
\end{split}
\end{equation}
We notice that in the last row of \eqref{Lv2<0}, the first two terms are positive by assumption \eqref{segnodih} and the term $f'(v_2) \left[\alpha\frac{\pi}{\ell}-1\right]$ is negative in virtue of \eqref{segnodif1} and \eqref{sceltadialpha}. Hence, defining 
$$m_1:=\max_{x\in [0,\ell]}|h'(v_2(x))|,\quad m_2:=\max_{x\in [0,\ell]} h(v_2(x)),$$ and $$m_3:= \max_{x\in [0,\ell]} f'(v_2(x)),$$ we can conclude that there exists a suitable $\alpha>0$ depending on $\e,\ell, m_1,m_2,m_3$ such that $Lv_2\leq 0$. Thus $v_2$ is a sub-solution, and $v_1\geq v_2$ thanks to \eqref{sceltadialpha}, as desired. 
From this, we can deduce that there exists a stationary solution $u_{+,\varepsilon}$ to problem \eqref{prob} such that $$v_2(x)\leq u_{+,\varepsilon}(x)\leq v_1(x).$$
Therefore $u_{+,\varepsilon}$ is positive and such that $u_{+,\varepsilon} (x) \leq x$ for any $x \in (0,\ell)$.
In particular,
$u_{+,\varepsilon}$ has at least one maximum. In order to show that $u''_{+,\varepsilon}\leq 0$, we prove that such maximum is unique, arguing by contradiction: we assume that $u_{+,\varepsilon}$ has a local minimum at some point $x_2\in (0,\ell)$. In this case, by \eqref{stationaryprob} we would have that 
\begin{equation*}\label{eq:exmin}
  \varepsilon h(u_{+,\varepsilon}(x_2))u_{+,\varepsilon}''(x_2)= -f'(u_{+,\varepsilon}(x_2)),
  \end{equation*}
which in turn gives rise to a contradiction since the left hand side is positive whereas the right hand side is negative. Hence $u''_{+,\varepsilon}$ is negative for all  $x \in (0,\ell)$.
Finally, since $u'_{+,\varepsilon}$ is a decreasing function and $u_{+,\varepsilon}'(0)\leq 1$ (since $u_{+,\varepsilon} \leq x$), it easily follows that $u_{+,\varepsilon}'(x)\leq 1$ for all $x \in (0,\ell)$. 
\end{proof}

\subsection*{Uniqueness of the solution} As concerning the uniqueness of the positive solution of \eqref{stationaryprob}, by proceeding as in \cite{BKS} we consider  the following Cauchy problem
\begin{equation}\label{cauchyprob}
\begin{cases} \varepsilon(h(u)u_x)_x =f(u)_x - f'(u),  \qquad x>0, \\
 u(0)=0, \quad u_x(0)=\alpha \in (0,\bar \alpha),
\end{cases}	
\end{equation}
with $\bar \alpha>0$ to be properly chosen; problem \eqref{cauchyprob} can be rewritten as the first order system
\begin{equation*}
	\begin{cases}
		u_x=v,\\
		\e h(u)v_x=-{\e h'(u)}v^2+{f'(u)}v-{f'(u)}, 
	\end{cases}
\end{equation*}
with initial conditions  $u(0)=0$ and $v(0)=\alpha$.
Notice that, since $h'(0)=f'(0)=0$, then $v_x(0)=0$ and, in order to establish the convexity of the function $u$ near $0$, we need to compute the sign of $v_{xx}(0)$.
A direct computation gives
$$v_{xx}(0)=-\frac{h''(0)}{h(0)}\alpha^3+\frac{f''(0)}{\e h(0)}\alpha(\alpha-1)=\frac{\alpha}{\e h(0)}\left[-\e h''(0)\alpha^2+f''(0)(\alpha-1)\right],$$
and as a consequence, $v_{xx}(0)<0$ if and only if $\alpha\in(0,\bar\alpha)$, with
\begin{equation}\label{baralfa}
	\bar\alpha:=\frac{2}{1+\sqrt{1-4\e\frac{h''(0)}{f''(0)}}}.
\end{equation}
Notice that $\bar\alpha\to1$ as $\e\to0^+$ and that in the constant case $h\equiv 1$, one has $\bar\alpha=1$. 
It is easy to check that if $\alpha\geq\bar\alpha$, the corresponding solution of \eqref{cauchyprob} is monotone increasing, while
 if we choose $\alpha \in (0,\bar \alpha)$, then there exists $L:=L(\alpha)>0$ such that $u>0$ in $(0,L(\alpha))$ and $u(L(\alpha))=0$. Hence, since we already proved that there exists $\alpha^* \in (0,\bar \alpha)$ such that $L(\alpha^*)=\ell$ (corresponding to the positive solution $u_{+,\e}$), in order to prove the uniqueness of the positive solution $u_{+,\e}$ it is sufficient to prove that $L$ is a strictly increasing function of $\alpha$. The monotonicity of $L$ has been proved for the constant case $h\equiv1$ and for the reaction term $f(u)=u^2/2$ in \cite{BKS}, where the authors are able to provide an expression for $L(\alpha)$ (see \cite[Proposition 4.2, Appendix A]{BKS}).
Following their strategy, we introduce the change of variable $s=u(x)$ and $p=p(s)=h(u(x(s)))u_x(x(s))$; since $p_s= (h u_x)_x \frac{d x}{ds}= (h u_x)_x \,  \frac{h}{p}$, the function $p$ solves
$$ \e p  \, p_s =f'(s)(p-h(s)).$$
Conversely to the constant case $h\equiv1$, in the latter equations we can not separate variables 
and we can no longer obtain an expression for $L(\alpha)$, as done in \cite{BKS}. 

In general, when $h$ is not constant, providing an expression for $L(\alpha)$ and an analytical proof of its monotonic behavior is not an easy task and it still remains an open problem; however, we can provide numerical evidences when $h$ is explicitly given, for example by the Mullins diffusion \eqref{eq:Mullins} or by the Gaussian function (see Figure \ref{lal}). 
\begin{figure}[htbp]
\includegraphics[scale=0.4]{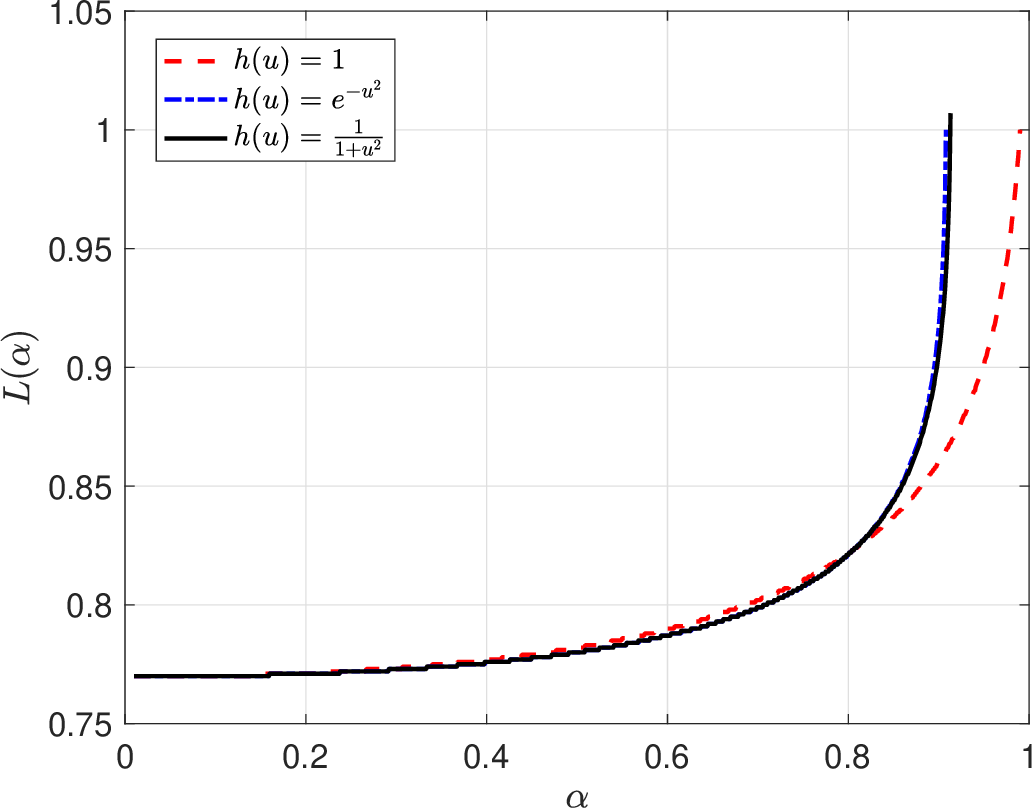}
\includegraphics[scale=0.4]{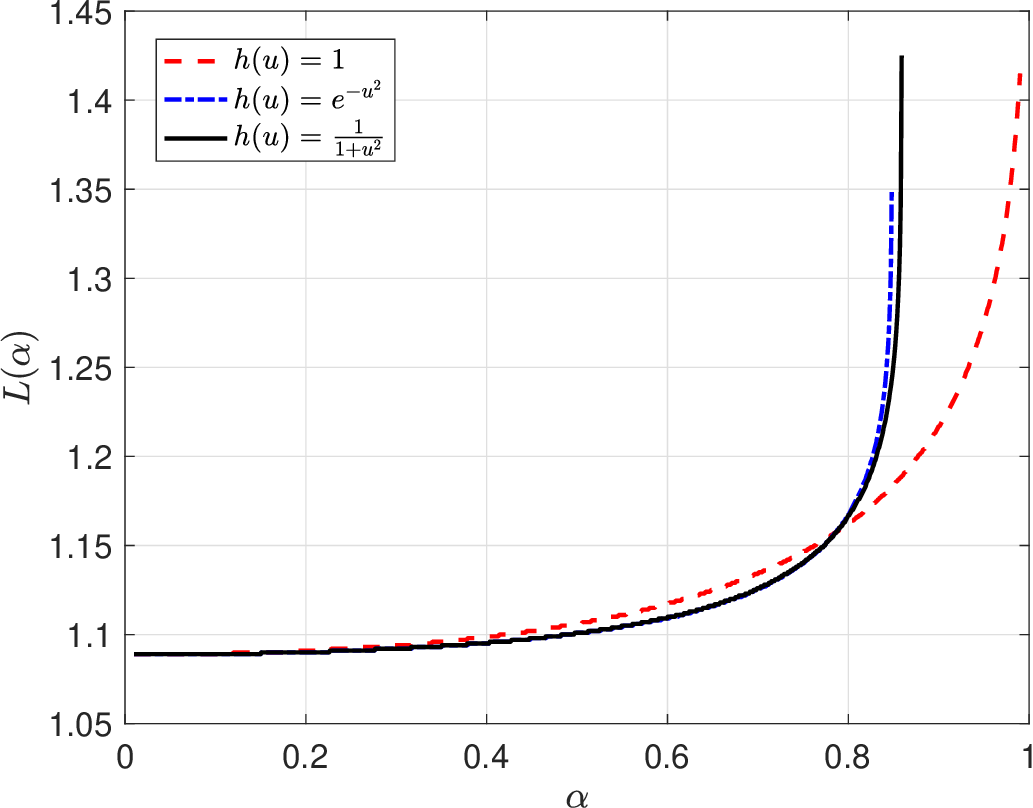}
\caption{Behavior of $L(\alpha)$ with three different diffusions and $\varepsilon = 0.06$ (left) and $\varepsilon =0.12$ (right).}
\label{lal}
\end{figure}

{As one can see from Figure \ref{lal} the function $L(\alpha)$ is strictly increasing. 
The computation of $L(\alpha)$ in the case of a nonlinear diffusion and for $\e=0.06$ (left figure) has been stopped for $\alpha\approx0.9$; 
in this case, $h''(0)=-2$ (both for the Mullins and the Gaussian) and $f''(0)=1$, and \eqref{baralfa} gives $\bar\alpha=0.90230211$.
On the right figure we choose $\e=0.12$, so that $\bar \alpha$ is smaller; if we compute \eqref{baralfa}, in this case we obtain $\bar \alpha = 0,8\bar 3$, as shown in figure \ref{lal}.
We finally stress that $L(\alpha) \to +\infty$ as $\alpha \to \bar \alpha$.  
The numerical simulations have been carried out using the Matlab function {\tt ode23tb}.}

%
Motivated by the numerical simulations, we are confident that the monotonic property of the function $L$ is preserved also in the case of a nonlinear diffusion as the one considered in \eqref{segnodih}, even if we are not able to prove it rigorously at the present time. Hence, from now on, we will assume that the positive solution to \eqref{stationaryprob} is unique.
\vskip0.5cm
In a completely analogous way, we can prove that there exists a  unique negative solution to problem \eqref{stationaryprob}.
\begin{prop}\label{esistelau-}
For any $0<\e< \gamma \, (\ell/ \pi)^{2}$, with $\gamma$ being as in Proposition \ref{propesistenza}, 
problem \eqref{stationaryprob} admits a negative solution $u_{-,\varepsilon}$ such that 
\begin{equation*}
     u_{-,\varepsilon}(x) \geq x-\ell, \quad u_{-,\varepsilon}'(x)\leq 1 \qquad \mbox{and} \qquad   u_{-,\varepsilon}''(x)\geq 0,
\end{equation*}
for all $x \in (0,\ell).$
\end{prop}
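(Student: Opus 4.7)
The plan is to argue as in the proof of Proposition \ref{positivesolut}, with sub- and super-solutions chosen so that the resulting steady state is negative. I take
\[
\underline u(x):=x-\ell, \qquad \overline u(x):=-\alpha\sin\!\left(\tfrac{\pi}{\ell}x\right),
\]
for a small parameter $\alpha\in(0,\ell/\pi]$ to be determined, both of which are non-positive on $[0,\ell]$. The goal is to show that $\underline u$ is a sub-solution, $\overline u$ is a super-solution, and $\underline u\le \overline u$ on $[0,\ell]$. The monotone iteration scheme \cite{evans,sattinger} then produces a classical solution $u_{-,\e}$ with $\underline u\le u_{-,\e}\le \overline u$.

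For $\underline u$, a direct computation using the operator \eqref{operatoreL} yields $L\underline u=-\e\, h'(x-\ell)\le 0$, since $x-\ell\le 0$ and \eqref{segnodih} forces $h'\ge 0$ for non-positive arguments. For $\overline u$, one expands $L\overline u$ as in \eqref{Lv2<0}: the two terms $-\e h'(\overline u)(\overline u_x)^2$ and $-\e h(\overline u)\overline u_{xx}$ are both non-positive, because $\overline u\le 0$ gives $h'(\overline u)\ge 0$ and because $\overline u_{xx}=\alpha(\pi/\ell)^2\sin(\pi x/\ell)\ge 0$, while $f'(\overline u)(\overline u_x-1)\ge 0$ as a product of two non-positive factors (using $|\overline u_x|\le \alpha\pi/\ell\le 1$ together with \eqref{segnodif1}). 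Choosing $\alpha$ small enough, in a way depending on $\e,\ell$ and on uniform bounds on $|h'|,\,h,\,f'$ exactly as in Proposition \ref{positivesolut}, makes the last term dominate and produces $L\overline u\ge 0$. The ordering $\underline u\le \overline u$, i.e.\ $\alpha\sin(\pi x/\ell)\le \ell-x$, follows because $g(x):=\ell-x-\alpha\sin(\pi x/\ell)$ satisfies $g'(x)=-1-\alpha(\pi/\ell)\cos(\pi x/\ell)\le -1+\alpha\pi/\ell\le 0$ with $g(\ell)=0$.

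The bound $u_{-,\e}(x)\ge x-\ell$ is then immediate from $\underline u\le u_{-,\e}$. For convexity I mimic the argument for $u_{+,\e}$: if $u_{-,\e}$ admitted an interior local maximum at some $x_2\in(0,\ell)$, then \eqref{stationaryprob} would force
\[
\e\, h(u_{-,\e}(x_2))\,u_{-,\e}''(x_2)=-f'(u_{-,\e}(x_2))>0
\]
since $u_{-,\e}(x_2)<0$, contradicting $u_{-,\e}''(x_2)\le 0$ at a maximum. Hence $u_{-,\e}$ has no interior local maximum, $u_{-,\e}''\ge 0$ on $(0,\ell)$, and $u_{-,\e}'$ is non-decreasing. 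Finally, since $u_{-,\e}(x)-(x-\ell)\ge 0$ on $[0,\ell]$ and vanishes at $x=\ell$, its left derivative there is non-positive, yielding $u_{-,\e}'(\ell)\le 1$, and hence $u_{-,\e}'(x)\le 1$ on $(0,\ell)$ by monotonicity.

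The main obstacle, as in Proposition \ref{positivesolut}, is the quantitative choice of $\alpha$ in the super-solution step: the dominating negative term in $L\overline u$ and the sign-saving positive term $f'(\overline u)(\overline u_x-1)$ are both of order $\alpha$ near the boundary, so one must carefully balance them using \eqref{segnodif2} together with the smallness assumption $\e<\gamma(\ell/\pi)^2$.
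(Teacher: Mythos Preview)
Your proposal is correct and follows essentially the same approach as the paper: the same sub-solution $x-\ell$ and super-solution $-\alpha\sin(\pi x/\ell)$, the same contradiction argument for convexity, and the same use of $u_{-,\e}'(\ell)\le 1$ together with the monotonicity of $u_{-,\e}'$ to obtain the derivative bound. The only minor point is that saying ``choosing $\alpha$ small enough makes the last term dominate'' is slightly misleading, since the second diffusion term and the reaction term are both of order $\alpha$; as you correctly note at the end, what actually makes the balance work is the smallness assumption $\e<\gamma(\ell/\pi)^2$, and the paper handles this at the same level of detail.
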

\begin{proof}
We give only a hint of the proof, since one can proceed exactly as in the proof of Proposition \ref{positivesolut}. In particular, it is sufficient to consider $v_1(x)=x-\ell$ as a sub-solution (indeed, in light of assumption \eqref{segnodih}, $h'(x-\ell)\geq0$ since $x\leq\ell$), and $v_2(x)=-\alpha\sin\left(\frac{\pi}{\ell}x\right) $ as a super-solution, for some $\alpha>0$ satisfying \eqref{sceltadialpha}. If we  now compute  $L v_2$, with $L$ defined in \eqref{operatoreL}, we have 
\begin{equation*}
\begin{split}
Lv_2=& -\e\alpha^2\left(\frac{\pi}{\ell}\right)^2h'(v_2)\cos^2\left(\frac{\pi}{\ell}x\right)-\e\alpha\left(\frac{\pi}{\ell}\right) h(v_2) \sin\left(\frac{\pi}{\ell}x\right)\\
& - f'(v_2) \left[\alpha\frac{\pi}{\ell}\cos\left(\frac{\pi}{\ell}x\right)+1\right] \\
\geq & -\e\alpha^2\left(\frac{\pi}{\ell}\right)^2h'(v_2)-\e\alpha\left(\frac{\pi}{\ell}\right) h(v_2)- f'(v_2) \left[-\alpha\frac{\pi}{\ell}+1\right]. \\
\end{split}
\end{equation*}
We thus take advantage of the fact that $f'(v_2)<0$ by \eqref{segnodif1} and we use \eqref{sceltadialpha} to prove that $Lv_2\geq 0$ for some suitable $\alpha>0$. As for the property $u''_{-,\e}(x)\geq 0$, it is enough to argue as in the proof of Proposition \ref{positivesolut} by showing that $u'_{-,\e}$ does not have a local maximum; from this, exploiting that $u'_{-,\e}(\ell)\leq 1$, one can additionally deduce that $u'_{-,\e}(x)\leq 1$.
\end{proof}

\begin{remark}\label{rem1}
It is worth observing that if in addition $h$ is symmetric, i.e.  $h(u)=h(-u)$,
then by making use of the symmetries of the problem
one can easily find that 
\begin{equation}\label{u-}
   u_{-,\varepsilon}(x)  = -u_{+,\varepsilon}(\ell-x). 
\end{equation}
Indeed, the right-hand side in \eqref{u-} is trivially negative and $L(-u_{+,\e}(\ell-x))=0$ by \eqref{segnodif} and since $h'(-u)=-h'(u)$ (a direct consequence of the symmetry of $h$).
At last, by \eqref{propr-sol-pos} and direct computations, one can easily deduce that 
$$-u_{+,\varepsilon}(\ell-x)\geq x-\ell, \quad [-u_{+,\varepsilon}(\ell-\cdot)]'(x)\leq 1,\quad [-u_{+,\varepsilon}(\ell-\cdot)]''(x)\geq 0,$$
for all $x \in (0,\ell)$.
\end{remark}

For future uses, we prove the following results on the monotonic behaviour of $u_{\pm,\e}$ with respect to the length of the interval in which it is defined. For this purpose, given any  interval $(a,b) \subset (0,\ell)$ (with possibly either $a=0$ or $b=\ell$), we denote by $u_{\pm, \e} (x;a,b)$ the unique positive (negative respectively) solution to
\begin{equation} \label{probsuaeb}
\begin{cases} \varepsilon(h(u)u_x)_x =f(u)_x - f'(u) ,  &\text{$x\in (a,b)$}, \\
 u(a)=u(b)=0.
\end{cases}	
\end{equation}
We stress that solutions to \eqref{probsuaeb} can be derived starting from the solutions $u_{+,\e}$ and $u_{-,\e}$ of Proposition \ref{positivesolut} and Proposition \ref{esistelau-} respectively, by applying a suitable rescaling.
\begin{prop}\label{monotonic}
The positive solution $u_{+,\e}(x;0,b)$ and the negative one $u_{-,\e}(x;b,\ell)$ of problem \eqref{probsuaeb} are both monotonically increasing with respect to $b\in (0,\ell)$. 
\end{prop}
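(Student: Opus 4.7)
I would prove both monotonicity claims by the sub-/super-solution method combined with the uniqueness of positive and negative solutions of \eqref{probsuaeb} that is assumed throughout this section. The key observation is that each solution can itself serve as a one-sided barrier for the corresponding problem on a smaller interval.

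\emph{Positive case.} Fix $0<b_1<b_2\leq\ell$ and set $u_i:=u_{+,\e}(\,\cdot\,;0,b_i)$, $i=1,2$. Since $u_2$ solves the PDE on $(0,b_2)$ and is strictly positive there, the restriction $u_2|_{[0,b_1]}$ is a classical super-solution of \eqref{probsuaeb} on $(0,b_1)$ in the sense of Definition~\ref{def_subsuper}: it satisfies the equation and has $u_2(0)=0$, $u_2(b_1)>0$. Mimicking the argument in the proof of Proposition~\ref{positivesolut}, the function $\underline{u}(x)=\alpha\sin(\pi x/b_1)$ is a classical sub-solution on $(0,b_1)$ for every sufficiently small $\alpha>0$; shrinking $\alpha$ further (exploiting that $u_2>0$ on $(0,b_1]$ together with $u_2'(0)>0$) one also achieves $\underline{u}\leq u_2$ on $[0,b_1]$. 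The standard monotone iteration scheme then produces a classical solution $\tilde u$ of \eqref{probsuaeb} on $(0,b_1)$ with $\underline{u}\leq\tilde u\leq u_2|_{[0,b_1]}$ and, in particular, $\tilde u>0$ inside. By the assumed uniqueness of the positive solution on $(0,b_1)$, $\tilde u=u_1$, whence $u_1\leq u_2$ on $[0,b_1]$.

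\emph{Negative case.} The argument is entirely symmetric. Fix $0\leq b_1<b_2<\ell$ and set $u_i:=u_{-,\e}(\,\cdot\,;b_i,\ell)$. The restriction $u_1|_{[b_2,\ell]}$ is now a classical sub-solution of \eqref{probsuaeb} on $(b_2,\ell)$: it solves the equation and satisfies $u_1(b_2)<0$ and $u_1(\ell)=0$. As super-solution I would take $\overline{u}(x)=-\alpha\sin\bigl(\pi(x-b_2)/(\ell-b_2)\bigr)$ with $\alpha>0$ small enough, exactly as in the proof of Proposition~\ref{esistelau-}; shrinking $\alpha$ even more forces $u_1\leq\overline{u}$ on $[b_2,\ell]$, using the strict interior negativity of $u_1$ on $(b_1,\ell)\supset(b_2,\ell)$ and the fact that $u_1'(\ell)>0$. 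Monotone iteration yields a negative classical solution lying between $u_1|_{[b_2,\ell]}$ and $\overline{u}$, which by uniqueness must coincide with $u_2$; hence $u_1\leq u_2$ on $[b_2,\ell]$.

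\emph{Main obstacle.} The genuinely delicate step is arranging the pointwise ordering $\underline{u}\leq\overline{u}$ required to run the monotone iteration, since the barrier produced by restricting the ``larger'' solution vanishes only at one of the two endpoints of the subinterval, whereas the trigonometric barrier vanishes at both. This is handled by taking the amplitude $\alpha$ small enough, using both the interior strict sign of the restricted barrier and its non-vanishing derivative at the common endpoint. Once this pointwise comparison is available, the remaining ingredients (classical sub-/super-solution iteration and the assumed uniqueness of positive/negative steady states) are entirely standard.
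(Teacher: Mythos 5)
Your argument is correct, but it runs in the opposite direction from the paper's. You restrict the solution on the \emph{larger} interval to the smaller one and use it as a classical one-sided barrier there (a super-solution in the positive case, a sub-solution in the negative case), then fit a small sine barrier on the other side and invoke monotone iteration plus uniqueness on the \emph{smaller} interval. The paper instead extends the solution on the smaller interval by zero to the larger one; this extension has a kink at $x=b$ with the correct sign of the jump in the derivative, so it is a \emph{weak} sub-solution (respectively super-solution) in the sense of Definition~\ref{def_subsuper}, and it is then compared on the larger interval with the already available barrier $v_1(x)=x$ (respectively $x-\ell$) and the unique positive (negative) solution there. The trade-off is clear: the paper's route needs the weak formulation to handle the non-$C^1$ extension but gets the ordered barrier pair for free, whereas your route stays entirely within classical sub-/super-solutions at the price of the extra step of fitting $\alpha\sin$ underneath (or above) the restricted solution --- a step you correctly identify as the delicate point and resolve by shrinking $\alpha$ using the interior strict sign and the nonvanishing derivative at the common endpoint (in effect, a Hopf-type boundary point estimate). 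Both arguments rely on the same assumed uniqueness of the signed steady state on each subinterval, so neither is more economical in hypotheses; your version is self-contained at the level of classical solutions, while the paper's version is the one that later recycles the zero-extension construction in Proposition~\ref{preliminary1}.
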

\begin{proof}
The function defined as
\begin{equation*}
\bar{u}(x) := \left\{ \begin{aligned} &u_{+,\e}(x;0,b), \quad &\mbox{for} \quad x \in [0,b], \\ &0,\quad \, &\mbox{for} \quad x \in [b,\ell], \\\end{aligned}\right.
\end{equation*}
turns out to be a weak sub-solution of problem \eqref{stationaryprob}: indeed, for any test function $\varphi\geq 0$ in $(0,\ell)$ such that $\varphi(0)=\varphi(\ell)=0$ it holds that 
\begin{equation*}
\int_0^\ell \{\e h(\bar{u})\bar{u}_x\varphi_x-f(\bar{u})\varphi_x-f'(\bar{u})\varphi\} dx= \varepsilon h(\bar{u}) \bar{u}'(b^-) \varphi(b) \leq 0. 
\end{equation*} 
Moreover it is easy to see that $\bar{u}(x)\leq x$. Since $x$ is a super-solution of problem \eqref{stationaryprob}, as shown in the proof of Proposition \ref{positivesolut}, we have that $\bar{u}(x)\leq u_{+,\e} (x)$ by uniqueness of the positive steady state $u_{+,\e}$; hence
\begin{equation*}
u_{+,\e}(x;0,b)\leq u_{+,\e}(x;0,\ell)\quad \text{for every $x\in (0,b)$}. 
\end{equation*}
This can be generalized to any $\bar{b}>b$ in place of $\ell$. The same monotonic property can be proved for the negative solution $u_{-,\e}(x;b,\ell)$ by reasoning as above: it is sufficient to consider the following super-solution 
\begin{equation*}
\bar{\bar{u}}(x) := \left\{ \begin{aligned} &0 \quad &\mbox{for} \quad x \in [0,b], \\ &u_{-,\e}(x;b,\ell)\quad \, &\mbox{for} \quad x \in [b,\ell], \\\end{aligned}\right.
\end{equation*}
which satisfies $\bar{\bar{u}}(x) \geq x-\ell$, since now $x-\ell$ is a sub-solution. 
\end{proof}
As a byproduct we obtain the following result  that is crucial to prove Proposition \ref{onezerosol}.
\begin{corollary}\label{risolutivo}
If $0<b-a<c-b$, then
\begin{equation*}\label{disvoluta}
\frac{d}{dx} u_{-,\e}(x;a,b)\biggl|_{x=b}< \frac{d}{dx} u_{+,\e}(x;b,c)\biggl|_{x=b}. 
\end{equation*}
\end{corollary}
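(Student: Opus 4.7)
The plan is to mimic the sub-solution argument of Proposition \ref{monotonic}. I would first define the piecewise function $U\colon[a,c]\to\mathbb{R}$ by setting $U=u_{-,\e}(\cdot;a,b)$ on $[a,b]$ and $U=u_{+,\e}(\cdot;b,c)$ on $[b,c]$. This is a continuous function vanishing at $a$, $b$, $c$, solving \eqref{probsuaeb} classically on each subinterval and possibly exhibiting a corner at $b$.

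For any non-negative test function $\varphi\in C^1([a,c])$ with $\varphi(a)=\varphi(c)=0$, I would integrate by parts on the subintervals $(a,b)$ and $(b,c)$ separately. Since $u_{\pm,\e}$ are classical solutions there, the bulk contributions cancel exactly as in the proof of Proposition \ref{monotonic}, and only the interior boundary term at $b$ survives, yielding
\begin{equation*}
\int_a^c \bigl\{\e h(U) U_x \varphi_x + [f'(U) U_x - f'(U)]\varphi\bigr\}\, dx = \e\, h(0)\bigl[U'(b^-) - U'(b^+)\bigr]\varphi(b).
\end{equation*}
Hence proving the strict inequality $U'(b^-) < U'(b^+)$ reduces to showing that $U$ is a strict weak sub-solution of \eqref{probsuaeb} on $(a,c)$ in the sense of Definition \ref{def_subsuper}.

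To conclude I would invoke the monotonicity of Proposition \ref{monotonic}. By translation invariance, assume $b=0$, so the quantities to compare are $\beta_-(L_1):=u_{-,\e}'(0^-;-L_1,0)$ and $\beta_+(L_2):=u_{+,\e}'(0^+;0,L_2)$ with $L_1:=b-a<c-b=:L_2$. The map $L\mapsto\beta_+(L)$ is \emph{strictly} increasing: by Proposition \ref{monotonic} the values of $u_{+,\e}(\cdot;0,L)$ grow pointwise with $L$, and uniqueness for the associated initial value problem rules out equal left-derivatives for two distinct values of $L$. It therefore suffices to establish the same-length bound $\beta_-(L_1)\leq\beta_+(L_1)$. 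For this, I would consider the reflection $w(x):=-u_{-,\e}(-x;-L_1,0)$ on $[0,L_1]$, which is non-negative, vanishes at $0$ and $L_1$, and satisfies $w'(0)=\beta_-(L_1)$; a direct calculation shows that $w$ solves an equation of the form \eqref{probsuaeb} with $h(\cdot)$ replaced by $\tilde h(\cdot):=h(-\cdot)$. Since $\tilde h$ also satisfies \eqref{segnodih}, a comparison with $u_{+,\e}(\cdot;0,L_1)$ (using the sub/super-solutions from Proposition \ref{positivesolut}) yields $w\leq u_{+,\e}(\cdot;0,L_1)$, and differentiating at the common zero $x=0$ gives $\beta_-(L_1)\leq\beta_+(L_1)$, with strictness propagated from $L_1<L_2$.

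The \textbf{main obstacle} is the last step: $w$ and $u_{+,\e}(\cdot;0,L_1)$ are positive concave solutions of structurally similar but distinct stationary equations (differing only in whether the diffusion is $h(u)$ or $h(-u)$), so the pointwise comparison between them is not immediate. One must exploit the sign condition $h'(u)u\leq0$ (which, together with continuity of $h'$, forces $h'(0)=0$) and the concavity of $w$ to control the sign of $\bigl[(h-\tilde h)(w)w'\bigr]_x$; equivalently, one has to verify that extending $w$ by zero on $[L_1,L_2]$ yields a genuine weak sub-solution of the $h$-problem on $(0,L_2)$ despite the diffusion mismatch.
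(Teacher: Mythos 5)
Your strategy is essentially the paper's: Proposition \ref{monotonic} yields the strict monotonicity of the endpoint derivative of the positive solution with respect to the length of its interval, and the comparison between the negative branch on $(a,b)$ and the positive branch on $(b,c)$ is reduced, via a reflection about $x=b$, to a comparison of two positive profiles on intervals of lengths $b-a<c-b$. The opening computation with the glued function $U$ is correct but does not advance the argument: it merely restates the desired inequality as the assertion that $U$ is a weak sub-solution, which is precisely what this corollary is later invoked to prove in Proposition \ref{preliminary1}. The strictness of $L\mapsto\beta_+(L)$ via pointwise monotonicity plus uniqueness for the Cauchy problem \eqref{cauchyprob} is fine.

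The proposal is nevertheless incomplete at exactly the point you flag yourself, namely the equal-length estimate $\beta_-(L_1)\leq\beta_+(L_1)$. As you correctly observe, the reflection $w(x)=-u_{-,\e}(-x;-L_1,0)$ solves the stationary equation with diffusion $\tilde h(s)=h(-s)$ rather than $h$, and hypothesis \eqref{segnodih} does not force $h$ to be even; hence $w$ and $u_{+,\e}(\cdot;0,L_1)$ solve genuinely different equations, no comparison principle applies off the shelf, and your final paragraph is a list of things one would still have to verify rather than a proof. For fairness: the paper's own one-line proof has the same soft spot, since it asserts $-u_{-,\e}(b+a-x;b,2b-a)<u_{+,\e}(x;b,c)$ without justification, and this is immediate only when $h$ is symmetric, in which case Remark \ref{rem1} gives $u_{-,\e}(x;a,b)=-u_{+,\e}(a+b-x;a,b)$ exactly, the reflected function is itself the positive solution on an interval of length $b-a$, and \eqref{corol1} finishes the argument (your intermediate inequality becoming an equality). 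So the skeleton of your proof is the right one and matches the paper, but the single step that requires a genuinely new idea for non-symmetric $h$ is left open in your write-up, as it is, tacitly, in the paper.
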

\begin{proof}
From Proposition \ref{monotonic} we can immediately infer that if $b_1<b_2$, then
\begin{equation}\label{corol1}
\frac{d}{dx} u_{+,\e}(x;0,b_1)\biggl|_{x=0} < \frac{d}{dx} u_{+,\e}(x;0,b_2)\biggl|_{x=0}   
\end{equation}
and 
\begin{equation}\label{corol2}
\frac{d}{dx} u_{-,\e}(x;b_1,\ell)\biggl|_{x=\ell}> \frac{d}{dx} u_{-,\e}(x;b_2,\ell)\biggl|_{x=\ell}.
\end{equation}
Thus, after observing that 
\begin{equation*}
-u_{-,\e}(b+a-x;b,2b-a)<u_{+,\e}(x;b,c),
\end{equation*}
the thesis follows from \eqref{corol1} and \eqref{corol2}, since $2b-a<c$.
\end{proof}

We now turn our attention to the existence of stationary solutions with exactly one zero inside the interval $(0,\ell)$. The following proposition shows that solutions of this type can be constructed starting from the positive and negative ones that we investigate above in Proposition \ref{monotonic} and Corollary \ref{risolutivo}; as anticipated, their monotonic properties  will be determinant in order to obtain classical $C^2$ solutions. 
\begin{prop}\label{onezerosol}
For any $0<\e< \gamma \, (\ell/ 2\pi)^{2}$, with $\gamma$ being as in Proposition \ref{propesistenza}, problem \eqref{stationaryprob} admits a unique solution, named here $u_{1^-,\e}$, such that $u_{1^-,\e}(\ell/2)=0$ and
\begin{equation*}
u_{1^-,\e}(x) <0 \quad \mbox{for} \quad x < {\ell}/{2} \qquad \mbox{and} \qquad u_{1^-,\e}(x) >0 \quad \mbox{for} \quad x > {\ell}/{2}.
\end{equation*}
Moreover $ u'_{1^-,\e}(x)  \leq 1$ for all $x \in (0,\ell)$ and
\begin{equation*}
 u''_{1^-,\e}(x) >0 \quad \mbox{for} \quad x < {\ell}/{2} \qquad \mbox{and} \qquad  u''_{1^-,\e}(x) <0 \quad \mbox{for} \quad x > {\ell}/{2}.
\end{equation*}
Similarly, problem \eqref{stationaryprob} admits a unique solution $u_{1^+,\e}$ such that $u_{1^+,\e}(\ell/2)=0$ and
\begin{equation*}
u_{1^+,\e}(x) >0 \quad \mbox{for} \quad x < {\ell}/{2} \qquad \mbox{and} \qquad u_{1^+,\e}(x) <0 \quad \mbox{for} \quad x > {\ell}/{2}.
\end{equation*}
Moreover $ u'_{1^+,\e}(x)  \leq 1$ for all $x \in (0,\ell)$ and
\begin{equation*}
 u''_{1^+,\e}(x) <0 \quad \mbox{for} \quad x < {\ell}/{2} \qquad \mbox{and} \qquad  u''_{1^+,\e}(x) >0 \quad \mbox{for} \quad x > {\ell}/{2}.
\end{equation*}
Additionally
$$x-\ell/2 < u_{1^-,\e}(x) < 0 \quad \mbox{for} \quad x <\ell/2 \qquad \mbox{and} \qquad 0 < u_{1^-,\e}(x) < x-\ell/2 \quad \mbox{for} \quad x >l/2,$$
while
$$0 < u_{1^+,\e}(x) < x \quad \mbox{for} \quad x <\ell/2 \qquad \mbox{and} \qquad x-\ell < u_{1^+,\e}(x) < 0 \quad \mbox{for} \quad x >\ell/2.$$
\end{prop}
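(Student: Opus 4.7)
The plan is to construct $u_{1^-,\e}$ by gluing the negative steady state on the interval $(0,\ell/2)$ with the positive steady state on $(\ell/2,\ell)$, and then to verify that the matching yields a classical $C^2$ solution precisely when the zero is placed at $\ell/2$. Concretely, for each candidate zero $x_0\in(0,\ell)$, I would define
\begin{equation*}
\bar u_{x_0}(x):=\begin{cases} u_{-,\e}(x;0,x_0), & x\in[0,x_0],\\ u_{+,\e}(x;x_0,\ell), & x\in[x_0,\ell],\end{cases}
\end{equation*}
which is continuous, vanishes at $0$, $x_0$ and $\ell$, and is a classical solution of the ODE on each of the two open subintervals by the existence theory developed in Propositions \ref{positivesolut} and \ref{esistelau-} applied to the subintervals (existence being guaranteed on intervals of length $\ell/2$ by the hypothesis $\e<\gamma(\ell/2\pi)^2$). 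The task reduces to singling out the unique $x_0$ for which $\bar u_{x_0}$ is of class $C^2$ at the glueing point.

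To this end, I would introduce the jump-of-derivative function
\begin{equation*}
\phi(x_0):=\frac{d}{dx}u_{+,\e}(x;x_0,\ell)\Big|_{x=x_0}-\frac{d}{dx}u_{-,\e}(x;0,x_0)\Big|_{x=x_0},
\end{equation*}
which depends continuously on $x_0$ by the standard continuous dependence of solutions of \eqref{probsuaeb} on the endpoints. Corollary \ref{risolutivo} applied with $a=0,\,b=x_0,\,c=\ell$ gives $\phi(x_0)>0$ whenever $x_0<\ell/2$, and the same chain of reasoning carried out with the roles of the two subintervals reversed yields $\phi(x_0)<0$ whenever $x_0>\ell/2$. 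Continuity then forces $\phi(\ell/2)=0$, i.e.\ $\bar u_{\ell/2}$ is of class $C^1$ across the matching point. The upgrade to $C^2$ is automatic: at $x=\ell/2$ one has $u=0$, so $f'(u)=0$ and the equation reduces to the algebraic identity
\begin{equation*}
\e\, h(0)\, u''(\ell/2)=-\e\, h'(0)\,[u'(\ell/2)]^2,
\end{equation*}
which determines $u''(\ell/2)$ uniquely from $u'(\ell/2)$. Hence both branches of $\bar u_{\ell/2}$ match also in the second derivative, and $u_{1^-,\e}:=\bar u_{\ell/2}$ is a classical solution of \eqref{stationaryprob}.

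Uniqueness among solutions vanishing at $\ell/2$ is immediate from the uniqueness of $u_{\pm,\e}$ on the subintervals. The sign, convexity, monotonicity and size properties listed in the statement all descend from the corresponding properties in Propositions \ref{positivesolut} and \ref{esistelau-} applied on the subintervals of length $\ell/2$: on $(0,\ell/2)$ the function $u_{1^-,\e}$ coincides with $u_{-,\e}(\cdot;0,\ell/2)$ and is therefore negative, convex, with $u'\leq 1$ and $u\geq x-\ell/2$; on $(\ell/2,\ell)$ it coincides with $u_{+,\e}(\cdot;\ell/2,\ell)$ and is positive, concave, with $u'\leq 1$ and $u\leq x-\ell/2$. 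The strict inequalities $x-\ell/2<u_{1^-,\e}(x)<0$ and $0<u_{1^-,\e}(x)<x-\ell/2$ come from the strong maximum principle applied to the difference between $u_{1^-,\e}$ and the corresponding affine barrier, which also solves the limiting linear ODE at the vanishing level. The construction of $u_{1^+,\e}$ is entirely symmetric: glue $u_{+,\e}(\cdot;0,\ell/2)$ on the left with $u_{-,\e}(\cdot;\ell/2,\ell)$ on the right and analyze the associated jump function in the same way.

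The main obstacle I foresee is verifying the sign $\phi(x_0)<0$ for $x_0>\ell/2$. Corollary \ref{risolutivo} is stated only in the asymmetric case $b-a<c-b$, so one must rerun its proof, based on Proposition \ref{monotonic}, in the mirrored configuration in which the larger subinterval sits on the left. This is essentially a bookkeeping matter because the monotonicity asserted in Proposition \ref{monotonic} is perfectly symmetric between the positive and negative sub-problems, but it is the one step that cannot be obtained by a direct citation of an already-proved statement.
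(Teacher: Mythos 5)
Your proof follows essentially the same route as the paper's: glue $u_{-,\e}(\cdot\,;0,x_0)$ with $u_{+,\e}(\cdot\,;x_0,\ell)$, use Corollary \ref{risolutivo} to force $x_0=\ell/2$, and obtain the $C^2$ matching from $f'(0)=h'(0)=0$, with all remaining sign, convexity and bound properties inherited from Propositions \ref{positivesolut} and \ref{esistelau-} on the half-intervals. Your explicit continuity argument for the jump function $\phi$, together with the observation that the case $x_0>\ell/2$ requires rerunning the proof of Corollary \ref{risolutivo} in the mirrored configuration, is a welcome sharpening of the ``if and only if'' that the paper asserts without detail.
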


\begin{proof}
We start by proving the existence of $u_{1^-,\e}$. For this,
let $x_0 \in (0,\ell)$ and let us consider problem \eqref{probsuaeb} with the choice $a=x_0$ and $b=\ell$. 
As remarked above, there exists a positive solution to such a problem that we denote by $u_{+,\varepsilon}(x;x_0,\ell)$: to convince the reader, we repeat the same arguments of the proof of Proposition \ref{positivesolut} with $v_1(x)=x-x_0$ as super-solution and 
\begin{equation}\label{v2pos}
v_2(x)=\alpha \sin \left[\frac{\pi}{\ell-x_0}(x-x_0) \right]
\end{equation}
as sub-solution, provided that $\alpha\pi/(\ell-x_0)\leq 1$. Analogously, proceeding as in the proof of Proposition \ref{esistelau-}, if we choose $v_1(x)=x-x_0$ as sub-solution and
\begin{equation}\label{v2neg}
v_2(x)=-\alpha\sin\left[\frac{\pi}{\ell-x_0}(x-x_0) \right]
\end{equation}
as super-solution, we can prove the existence of a negative solution $u_{-,\varepsilon}(x;0,x_0)$ to \eqref{probsuaeb} in the interval $(0,x_0)$. Hence the function defined in the whole interval $(0,\ell)$ as follows
\begin{equation}\label{uM}
    u_{1^-,\e}(x) := \left\{ \begin{aligned} &u_{+,\varepsilon}(x;x_0,\ell) \quad \mbox{for} \quad x\geq x_0, \\ &u_{-,\e}(x;0,x_0)\quad \mbox{for} \quad x<x_0, \end{aligned}\right.
\end{equation}
solves the stationary problem \eqref{stationaryprob}.
In particular it must be $x_0=\ell/2$ in order for \eqref{uM} to be a classical $C^2$ solution; indeed, the first order derivative is continuous at $x=\ell/2$ in view of Corollary \ref{risolutivo} if and only if $x_0-0=\ell-x_0$, while the second order derivative is zero since $h'(0)=0$. Moreover, since Proposition \ref{propesistenza} has to hold in $(0, \ell/2)$, we need $\e < \alpha (\ell/2\pi)^2$ as required.
Going further, $u'_{1^-,\e}(x)  \leq 1$ for all $x \in (0,\ell)$ because so it is for $u_{+,\e}(x;\ell/2,\ell)$ and $u_{-,\e}(x;0,\ell/2)$ (see again the proofs of Propositions \ref{positivesolut} and \ref{esistelau-}); the convexity properties of $u_{1^-,\e}$ directly follow from the ones of $u_{\pm,\varepsilon}$ as well. Finally, since
\begin{equation} \label{x-ell2}
    x-\ell/2 < u_{-,\e}(x;0,\ell/2) < 0 \qquad \mbox{and} \qquad 0 < u_{+,\e}(x;\ell/2,\ell) < x-\ell/2,
\end{equation}
all the properties regarding the solution $u_{1^-,\e}$ are proved.

Existence and properties of the steady state $u_{1^+,\e}$ can be proven in a completely similar way by means of existence of a positive solution in the interval $(0,x_0)$ and a negative one in the interval $(x_0,\ell)$. In particular, one can reason exactly as in the proofs of Propositions \ref{positivesolut} and \ref{esistelau-} by choosing $v_1(x)=x$ as super-solution and \eqref{v2pos} as sub-solution in the interval $(0,x_0)$, $v_1(x)=x-\ell$ as sub-solution and \eqref{v2neg} as super-solution in the interval $(x_0,\ell)$. Again, one has that $x_0=\ell/2$ in order for $u_{1^+,\e}$ to solve the stationary problem in a classical sense. 
\end{proof}

\begin{remark}\label{morezeros}
As already observed in Remark \ref{rem1}, if in addition $h$ is symmetric one can find out that
$$u_{1^+,\e}(x)= -u_{1^-,\e}(\ell-x), $$
and thus deducing the properties of $u_{1^+,\e}$ directly from the ones of $u_{1^-,\e}$. 

The steady states described in Propositions \ref{positivesolut}, \ref{esistelau-}  and \ref{onezerosol} are depicted in Figure \ref{figSTAZ}.
\begin{figure}[tbp]
\begin{center}
\includegraphics[scale=0.4]{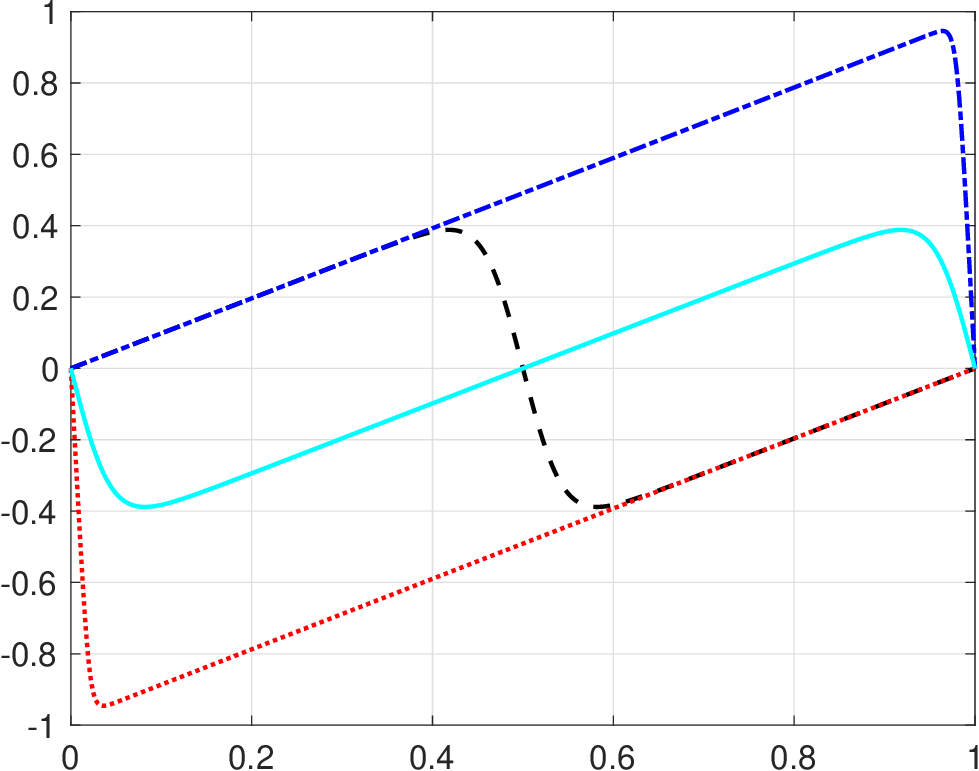}
\caption{\small{Plot of the  four stationary states, solutions to \eqref{stationaryprob} with $\varepsilon=0.01$, $h(u)= e^{-u^2}$ and $f(u)=u^2/2$.}}
\label{figSTAZ}
\end{center}
\end{figure}

For the sake of completeness, we also recall that, by proceeding as in the proof of Proposition \ref{onezerosol}, one can prove the existence of  solutions to \eqref{stationaryprob} with an arbitrary (but finite) number $N$ of zeroes inside the interval $(0,\ell)$. The idea   is  to ``glue" together a finite number of positive and negative steady states, defined in sub-intervals  $(\alpha_i, \beta_i) \subset (0,\ell)$ with $i=0,\dots,N$ such that $\alpha_0=0$, $\beta_{N}=\ell$, $\alpha_{i+1}=\beta_i$ for all $i=0, \dots , N-1$,
$$\bigcup_{i=0}^{N} \, (\alpha_i, \beta_i) = (0,\ell) \quad \mbox{and} \quad  \bigcap _{i=0}^{N} \,  (\alpha_i, \beta_i) = \emptyset. $$
One will obtain stationary solutions changing sign and convexity exactly {$N$ times} inside the interval $(0,\ell)$, having exactly $N$ internal zeros located at $\alpha_{i+1}=\beta_i$, for all $i=0, \dots \, N-1$.  We remark that in this case we need to ask $\e < \gamma(\ell/2^N \pi)^2$. 
\end{remark}

\section{Behaviour of the steady states}\label{sec3}
\subsection{Properties of the steady states with respect to $\e$}\label{epscomportamento}
In this subsection, we prove a convergence-type result as $\varepsilon\to 0^+$ for the positive and negative steady states $u_{\pm,\varepsilon}$ and, consequently, for the  steady states $u_{1^-,\e}$ and $u_{1^+,\e}$.
To this aim, we premise the following lemma in which a decreasing monotonic behaviour with respect to the parameter $\varepsilon$ is shown to hold. 

\begin{lemma}\label{lemmadecrescenza}
Let $u_{+,\varepsilon}$ be the unique positive solution to problem \eqref{stationaryprob} and let $u_{+,\varepsilon_1}$ be the unique positive solution to problem \eqref{stationaryprob} with $\varepsilon_1$ in place of $\varepsilon$. If $\varepsilon<\varepsilon_1$, then 
\begin{equation}\label{P1}
u_{+,\varepsilon_1}(x)\leq u_{+,\varepsilon}(x)\quad \text{for all $x \in (0,\ell)$}.
\end{equation}
\end{lemma}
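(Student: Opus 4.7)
The plan is to show that $u_{+,\varepsilon_1}$ is a classical sub-solution of problem \eqref{stationaryprob} for the parameter $\varepsilon$, and then to invoke the sub/super-solution machinery already exploited in Section \ref{sec:stat} together with the assumed uniqueness of the positive steady state.

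To carry out the first step, I would start from the equation satisfied by $u_{+,\varepsilon_1}$, namely $\varepsilon_1 (h(u_{+,\varepsilon_1}) u_{+,\varepsilon_1,x})_x = f(u_{+,\varepsilon_1})_x - f'(u_{+,\varepsilon_1})$, and rewrite it in terms of the operator $L$ of \eqref{operatoreL} at the level $\varepsilon$, obtaining
\begin{equation*}
L u_{+,\varepsilon_1} = -\varepsilon (h(u_{+,\varepsilon_1}) u_{+,\varepsilon_1,x})_x + f(u_{+,\varepsilon_1})_x - f'(u_{+,\varepsilon_1}) = (\varepsilon_1 - \varepsilon)(h(u_{+,\varepsilon_1}) u_{+,\varepsilon_1,x})_x.
\end{equation*}
Expanding $(h(u_{+,\varepsilon_1}) u_{+,\varepsilon_1,x})_x = h'(u_{+,\varepsilon_1})(u_{+,\varepsilon_1,x})^2 + h(u_{+,\varepsilon_1}) u_{+,\varepsilon_1,xx}$ I note that both summands are non-positive: the first because $u_{+,\varepsilon_1} \geq 0$ together with \eqref{segnodih} forces $h'(u_{+,\varepsilon_1}) \leq 0$, and the second because of the concavity $u_{+,\varepsilon_1,xx} \leq 0$ recorded in \eqref{propr-sol-pos} combined with $h > 0$. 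Since $\varepsilon_1 - \varepsilon > 0$, I conclude $L u_{+,\varepsilon_1} \leq 0$ in $(0,\ell)$, and since $u_{+,\varepsilon_1}$ vanishes at the endpoints this exhibits $u_{+,\varepsilon_1}$ as a classical sub-solution of \eqref{stationaryprob} in the sense of Definition \ref{def_subsuper}.

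In the second step I would close the argument exactly as in the proof of Proposition \ref{positivesolut}: the function $v_1(x) = x$ is a super-solution of \eqref{stationaryprob}, and $u_{+,\varepsilon_1}(x) \leq x$ for all $x \in [0,\ell]$ by \eqref{propr-sol-pos}. Hence the monotone iteration scheme starting from the ordered pair $u_{+,\varepsilon_1} \leq v_1$ produces a solution $w$ of the $\varepsilon$-problem satisfying $u_{+,\varepsilon_1} \leq w \leq x$ on $[0,\ell]$; in particular $w$ is non-negative and, being bounded below by the non-trivial sub-solution $u_{+,\varepsilon_1}$, it is not identically zero. By the uniqueness of the positive solution assumed just before the statement of the present lemma, $w$ must coincide with $u_{+,\varepsilon}$, and \eqref{P1} follows.

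The main substantive point is therefore the identification of the correct sign of $(h(u_{+,\varepsilon_1}) u_{+,\varepsilon_1,x})_x$: it is crucial that both structural ingredients of our setting, the sign condition \eqref{segnodih} on $h$ and the concavity established in \eqref{propr-sol-pos}, push in the same direction. Beyond this, the only non-routine ingredient is the appeal to the uniqueness of the positive steady state, which has been explicitly assumed earlier in the paper after the inconclusive shooting analysis.
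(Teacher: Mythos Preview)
Your argument is correct and essentially identical to the paper's: both show that $u_{+,\varepsilon_1}$ is a sub-solution of the $\varepsilon$-problem by exploiting $h'(u_{+,\varepsilon_1})\le 0$, $u''_{+,\varepsilon_1}\le 0$ and $\varepsilon<\varepsilon_1$, then sandwich it below the super-solution $x$ and invoke uniqueness of the positive steady state. The only cosmetic difference is that the paper phrases the key inequality as $L_\varepsilon u_{+,\varepsilon_1}\le L_{\varepsilon_1}u_{+,\varepsilon_1}=0$, whereas you write out the difference $(\varepsilon_1-\varepsilon)(h(u_{+,\varepsilon_1})u_{+,\varepsilon_1,x})_x$ explicitly.
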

\begin{proof}
From now on we denote with $L_\e$ the operator $L$ introduced in \eqref{operatoreL} and with $L_{\e_1}$ the same operator with $\e_1$ in place of $\varepsilon$. 
Then it holds that 
$$L_\varepsilon u_{+,{\varepsilon_1}}  \leq L_{\varepsilon_1} u_{+,{\varepsilon_1}}=0,$$
since $\varepsilon<\varepsilon_1$, $h(u_{+,{\varepsilon_1}})>0$, $h'(u_{+,{\varepsilon_1}})\leq0$ by \eqref{segnodih}, $u_{+,{\varepsilon_1}}''\leq 0$ and $u_{+,{\varepsilon_1}}$ solves \eqref{stationaryprob} with $\varepsilon_1 $ in place of $\varepsilon$. Thus $u_{+,{\varepsilon_1}}$ turns out to be a sub-solution to \eqref{stationaryprob}. Hence, arguing as in the proof of Proposition \ref{positivesolut}, if we take $v_1(x)=x$ and $v_2(x)= u_{+,{\varepsilon_1}}(x)$ as super-solution and sub-solution to problem \eqref{stationaryprob} respectively, then there exists a solution $\tilde{u}_{+,\varepsilon}$ to \eqref{stationaryprob} such that $u_{+,{\varepsilon_1}}\leq \tilde{u}_{+,\varepsilon}\leq x$, whence $\tilde{u}_{+,\varepsilon}$ is positive since $u_{+,{\varepsilon_1}}$ is positive. By uniqueness of the positive solution we infer that $\tilde{u}_{+,\varepsilon}=u_{+,\varepsilon}$ and the thesis trivially follows. 
\end{proof}
Similarly to the previous lemma, we can prove that the negative solution $u_{-,\e}$ of Proposition \ref{esistelau-} exhibits an increasing monotonic behaviour with respect to $\e$.

\begin{lemma}\label{lemmacrescenza}
Let $u_{-,\varepsilon}$ be the unique negative solution to problem \eqref{stationaryprob} and let $u_{-,\varepsilon_1}$ be the unique negative solution to problem \eqref{stationaryprob} with $\varepsilon_1$ in place of $\varepsilon$. If $\varepsilon<\varepsilon_1$, then 
\begin{equation}\label{P2}
u_{-,\varepsilon}(x)\leq u_{-,\varepsilon_1}(x)\quad \text{for all $x \in (0,\ell)$}. 
\end{equation}
\end{lemma}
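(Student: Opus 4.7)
The plan is to mirror the proof of Lemma \ref{lemmadecrescenza} almost verbatim, with the relevant inequalities flipped because we are now dealing with the negative steady state. Concretely, I will first show that $u_{-,\varepsilon_1}$ is a (classical) super-solution of \eqref{stationaryprob} at the parameter $\varepsilon$, then apply the monotone iteration scheme with $v_1(x)=x-\ell$ as sub-solution and $v_2=u_{-,\varepsilon_1}$ as super-solution, and finally conclude via the uniqueness of the negative steady state.

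For the super-solution property, denote again by $L_\varepsilon$ and $L_{\varepsilon_1}$ the operator in \eqref{operatoreL} for the two parameters. Since $u_{-,\varepsilon_1}$ solves the $\varepsilon_1$-problem, $L_{\varepsilon_1}u_{-,\varepsilon_1}=0$ and
\begin{equation*}
L_\varepsilon u_{-,\varepsilon_1}-L_{\varepsilon_1}u_{-,\varepsilon_1}
=(\varepsilon_1-\varepsilon)\bigl[h'(u_{-,\varepsilon_1})(u_{-,\varepsilon_1})_x^{\,2}+h(u_{-,\varepsilon_1})(u_{-,\varepsilon_1})_{xx}\bigr].
\end{equation*}
By Proposition \ref{esistelau-} one has $(u_{-,\varepsilon_1})_{xx}\geq 0$, and since $u_{-,\varepsilon_1}\leq 0$, assumption \eqref{segnodih} yields $h'(u_{-,\varepsilon_1})\geq 0$; together with $h>0$ and $\varepsilon_1>\varepsilon$, the bracket is non-negative, hence
\begin{equation*}
L_\varepsilon u_{-,\varepsilon_1}\geq L_{\varepsilon_1}u_{-,\varepsilon_1}=0,
\end{equation*}
so that $u_{-,\varepsilon_1}$ is a super-solution to \eqref{stationaryprob} in the sense of Definition \ref{def_subsuper}.

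To close the argument, I recall from the proof of Proposition \ref{esistelau-} that $v_1(x)=x-\ell$ is a sub-solution of \eqref{stationaryprob}, and from the same proposition that $u_{-,\varepsilon_1}(x)\geq x-\ell$, so the ordering $v_1\leq v_2$ required by the monotone iteration (see \cite{evans,sattinger}) is satisfied. The scheme therefore produces a solution $\tilde u_{-,\varepsilon}$ of \eqref{stationaryprob} with
\begin{equation*}
x-\ell\leq \tilde u_{-,\varepsilon}(x)\leq u_{-,\varepsilon_1}(x)<0\qquad\text{for all }x\in(0,\ell),
\end{equation*}
so $\tilde u_{-,\varepsilon}$ is negative; invoking the uniqueness of the negative solution to \eqref{stationaryprob} (the analogue for $u_{-,\varepsilon}$ of the uniqueness assumed for $u_{+,\varepsilon}$), we conclude that $\tilde u_{-,\varepsilon}=u_{-,\varepsilon}$, which gives \eqref{P2}.

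The only non-mechanical point is the sign bookkeeping in the first computation: in the positive case of Lemma \ref{lemmadecrescenza} the signs $h'\leq 0$ and $u''\leq 0$ combine to produce a sub-solution, whereas here both signs flip simultaneously and still combine coherently, yielding a super-solution instead. Everything else is a direct transcription, and the use of the ambient assumption that the negative steady state is unique is the exact counterpart of what is done in the proof of Lemma \ref{lemmadecrescenza}.
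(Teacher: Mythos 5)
Your proof is correct and follows essentially the same route as the paper: you show $L_\varepsilon u_{-,\varepsilon_1}\geq L_{\varepsilon_1}u_{-,\varepsilon_1}=0$ using $h'(u_{-,\varepsilon_1})\geq 0$ and $(u_{-,\varepsilon_1})_{xx}\geq 0$, then run the monotone iteration between $x-\ell$ and $u_{-,\varepsilon_1}$ and invoke uniqueness of the negative steady state. The sign bookkeeping and the identity for $L_\varepsilon-L_{\varepsilon_1}$ are both accurate, so nothing is missing.
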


\begin{proof}
The proof is analogous to the one of Lemma \ref{lemmadecrescenza}, based on the fact that this time
$$L_\varepsilon u_{-,{\varepsilon_1}}  \geq L_{\varepsilon_1} u_{-,{\varepsilon_1}}=0.$$
Indeed, in this case, $h'(u_{-,\e})\geq0$ by \eqref{segnodih}, while $u_{-,{\varepsilon_1}}''\geq 0$. Hence, by choosing $u_{-,{\varepsilon_1}}$ as super-solution and $x-\ell$ as sub-solution, we end up with $x-\ell \leq u_{-,{\varepsilon}} \leq u_{-,{\varepsilon_1}}$ and thus the proof is complete.
\end{proof}

The results of Lemmas \ref{lemmadecrescenza}-\ref{lemmacrescenza} (and, in particular, properties \eqref{P1} and \eqref{P2}) are depicted in Figure \ref{figLemma}, where we plot $u_{\pm,\e}$ for different values of $\e$; in particular one can see how $u_{+,\e}$ decreases ($u_{-,\e}$ increases) if $\e$ increases.
\begin{figure}[htbp]
\includegraphics[scale=0.4]{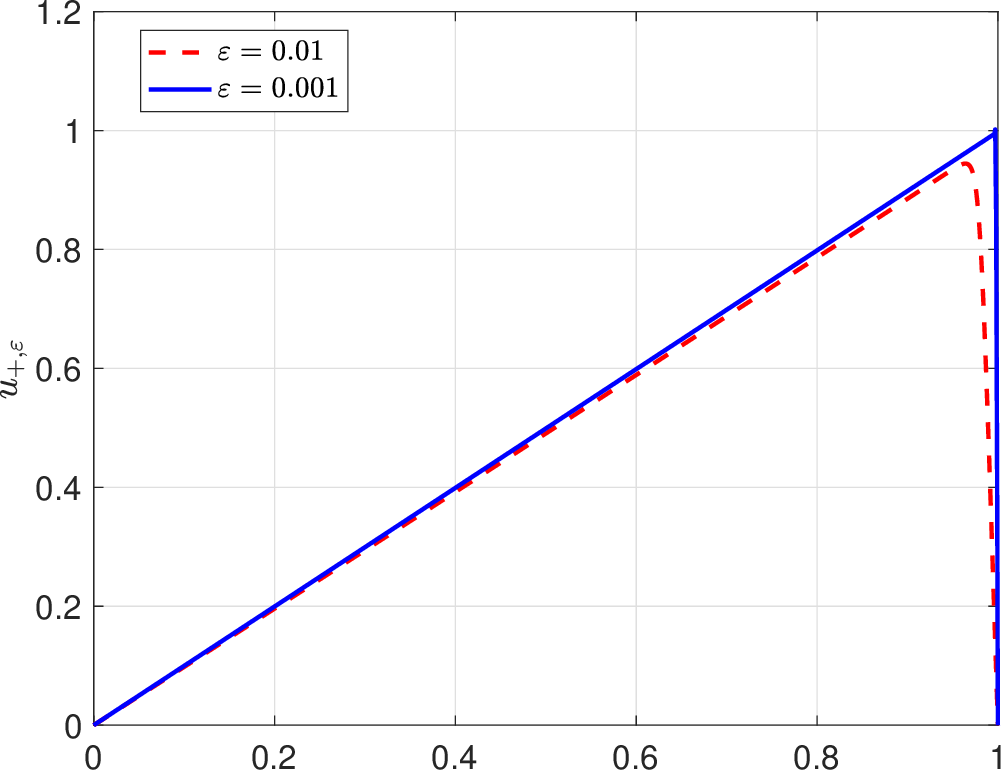}
\includegraphics[scale=0.4]{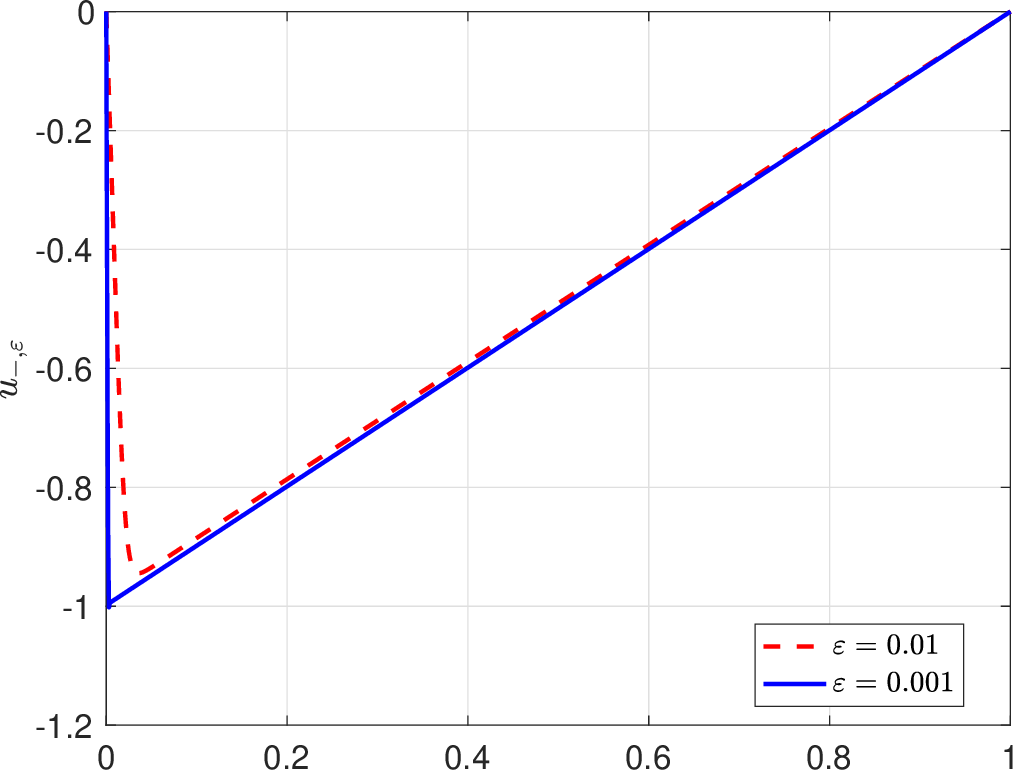}
\caption{\small{Plots of the solutions $u_{+,\e}$ and $u_{-,\e}$ (from left to right respectively) to \eqref{stationaryprob} with $h(u)=e^{-u^2}$ and $f(u)=u^2/2$.  }}
\label{figLemma}
\end{figure}

\begin{remark}\label{crescenzau-}
In the case where  $h$ is also symmetric, and hence $u_{-,\e}$ is explicitly given by \eqref{u-} (see Remark \ref{rem1}), one can derive the result of Lemma \ref{lemmacrescenza} by using the monotonic behaviour of  $u_{+,\e}(\ell-x)$ with respect to $\e$.
Indeed, one can easily find out that $u_{+,\e}(\ell-x)$ is the unique positive solution to 
\begin{equation}\label{nuovoperatore}
    -\e (h(u)u_x)_x -f'(u)(u_x+1)=0. 
\end{equation}
Therefore, setting $ \tilde{L}_\e u:=-\e (h(u)u_x)_x -f'(u)(u_x+1)$, it holds that 
$$\tilde{L}_\e (u_{+,\e_1}\circ (\ell- \cdot))\leq \tilde{L}_{\e_1} (u_{+,\e_1}\circ (\ell- \cdot))=0.$$
Hence we can conclude that $u_{+,\e_1}(\ell-x)$ is a sub-solution to problem \eqref{nuovoperatore}. Taking $\ell-x$ as super-solution (in fact $h'(\ell-x)\leq0$ by \eqref{segnodih}), we obtain that there exists a positive solution between $u_{+,\e_1}(\ell-x)$ and $\ell-x$, which in turn by uniqueness coincides with $u_{+,\e}(\ell-x)$. Thus for all $x\in (0,\ell)$ 
\begin{equation*}
u_{+,\e_1}(\ell-x) \leq u_{+,\e}(\ell-x),
\end{equation*}
and consequently we can infer that $u_{-,\e}(x)\leq u_{-,\e_1}(x)$ for all $x\in (0,\ell)$ .
\end{remark}
In the next propositions, we give a complete description of the (uniform) convergence of the steady states with respect to the parameter $\e$.

\begin{prop}\label{convergenzau+}
Let $u_{\pm,\e}$ be as in Proposition \ref{positivesolut} and in Proposition \ref{esistelau-}, respectively. Then, it holds that 
\begin{equation*}\label{limiteu+}
    \lim_{\varepsilon\to 0^+} u_{+,\varepsilon}(x)=x \qquad \mbox{and} \qquad  \lim_{\varepsilon\to 0^+} u_{-,\varepsilon}(x)=x-\ell,
\end{equation*}
uniformly on compact sets of $[0,\ell)$.
\end{prop}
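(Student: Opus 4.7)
The plan is to combine the $\varepsilon$-monotonicity of Lemmas \ref{lemmadecrescenza} and \ref{lemmacrescenza} with an explicit identification of the pointwise limit and Dini's theorem. I focus on the positive family $u_{+,\e}$, since $u_{-,\e}$ is analogous. By Lemma \ref{lemmadecrescenza} together with the upper bound $0 \leq u_{+,\e}(x) \leq x$ from \eqref{propr-sol-pos}, the family $\{u_{+,\e}\}$ is pointwise monotone increasing as $\varepsilon \downarrow 0^+$ and uniformly bounded, so the pointwise limit $u_*(x):=\lim_{\varepsilon \to 0^+} u_{+,\e}(x)$ exists and satisfies $0 \leq u_*(x) \leq x$ with $u_*(0)=u_*(\ell)=0$.

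The core of the argument is to show $u_*(x)=x$ on $[0,\ell)$. Integrating the stationary equation in \eqref{stationaryprob} from $0$ to $x$ yields
\begin{equation*}
\varepsilon h(u_{+,\e}(x))\,u'_{+,\e}(x) - f(u_{+,\e}(x)) = \varepsilon h(0)\,u'_{+,\e}(0) - \int_0^x f'(u_{+,\e}(s))\,ds,
\end{equation*}
an identity that avoids the second derivative and is well-suited for passing to the limit. The bound $u'_{+,\e}\leq 1$ from \eqref{propr-sol-pos} gives $\varepsilon u'_{+,\e}(0) \to 0$, while for fixed $x \in [0,\ell)$ the concavity of $u_{+,\e}$ together with $u_{+,\e}(\ell)=0$ yields, for any $y \in (x,\ell)$,
\begin{equation*}
u'_{+,\e}(x) \;\geq\; u'_{+,\e}(y) \;\geq\; -\frac{u_{+,\e}(y)}{\ell - y} \;\geq\; -\frac{\ell}{\ell-y},
\end{equation*}
so that $\varepsilon u'_{+,\e}(x) \to 0$ pointwise on $[0,\ell)$. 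Dominated convergence (with uniformly bounded integrand) and monotone convergence of $f(u_{+,\e})$ then deliver the limit identity
\begin{equation*}
f(u_*(x)) = \int_0^x f'(u_*(s))\,ds, \qquad x \in [0,\ell).
\end{equation*}

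Since $u_*\geq u_{+,\e_0}>0$ on any compact $K\subset(0,\ell)$ and $f$ is strictly increasing on $[0,R]$ by \eqref{segnodif}, $u_*$ inherits absolute continuity from $f(u_*)$; differentiating the limit identity produces $f'(u_*)(u_*'-1)=0$ almost everywhere, and \eqref{segnodif1} forces $u_*'\equiv 1$ a.e.\ on $(0,\ell)$, which with $u_*(0)=0$ identifies $u_*(x)=x$ on $[0,\ell)$. Since each $u_{+,\e}$ is continuous and the limit $u_*(x)=x$ is continuous on any compact $K\subset[0,\ell)$, the monotone pointwise convergence is upgraded to uniform convergence on $K$ by Dini's theorem. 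The case of $u_{-,\e}$ follows symmetrically, starting from Lemma \ref{lemmacrescenza}, the bound $u_{-,\e}(x)\geq x-\ell$ from Proposition \ref{esistelau-}, and integrating the equation from $x$ to $\ell$. The main obstacle I anticipate is controlling the viscous flux term $\varepsilon h(u_{+,\e})\,u'_{+,\e}$ in the limit, since $|u'_{+,\e}|$ can blow up near $x=\ell$ where the boundary layer forms; the concavity-based lower bound above is precisely what confines the singular behaviour to a shrinking neighbourhood of $\ell$ and permits the limit identification on compact subsets of $[0,\ell)$.
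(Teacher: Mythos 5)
Your proof is correct, but it takes a genuinely different route from the paper's. The paper also begins with the $\e$-monotonicity of Lemma \ref{lemmadecrescenza} to obtain a pointwise limit $\bar u(x)\le x$, but then identifies $\bar u$ purely by comparison: for $a<\ell$ and $\lambda<1$ it builds an explicit concave, piecewise-defined sub-solution $v_{a,\lambda}$ (equal to $\lambda x$ on $[0,a]$, then bent smoothly down to vanish at $x=\ell$), verifies $Lv_{a,\lambda}\le 0$ for $\e$ small enough, and uses uniqueness of the positive steady state to squeeze $v_{a,\lambda}\le u_{+,\e}\le x$; letting $a\to\ell$ and $\lambda\to 1$ then yields uniform convergence on compacts directly, with no appeal to Dini's theorem. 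You instead integrate the equation once, kill the viscous flux $\e h(u_{+,\e})u'_{+,\e}$ on compacts of $[0,\ell)$ via the concavity bound $-\ell/(\ell-x)\le u'_{+,\e}(x)\le 1$ (together with $u'_{+,\e}(0)\ge 0$, which follows from positivity), pass to the limit to obtain $f(u_*(x))=\int_0^x f'(u_*(s))\,ds$, and identify $u_*(x)=x$ by differentiating a.e.\ --- correctly using that $u_*$ is bounded away from $0$ on compacts of $(0,\ell)$ so that $f^{-1}$ is Lipschitz there and the degeneracy $f'(0)=0$ causes no trouble --- before upgrading to uniform convergence by Dini plus monotonicity. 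Your approach isolates exactly where the boundary layer obstructs convergence (the flux term near $x=\ell$, consistent with Proposition \ref{velocitaconv}) and characterizes the limit as the solution of the limiting integral identity, in the spirit of the vanishing-viscosity analysis of Section \ref{hyperbolic}; it also avoids designing a bespoke barrier. The paper's approach is more constructive and quantitative, bounding the error explicitly by $\sup_K|v_{a,\lambda}-x|$, and recycles the sub/super-solution machinery already set up for the existence results. Both arguments ultimately rest on the same two inputs: the monotonicity in $\e$ and the (assumed) uniqueness of the positive steady state.
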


\begin{proof}
Thanks to Lemma \ref{lemmadecrescenza}, we obtain that the pointwise limit of $u_{+,\varepsilon}(x)$ in $(0,\ell)$ as $\varepsilon\to 0^+$ does exist and, denoting by 
$\Bar{u}(x)$ such a limit, we have that $\Bar{u}(x)\leq x$ in virtue of Proposition \ref{positivesolut}. In order to prove that $\Bar{u}(x)$ coincides with $x$, we build a sub-solution to problem  \eqref{stationaryprob} for $\e$ small enough, which is arbitrarily close to $x$. 
To this aim, we consider two positive numbers $a<\ell$ and $\lambda<1$ (to be taken later as close as possible to $\ell$ and $1$ respectively) and we define the following function 
\begin{equation*}
    v_{a,\lambda}(x) := 
    \begin{cases}
    	\lambda x, \qquad\qquad  & x\in [0,a], \\ 
	\xi(x), &  x\in [a,b], \\  
	\displaystyle\frac{\lambda a}{\ell-b} (\ell-x), & x\in [b,\ell],
    \end{cases}
\end{equation*}
where $b=(\ell+a)/2$, $\xi$ is a function of class $C^2$ such that 
\begin{align*}
	&\xi(a)=\xi(b)= \lambda a, \qquad  \xi'(a)=\lambda, \qquad  \xi'(b)=-\frac{\lambda a}{\ell-b},\\
 	&\xi''(a)=\xi''(b)=0 \qquad \text{ and }\quad  \xi''(x)\leq 0, \quad  \text{ for all }\ x\in [a,b].
\end{align*} 
In particular, we deduce that $\xi'(x) \leq 1$ for every $x\in [a,b]$. 
Putting together all these information, we can easily check that for $\e$ small enough in dependence on $a$ and $\lambda$ 
\begin{equation*}\label{subsolutionproof}
L v_{a,\lambda} = -\varepsilon h'(v_{a,\lambda})(v_{a,\lambda})_x^2-\varepsilon h(v_{a,\lambda})(v_{a,\lambda})_{xx} +f'(v_{a,\lambda})[(v_{a,\lambda})_x-1]\leq 0,
\end{equation*}
being $L$ as in \eqref{operatoreL}. Indeed, the first term in the above expression is positive since $h'(v_{a,\lambda})\leq0$ by \eqref{segnodih}, while the third term is negative since $(v_{a,\lambda})_x <1$ by construction; the  second term  vanishes if $x \in [0,a] \cup [b,\ell]$ and is positive for $x \in [a,b]$. Thus we can conclude that $v_{a,\lambda}$ is a sub-solution to problem \eqref{stationaryprob} provided that $\e$ is small enough in dependence on $a$ and $\lambda$. 
If in addition we take into account that $v_{a,\lambda}(x)\leq x $ for every $x\in [0,\ell]$ and that $x$ is a super-solution to problem \eqref{stationaryprob}, by uniqueness of the positive solution we can conclude that
\begin{equation*}
\sup_{x\in [0,\ell)} |u_{+,\e}(x)-x|\leq \sup_{x\in [0,\ell)}  |v_{a,\lambda}(x)-x|,
\end{equation*}
for $\e>0$ conveniently small in dependence on $a$ and $\lambda$.
Since the right hand side is arbitrarily small choosing $a$ and $\lambda$ sufficiently close to $\ell$ and 1 respectively, the above inequality implies the uniform convergence of $u_{+,\e}$ to $x$ as $\e\to 0^+$ and thus that $\Bar{u}(x)= x$. 
The proof for $u_{-,\e}$ is completely analogous and we omit it for shortness.
\end{proof}

\noindent Similarly, as a corollary of the previous proposition, we can derive a convergence-type result for the steady states $u_{1^-,\e}$ and  $u_{1^+,\e}$ as well.
\begin{corollary}\label{convuM}
For all $x\in (0,\ell)$ it holds that, as $\varepsilon \to 0^+$ 
\begin{itemize}
 \item the solution $u_{1^-,\e}$ converges pointwise to the function $x-\frac{\ell}{2}$;
\item the solution $u_{1^+,\e}$ converges pointwise to the function $x \, \chi_{(0,\ell/2)} + (x-\ell) \, \chi_{(\ell/2,\ell)}$.
\end{itemize}
Moreover, the convergences are uniform on compact sets of $(0,\ell)$ and $\left[0,\frac{\ell}{2}\right) \cup \left(\frac{\ell}{2}, \ell\right)$ respectively.
\end{corollary}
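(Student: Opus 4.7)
The plan is to reduce the corollary to Proposition~\ref{convergenzau+} using the piecewise description of $u_{1^\pm,\e}$ from Proposition~\ref{onezerosol}. Recall from \eqref{uM} that $u_{1^-,\e}$ equals $u_{-,\e}(\cdot;0,\ell/2)$ on $[0,\ell/2]$ and $u_{+,\e}(\cdot;\ell/2,\ell)$ on $[\ell/2,\ell]$; symmetrically, $u_{1^+,\e}$ equals $u_{+,\e}(\cdot;0,\ell/2)$ on $[0,\ell/2]$ and $u_{-,\e}(\cdot;\ell/2,\ell)$ on $[\ell/2,\ell]$. Thus it suffices to determine the $\e\to 0^+$ limit of each of these four sub-interval branches separately.

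Since the stationary equation in \eqref{stationaryprob} has no explicit $x$-dependence, it is translation invariant; setting $w(y):=u_{\pm,\e}(y+a;a,b)$ recasts the BVP on $(a,b)$ as the BVP on $(0,b-a)$ with homogeneous data. Applying Proposition~\ref{convergenzau+} on the shifted interval upgrades it to an arbitrary subinterval $(a,b)\subset[0,\ell]$, namely
\begin{equation*}
\lim_{\e\to 0^+} u_{+,\e}(x;a,b)=x-a \qquad \text{and} \qquad \lim_{\e\to 0^+} u_{-,\e}(x;a,b)=x-b,
\end{equation*}
with uniform convergence on compact subsets of $[a,b)$ for the positive branch and of $(a,b]$ for the negative one; the excluded endpoint in each case is the wall opposite to the sign of the solution, where a thin boundary layer matches the interior profile to the zero boundary datum.

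Specializing to $(a,b)=(0,\ell/2)$ and $(a,b)=(\ell/2,\ell)$ and gluing yields the corollary. For $u_{1^-,\e}$, both sides of $\ell/2$ converge to $x-\ell/2$, whose restrictions agree continuously at $\ell/2$; given any compact $K\subset(0,\ell)$, one writes $K=(K\cap[0,\ell/2])\cup(K\cap[\ell/2,\ell])$ and applies the two uniform estimates (each piece is contained in a compact set that avoids the endpoint where the layer forms), obtaining uniform convergence on $K$. For $u_{1^+,\e}$, on the other hand, the two limits $x$ on $[0,\ell/2]$ and $x-\ell$ on $[\ell/2,\ell]$ differ at $\ell/2$, so uniformity breaks on every neighbourhood of $\ell/2$ but is preserved on compact subsets of $[0,\ell/2)\cup(\ell/2,\ell)$ by the same splitting; pointwise convergence still holds for every $x\neq\ell/2$, which suffices since the limit function is not defined at $\ell/2$.

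The only non-routine step is the translation extension of Proposition~\ref{convergenzau+} to arbitrary subintervals, and I expect this to be essentially free since the construction of the sub-solution $v_{a,\lambda}$ in the proof of Proposition~\ref{convergenzau+} only uses the autonomous character of the equation and therefore transports verbatim to any shifted interval.
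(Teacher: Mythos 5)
Your proof is correct, and it follows the same basic strategy as the paper --- split $u_{1^\pm,\e}$ at $x=\ell/2$ into its one-signed branches and reduce to the convergence result for $u_{\pm,\e}$ --- but the execution differs in a worthwhile way. The paper does not invoke translation invariance: it first uses the monotonicity Lemmas \ref{lemmadecrescenza}--\ref{lemmacrescenza} together with \eqref{x-ell2} to establish that the pointwise limit exists and is sandwiched against $x-\ell/2$, and then identifies the limit by explicitly constructing new piecewise barriers $\bar v_{a,\lambda}$ and $\tilde v_{a,\lambda}$ adapted to each half-interval, essentially rewriting the proof of Proposition \ref{convergenzau+} in shifted coordinates. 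You instead observe that the stationary operator $L$ in \eqref{operatoreL} is autonomous in $x$, so $u_{\pm,\e}(\cdot\,;a,b)$ is a translate of $u_{\pm,\e}(\cdot\,;0,b-a)$ and Proposition \ref{convergenzau+} (applied with interval length $b-a$) transfers wholesale; this makes the corollary a genuine corollary and removes the need to re-verify the barrier inequalities. Your gluing argument over compact sets is sound, and you correctly place the excluded endpoint for each branch at the wall where the boundary layer sits --- note that for the negative branch this means uniform convergence on compacts of $(a,b]$, which slightly sharpens the way Proposition \ref{convergenzau+} states its uniformity for $u_{-,\e}$ (convergence to $x-\ell$ clearly fails at $x=0$). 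The only caveats are the ones the paper itself carries: the reduction uses the (assumed) uniqueness of the one-signed solution on each subinterval to identify the translate with $u_{\pm,\e}(\cdot\,;a,b)$, and one needs $\e<\gamma(\ell/2\pi)^2$ for the half-interval solutions to exist, which is harmless in the limit $\e\to0^+$.
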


\begin{proof}
For the sake of brevity, we focus on the function $u_{1^-,\e}$, giving only a hint of the proof of its convergence since it can be easily deduced by appropriately modifying the proof of Proposition \ref{convergenzau+}. 
More precisely, we study separately the convergence in the two intervals $(0,\ell/2)$ and $(\ell/2,\ell)$: 
by the way $u_{1^-,\e}$ is defined, thanks to the monotonicity properties of $u_{+,\e}$ and $u_{-,\e}$ provided in Lemmas \ref{lemmadecrescenza} and \ref{lemmacrescenza} respectively, and in virtue of \eqref{x-ell2}, we have that the limit of $u_{1^-,\e}$ as $\e\to 0^+$ exists and it is greater than $x-\ell/2$ for $x\in (0,\ell/2)$ and less than $x-\ell/2$ for $x\in (\ell/2,\ell)$. 
Therefore, arguing as in the proof of Proposition \ref{convergenzau+}, in order to prove that the limit is exactly equal to $x-\ell/2$, we construct a super-solution in the interval $(0,\ell/2)$ and a sub-solution in $(\ell/2,\ell)$ for $\e$ sufficiently small. 
Specifically, in the latter interval we get inspiration from the function $v_{a,\lambda}$ of the proof of Proposition \ref{convergenzau+}: we take 
$a\in (\ell/2,\ell)$, $b=(\ell+a)/2$ and $\lambda\in (0,1)$, and define  
\begin{equation*}
    \bar{v}_{a,\lambda}(x) :=
    \begin{cases}
    	\lambda (x-\ell/2), \qquad \quad & x\in [\ell/2,a], \\ 
	\xi(x), & x\in [a,b], \\  
	\displaystyle\frac{\lambda (a-\ell/2)}{\ell-b} (\ell-x), & x\in [b,\ell],
    \end{cases}
\end{equation*}
where $\xi$ is of class $C^2$ and satisfies 
\begin{align*}
	&\xi(a)=\xi(b)= \lambda(a-\ell/2), \qquad  \xi'(a)=\lambda,\qquad  \xi'(b)=\frac{\lambda(\ell/2-a)}{\ell-b},\\
	& \xi''(a)=\xi''(b)=0 \qquad \text{ and }\quad  \xi''(x)\leq 0, \quad \text{for all}\ x\in [a,b].
\end{align*}
By symmetry, we can construct a super-solution in the interval $(0,\ell/2)$ as follows
\begin{equation*}
    	\tilde{v}_{a,\lambda}(x) := 
    	\begin{cases}
    		\displaystyle\frac{\lambda (\ell/2-a)}{\ell-b}\, x, \qquad & x\in [0,\ell-b], \\ 
		\zeta(x), & x\in [\ell-b,\ell-a], \\  
		\lambda (x-\ell/2), & x\in [\ell-a,\ell/2],
    	\end{cases}
\end{equation*}
where $\zeta(x)=-\xi(\ell-x)$ and hence
\begin{align*}
	&\zeta(\ell-b)=\zeta(\ell-a)=\lambda(\ell/2-a), \qquad   \zeta'(\ell-b)= \frac{\lambda(\ell/2-a)}{\ell-b},\qquad \zeta'(\ell-a)=\lambda,\\
	&\zeta''(\ell-b)=\zeta''(\ell-a)=0, \qquad \zeta''(x)\geq 0, \quad \text{ for any }\ x\in [\ell-b,\ell-a].
\end{align*}
The proof is thereby complete.
\end{proof}
The results of Corollary \ref{convuM} are illustrated in Figure \ref{figconvergenza} (with $\ell=1$); precisely one can see that, the more $\e \to 0$, the more $ u_{1^-,\e}$ is close to $x-1$ (right picture), while $u_{1^+,\e}$  to the function $x \, \chi_{(0,1/2)} + (x-1) \, \chi_{(1/2,1)}$ (left picture).

\begin{figure}[htbp]
\begin{center}
\includegraphics[scale=0.4]{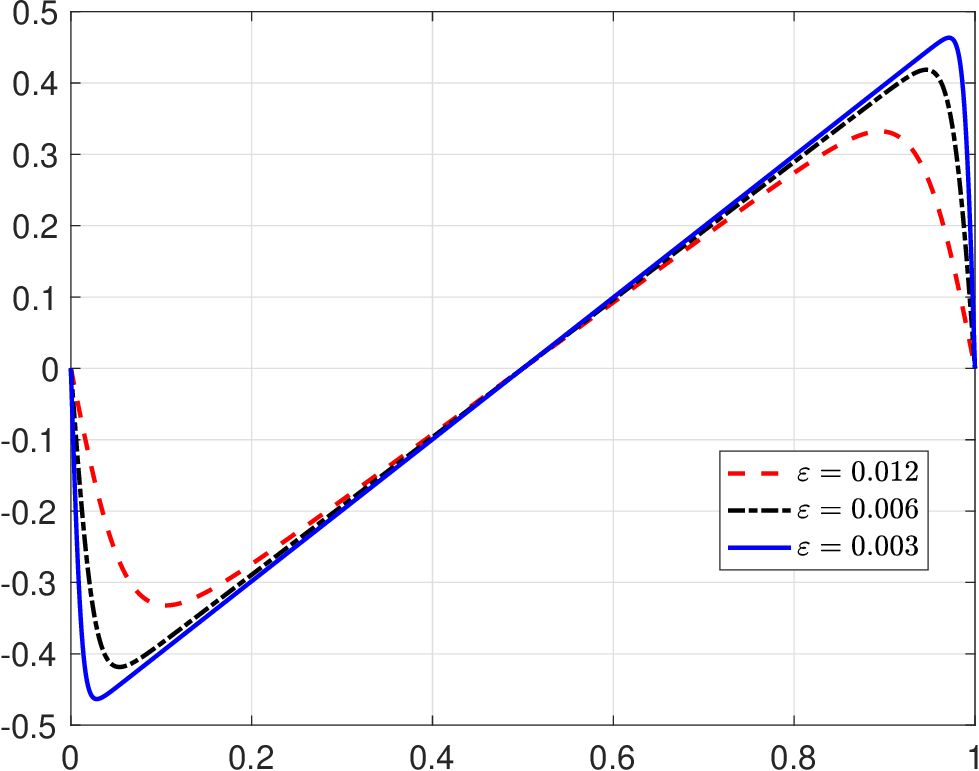}
\includegraphics[scale=0.4]{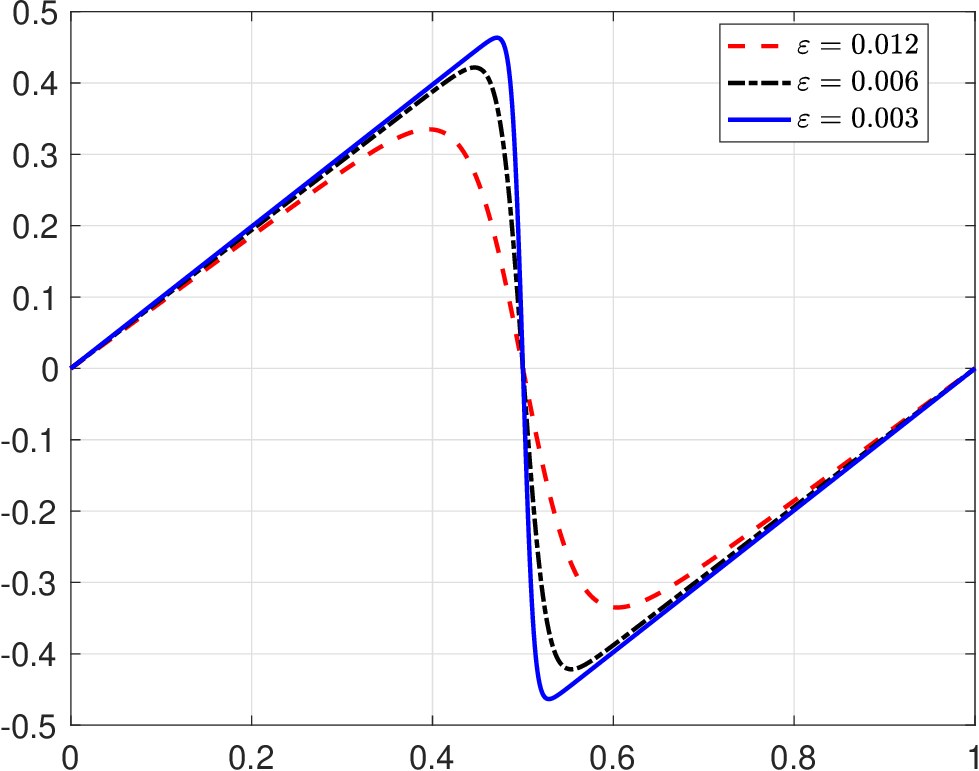}
\end{center}
\caption{\small{Plots of $u_{1^-,\e}$ (right picture) and $u_{1^+,\e}$ (left picture) for different values of $\e$, with $h(u)=e^{-u^2}$ and $f(u)=u^2/2$.}}
\label{figconvergenza}
\end{figure}

In the following proposition, we improve the result contained in Proposition \ref{convergenzau+} above, giving a precise estimate of the behaviour of $u'_{+,\e}$ at $x=\ell$ and of $u'_{-,\e}$ at $x=0$ as $\e$ becomes smaller. Indeed, since  $u_{+,\e}$ is positive and $u_{+,\varepsilon}(\ell)=0$, we already know that $u_{+,\varepsilon}'(\ell)<0$,  as well as $u_{-,\varepsilon}'(0)<0$; we additionally prove that
both $u'_{+,\e}(\ell)$ and $u'_{-,\e}(0)$ behave like $-\e^{-1}$ as $\e\to 0^+$. 
\begin{prop}\label{velocitaconv}

There exist two positive constants $\beta_1, \beta_2>0$ such that, for $\e$ small enough
\begin{equation}\label{stima11}
-\frac{\beta_1}{\varepsilon}  \leq   u'_{+,\varepsilon}(\ell) \leq -\frac{\beta_2}{\varepsilon}.
\end{equation}
Analogously, there exist two positive constants $\gamma_1, \gamma_2>0$ such that, for $\e$ small enough
\begin{equation}\label{stima12}
-\frac{\gamma_1}{\varepsilon}  \leq   u'_{-,\varepsilon}(0) \leq -\frac{\gamma_2}{\varepsilon}.
\end{equation}
\end{prop}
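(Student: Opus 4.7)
The plan is to integrate the stationary equation once over $[0,\ell]$ and read off the two-sided estimates on the boundary derivatives directly from the resulting identity. Integrating $\e(h(u_{+,\e})u'_{+,\e})_x = f(u_{+,\e})_x - f'(u_{+,\e})$ on $[0,\ell]$ and using $u_{+,\e}(0)=u_{+,\e}(\ell)=0$ (so that the boundary values of $h(u_{+,\e})$ coincide with $h(0)$ and those of $f(u_{+,\e})$ vanish) gives
\begin{equation*}
\e\, h(0)\bigl(u'_{+,\e}(\ell)-u'_{+,\e}(0)\bigr) \;=\; -\int_0^\ell f'(u_{+,\e}(x))\,dx.
\end{equation*}
So \eqref{stima11} reduces to two claims: (i) $u'_{+,\e}(0)$ stays bounded as $\e\to 0^+$, and (ii) the integral $I(\e):=\int_0^\ell f'(u_{+,\e})\,dx$ stays bounded and bounded away from $0$ uniformly in small $\e$.

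For (i), Proposition \ref{positivesolut} says that $u_{+,\e}$ is concave, non-negative, and bounded above by $x$; concavity plus $u_{+,\e}(0)=0$ then forces $0\leq u'_{+,\e}(0)\leq 1$. For (ii), the upper bound $I(\e)\leq f(\ell)$ is immediate from $u_{+,\e}\leq x$ and monotonicity of $f'$ on $[0,\ell]$ (recall $f'(0)=0$ and $f''>0$). For the positive lower bound I would invoke Proposition \ref{convergenzau+}: uniform convergence $u_{+,\e}\to x$ on a compact subinterval, say $[\ell/2,3\ell/4]$, implies $u_{+,\e}\geq 3\ell/8$ there for $\e$ small, and hence $I(\e)\geq \tfrac{\ell}{4}f'(3\ell/8)>0$. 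Plugging (i)–(ii) into the identity above produces constants $\beta_1,\beta_2>0$ that realize \eqref{stima11}.

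The estimate \eqref{stima12} follows by the same template. Integrating on $[0,\ell]$ yields
\begin{equation*}
\e\, h(0)\bigl(u'_{-,\e}(\ell)-u'_{-,\e}(0)\bigr) \;=\; -\int_0^\ell f'(u_{-,\e}(x))\,dx,
\end{equation*}
and now $u'_{-,\e}(0)$ is the divergent quantity. By Proposition \ref{esistelau-}, $u_{-,\e}$ is convex, non-positive, and $u_{-,\e}(x)\geq x-\ell$; convexity and $u_{-,\e}(\ell)=0$ give $0\leq u'_{-,\e}(\ell)\leq 1$. Because $f'(u)<0$ for $u<0$, the integral is negative; the bound $u_{-,\e}\geq x-\ell$ together with the monotonicity of $f'$ gives the lower bound $\int_0^\ell f'(u_{-,\e})\,dx\geq -f(-\ell)$ on the size of $|I|$, while uniform convergence $u_{-,\e}\to x-\ell$ on a compact subinterval separated from the boundary layer at $x=0$ (for instance $[\ell/2,3\ell/4]$, where $u_{-,\e}\leq -\ell/8$ for small $\e$) furnishes a strictly negative upper bound on the integral. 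These bounds plugged into the identity above produce $\gamma_1,\gamma_2>0$ realizing \eqref{stima12}.

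The proof is essentially routine once the integral identity is in place; the main technical care point, which I expect to be the only real obstacle, is the lower bound on $|I(\e)|$, since $f'$ degenerates at $u=0$ and so the integrand is small precisely near the endpoints where $u_{\pm,\e}$ vanishes. The way to handle this is the one indicated above, namely to exploit the uniform convergence provided by Proposition \ref{convergenzau+} on a fixed compact subinterval well inside $(0,\ell)$, where the limit $x$ (resp.~$x-\ell$) is bounded away from zero.
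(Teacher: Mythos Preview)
Your proof is correct and considerably more direct than the paper's. The paper instead inverts $u_{+,\e}$ on its descending branch $[x_\e,\ell]$, sets $q(y)=u'_{+,\e}(u_{+,\e}^{-1}(y))$ and $z(y)=h(y)q(y)$, derives the ODE $\e z z'=f'(y)(z-h(y))$, and then integrates this from $0$ to the maximum value $M_\e$ after replacing $h$ by $M_1=\max h$ and $m_1=\min h$ to force the sign; the resulting inequality involves a logarithm of $1-u'_{+,\e}(\ell)$ and yields bounds of the form $|u'_{+,\e}(\ell)|\asymp f(M_\e)/\e$. Your one-line integration of the equation over $[0,\ell]$ sidesteps this entirely: the nonlinear diffusion is harmless because the boundary values of $h(u_{+,\e})$ both collapse to $h(0)$, and the identity $\e h(0)(u'_{+,\e}(\ell)-u'_{+,\e}(0))=-\int_0^\ell f'(u_{+,\e})$ reduces the problem to the elementary bounds (i)--(ii) that you state. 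The paper's route gives constants expressed via $f(M_\e)$, which is marginally more explicit, but for the statement as written your argument is cleaner and uses strictly less machinery. One small remark: you could also get the lower bound on $I(\e)$ without invoking Proposition~\ref{convergenzau+}, by using instead the monotonicity in $\e$ from Lemma~\ref{lemmadecrescenza} (so $u_{+,\e}\geq u_{+,\e_0}$ for $\e<\e_0$ and hence $I(\e)\geq I(\e_0)>0$); either way is fine.
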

\begin{proof}
As already shown in the proof of Proposition \ref{positivesolut}, $u_{+,\e}$ has a unique maximum point located at some point $x_\e\in (0,\ell)$; denoted with $M_\e = u_{+,\e}(x_\e)$, 
we observe that $u_{+,\e}$ is invertible in the interval $[x_\varepsilon, \ell]$, so that we are allowed to consider the function $u^{-1}_{+,\e}$ from $[0,M_\varepsilon]$ to $[x_\varepsilon, \ell]$. 
We introduce a new function $q$ of the variable $y\in [0,M_\varepsilon]$ such that 
$$q( y)= u'_{+,\e}( x) \quad \mbox{where} \quad  x = u_{+,\e}^{-1}( y)\in [x_\e,\ell].$$ 
In this way $q(0)=u'_{+,\e}(\ell)$ since $u_{+,\varepsilon}^{-1}(0)\in \{0,\ell\}$ and we are inverting the function $u_{+,\e}$ in the interval $[x_\varepsilon,\ell]$, and $q(M_\e)=u'_{+,\e}(x_\e)=0$.
Exploiting the above change of variable, \eqref{stationaryprob} turns into
\begin{equation}\label{SProof'}
   -\e(h(y)q(y))_x+f'(y)(q(y)-1)=0. 
\end{equation}
If we now set $z(y)=h(y)q(y)$, then we obtain 
$$\left(z(y) \right)_x= z'(u_{+,\e}(x))(u_{+,\e})_x=z'(y) q(y) = z'(y) \, \frac{z(y)}{h(y)}.$$
Plugging this into \eqref{SProof'}, we obtain that
\begin{equation}\label{lanumero}
    -\e z'(y) \frac{z(y)}{h(y)}+ \frac{f'(y)}{h(y)} \left(z(y)-h(y) \right)=0,
\end{equation}
from which in turn we deduce the following inequality 
\begin{equation}\label{ciao'}
   \e z(y) z'(y)=f'(y)(z(y)-h(y)) \geq f'(y)(z(y)-M_1), 
\end{equation}
being $$M_1 := \max_{x \in [0,\ell]} h(u_{+,\e}(x)).$$ At this point we remark that $z(y)\leq M_1$ for all $y \in [0,M_\varepsilon]$: indeed, for all $y \in [0,M_\varepsilon]$, $q(y)=u'_{+,\e}(x)\leq 1$  as a consequence of Proposition \ref{positivesolut} and $h(y)\leq M_1$ by definition of $M_1$. For this, we divide each member of \eqref{ciao'} by $M_1-z(y)$ in order to preserve the inequality. 
Integrating \eqref{ciao'} between $0$ and $M_\e$, by separation of variables,  we get
\begin{equation*}
    \int_0^{M_\e} \frac{z' z}{M_1-z} \, dy \geq -\frac{1}{\e} \int_0^{M_\e} f'(y) \, dy,
\end{equation*}
which in turn thanks to \eqref{segnodif} leads to 
\begin{equation*}\label{proof2'}
      \e[z(y)+ M_1 \ln(M_1-z(y))  ]^{y=M_\e}_{y=0} \leq  f(M_\e).
\end{equation*}
From this, recalling that $z(M_\e)= 0$ and $z(0)=M_1 u'_{+,\e}(\ell)$ since $M_1=h(0)$, we obtain 
$$-\e M_1 u'_{+, \e }(\ell)+ \e M_1\ln(M_1)+ \e M_1 \ln \left( \frac{1}{M_1 - M_1 u'_{+, \e }(\ell)}\right) \leq {f(M_\e)},$$
which, rewriting the left-hand side in a proper way, can be rephrased as follows
\begin{equation}\label{disug'}
\begin{aligned}
-\e M_1 u'_{+, \e }(\ell) \left[1+ \frac{\ln \left(1- u'_{+, \e }(\ell)\right)}{u'_{+, \e }(\ell)}  \right] \leq  {f(M_\e)}.
\end{aligned}
\end{equation}
At this point we note that the function
$$g(s)= \frac{\ln \left(1- s\right)}{ s}, \qquad s\in (-\infty,0)$$
is negative and such that 
$$\lim_{s \to 0^-} g(s) = -1 \qquad \mbox{and} \qquad \lim_{s \to -\infty} g(s)=0.$$
Hence $$\sup_{s\in (-\infty,0)}\left|g(s)\right|=1;$$ from this and \eqref{disug'}, we infer that there exists a positive constant $c_1>0$ such that 
$$-\e M_1 u'_{+,\e}(\ell)\,c_1 \leq \, f(M_\e),$$
that is, 
$$u'_{+,\e}(\ell) \geq - \frac{  f(M_\e)}{\e \,c_1\,M_1}.$$
From this we can conclude that there exists $\beta_1>0$ such that for $\e$ sufficiently small $$u'_{+,\e}(\ell) \geq - \frac{  \beta_1}{\e }.$$  

In order to obtain the inverse inequality, we carry out the same computations by using 
$$m_1:=\min_{x \in [0,\ell]} h(u _{+,\e}(x)); $$
namely, referring to \eqref{lanumero}, we have that
\begin{equation*}
    \varepsilon z(y) z'(y)=f'(y)(z(y)-h(y))\leq f'(y)(z(y)-m_1).
\end{equation*}
Observing that $m_1-z$ is positive since $u'_{+,\e}(x)\leq 0$ for every $x\in [x_\e,\ell]$, we divide each member by $m_1-z(y)$ and, integrating between 0 and $M_\e$, we deduce that
\begin{equation*}
    [z(y)+m_1 \ln (m_1-z(y))]^{y=M_\e}_{y=0} \geq  \frac{f(M_\e)}{\e}.
\end{equation*}
Hence it holds that
\begin{equation*}
   -M_1 u'_{+,\e}(\ell) + m_1\ln m_1-m_1\ln (m_1 - M_1 u'_{+,\e}(\ell))\geq \frac{f(M_\e)}{\e},
\end{equation*}
which can be rewritten as follows 
\begin{equation*}\label{eq56}
    -M_1 u'_{+,\e}(\ell) \left[1+ \frac{\ln \left(1-\frac{M_1 u'_{+,\e}(\ell)}{m_1}\right)}{\frac{ M_1 u'_{+,\e}(\ell)}{m_1}} \right] \geq \frac{f(M_\e)}{\e}.  
\end{equation*}
As before, we can infer that there exists a positive constant $c_2>0$ such that 
$$ -M_1 u'_{+,\e}(\ell) c_2\geq  \frac{f(M_\e)}{\e},$$
and thus there exists $\beta_2>0$ such that for $\e$ sufficiently small it holds that
$$u'_{+,\e}(\ell) \leq -\frac{\beta_2}{\e}.$$
The proof of \eqref{stima11} is thereby complete. As for the proof of \eqref{stima12}, it is enough to take, in place of $M_\e$, the unique minimum of $u_{-,\e}$ denoted for instance with $m_\e$ and let now $x_\e\in (0,\ell)$ be such that $u_{-,\e}(x_\e)=m_\e$; after that, we introduce for every $y\in [0,m_\e] $ the function $p(y)$ defined as $u'_{-,\e}(x)$, where $x=u_{-,\e}^{-1}(y)\in [0, x_\e]$. 
Defining the function $z(y):= h(y)p(y)$ and reasoning precisely as above, we can conclude the proof of the proposition. 
\end{proof}

\subsection{Stability properties of the steady states}
In the present subsection we prove on one hand the stability of the steady states $u_{\pm,\e}$, and on the other hand the instability of $u_{1^-,\e}$ and $u_{1^+,\e}$.
In the following proposition we exhibit a super-solution and two sub-solutions that will be employed to prove the results below in Theorem \ref{stabthm}. 
\begin{prop}\label{preliminary1}
The function $v_1(x)= A x + B $ is a super-solution to \eqref{stationaryprob} for all $A >1$ and $B>0$. Moreover, for all $0 \leq a \leq  b \leq \ell$ the function
$$v_2(x) := \begin{cases} u_{-,\e}(x;0,a), \qquad & x \in [0,a), \\ 
u_{+,\e}(x;a,b), & x \in [a,b), \\
0,& x \in [b,\ell],\end{cases}$$
is a sub-solution  to \eqref{stationaryprob} if $a \leq b-a$; in addition, if $\ell-b<b-a$, also the function
$$v_3(x) := \begin{cases} u_{-,\e}(x;0,a), \qquad & x \in [0,a), \\ 
u_{+,\e}(x;a,b),& x \in [a,b), \\
u_{-,\e}(x;b,\ell), & x \in [b,\ell],\end{cases}$$
is a sub-solution  to \eqref{stationaryprob}.
\end{prop}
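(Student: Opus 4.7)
The plan is to verify the three assertions separately, treating $v_1$ by a direct computation and $v_2, v_3$ via the weak formulation of Definition \ref{def_subsuper}. For $v_1(x) = Ax + B$ one has $(v_1)_x = A$ and $(v_1)_{xx} = 0$, so
\begin{equation*}
L v_1(x) = -\e A^{2}\, h'(Ax + B) + (A-1)\, f'(Ax+B).
\end{equation*}
Since $v_1 > 0$ on $[0, \ell]$, assumption \eqref{segnodih} forces $h'(v_1) \leq 0$ and \eqref{segnodif1} gives $f'(v_1) > 0$; together with $A > 1$ this makes both summands non-negative, and the boundary values $v_1(0) = B$ and $v_1(\ell) = A\ell + B$ are positive, so $v_1$ is a classical (hence weak) super-solution.

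The key observation for $v_2$ and $v_3$ is that each is already a classical solution of \eqref{stationaryprob} on every open subinterval of its defining partition, and vanishes at every interior node $x_0 \in \{a, b\}$. Integrating by parts piecewise and cancelling the bulk integrals against the classical equation, a direct computation would show that for every admissible test function $\varphi \geq 0$,
\begin{equation*}
\int_0^\ell \{\e h(v)\, v_x \varphi_x + [f'(v)\, v_x - f'(v)]\varphi\}\, dx = \e\, h(0) \sum_{x_0} \varphi(x_0)\,[v_x(x_0^-) - v_x(x_0^+)],
\end{equation*}
where the sum runs over the interior nodes and I have used $h(v(x_0)) = h(0)$ together with $f(0) = f'(0) = 0$ from \eqref{segnodif}. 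The weak sub-solution property therefore reduces to the pointwise inequalities $v_x(x_0^-) \leq v_x(x_0^+)$ at every interior node, since the boundary values at $0$ and $\ell$ vanish.

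For $v_2$ this amounts to two checks. At $x = b$ the right derivative vanishes (because $v_2 \equiv 0$ on $[b, \ell]$) while the left derivative is $u'_{+,\e}(b^-; a, b) < 0$, so the jump is automatically non-positive. At $x = a$, Corollary \ref{risolutivo} applied to the intervals $(0,a)$ and $(a,b)$ of lengths $a \leq b-a$ yields $u'_{-,\e}(a; 0, a) \leq u'_{+,\e}(a; a, b)$, which is precisely the required jump inequality. For $v_3$, the analysis at $x = a$ is identical, while the new node $x = b$ requires the inequality $u'_{+,\e}(b; a, b) \leq u'_{-,\e}(b; b, \ell)$ between two negative slopes.

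I expect the main obstacle to be this last inequality for $v_3$: it is the mirror image of Corollary \ref{risolutivo} at the right endpoint rather than the left, so it is not directly covered by that statement. My plan is to recover it by the same reflection-plus-monotonicity strategy used in the proof of that corollary, now applied on the right: compare $u_{-,\e}(\cdot; b, \ell)$ with a reflected copy of $u_{+,\e}(\cdot; a, b)$ that fits inside $(a, b)$ (which is possible exactly because $\ell - b < b - a$), and then invoke the monotonicity of the endpoint slopes encoded in \eqref{corol1} and \eqref{corol2} from Proposition \ref{monotonic} to conclude the desired slope ordering.
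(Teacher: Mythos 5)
Your proposal is correct and follows essentially the same route as the paper: a direct computation of $Lv_1$ for the super-solution, and for $v_2$, $v_3$ the piecewise weak formulation that reduces everything to the sign of the derivative jumps at the interior nodes, settled by Corollary \ref{risolutivo} at $x=a$, trivially at $x=b$ for $v_2$ (right derivative zero, left derivative negative), and by the mirrored slope inequality under $\ell-b<b-a$ at $x=b$ for $v_3$. The only difference is that you explicitly flag this last inequality as not being literally covered by Corollary \ref{risolutivo}, whereas the paper simply invokes the corollary again; your proposed reflection-plus-monotonicity fix is exactly the argument underlying that corollary's own proof, so this is a point of extra care rather than a gap.
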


\begin{proof}
If we compute the operator $L$ defined as in \eqref{operatoreL} at $v_1$, we have that
$$Lv_1=- \e \, A^2 h'(v_1)+ f'(v_1) (A-1) \geq 0,$$
since $A>1$ and in light of \eqref{segnodih} and \eqref{segnodif1}.
On the other side, we first notice that 
because of its construction the function $v_2$ satisfies $Lv_2=0$ on each of intervals $(0,a)$, $(a,b)$ and $(b,\ell)$.
Then, recalling Definition \ref{def_subsuper}, for any test function $\varphi \geq 0$ in $(0,\ell)$ such that $\varphi(0)=\varphi(\ell)=0$, we have that 
\begin{equation}\label{contiale}
    \begin{aligned}
     \int_0^\ell\{\varepsilon h(v_2)v_2' \varphi_x+&[f'(v_2)v_2'-f(v_2)]\varphi \}\, dx  \\
     &=\e h(v_2(a))v_2'(a^-)\varphi(a)\\
     &\quad +\e h(v_2(b))v_2'(b^-)\varphi(b)-\e h(v_2(a))v_2'(a^+)\varphi(a)\\
     &\leq 0,
    \end{aligned}
\end{equation}
where we have used that $v_2'(b^+)=0$, $v_2'(b^-)$ is negative and the difference $v_2'(a^-)-v_2'(a^+)$ is negative in light of Corollary \ref{risolutivo}; hence we can conclude that $v_2$ is a sub-solution. 
At last, in order to prove that  $v_3$ is a sub-solution, we make similar computations as the ones in \eqref{contiale}: in this case also the difference $v_3'(b^-)-v_3'(b^+)$ appears, and this quantity is negative by using again Corollary \ref{risolutivo}, since $\ell-b<b-a$.
\end{proof}
We now turn to the problem of stability/instability of the stationary solutions. Our main result is the following.
\begin{theorem}\label{stabthm}
Let $u$ be the solution to \eqref{prob} with initial datum $u_0$ satisfying 
\begin{equation}\label{conddatoiniziale}
v_3(x) \leq u_0(x) \leq v_1(x) \quad \mbox{for any} \ x \in [0,\ell],
\end{equation}
where $v_1$ and $v_3$ are defined as in Proposition \ref{preliminary1}. Then, for all $x \in [0,\ell]$
\begin{equation}\label{asylimit}
\lim_{t \to + \infty} u(x,t) = u_{+,\e}(x).
\end{equation}
Moreover, one can choose initial data $u_0$ arbitrarily close to $u_{1^-,\e}$ such that the corresponding solution satisfies either \eqref{asylimit} or
\begin{equation*}\label{asylimit2}
\lim_{t \to + \infty} u(x,t) = u_{-,\e}(x).
\end{equation*}
\end{theorem}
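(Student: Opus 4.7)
The strategy is to trap $u(x,t)$ between two monotone trajectories that both converge to $u_{+,\e}$, exploiting the sub-/super-solutions from Proposition \ref{preliminary1}. The first step is an application of the parabolic comparison principle: since $v_1$ is a super-solution with $v_1>0$ at the endpoints and $v_3$ is a sub-solution with $v_3=0$ at the endpoints, and the initial datum satisfies $v_3\leq u_0\leq v_1$, one immediately obtains $v_3(x)\leq u(x,t)\leq v_1(x)$ for every $t\geq 0$. The second step is a monotone semi-flow argument: let $\underline u$ solve \eqref{prob} with initial datum $v_3$ (which already satisfies the homogeneous Dirichlet data); comparing $\underline u(\cdot,t+h)$ with $\underline u(\cdot,t)$ at $t=0$ and using the sub-solution property shows that $\underline u$ is non-decreasing in $t$, and, being bounded above by $v_1$, converges pointwise (and, by standard parabolic estimates, in a stronger topology) to a stationary solution $\underline U$. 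A parallel construction, after regularizing $v_1$ near the boundary to match the Dirichlet data without losing the super-solution character, produces a non-increasing trajectory $\bar u$ with limit $\bar U$, and by comparison $\underline u\leq u\leq \bar u$ throughout.

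The central step is to show that $\underline U=\bar U=u_{+,\e}$. By construction $\underline U\geq v_3>0$ on $(a,b)$, so $\underline U$ is not identically zero and is positive on a subinterval. I would then invoke the classification of stationary solutions: Propositions \ref{positivesolut}, \ref{esistelau-} and \ref{onezerosol} together with Remark \ref{morezeros} enumerate the possible configurations, parametrized by their interior zeros. Any candidate distinct from $u_{+,\e}$ either fails to dominate $v_3$ on $(a,b)$ (since it is negative there, as happens for $u_{-,\e}$ or $u_{1^{\pm},\e}$), or possesses spurious internal zeros in $(0,a)\cup(b,\ell)$ which, by Corollary \ref{risolutivo}, force a jump in the first derivative incompatible with $C^2$ regularity. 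With the identification in hand, $\underline u\leq u\leq\bar u$ yields $u(\cdot,t)\to u_{+,\e}$, proving \eqref{asylimit}.

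For the instability of $u_{1^-,\e}$ it suffices to produce, for every small $\delta>0$, an initial datum $u_0$ within $C^0$-distance $\delta$ of $u_{1^-,\e}$ whose evolution converges to $u_{+,\e}$ and another one converging to $u_{-,\e}$. I would set $u_0 = u_{1^-,\e}+\delta\phi$ with $\phi\geq 0$ a smooth bump supported around a point $x_\star\in(\ell/2,\ell)$ where $u_{1^-,\e}$ is positive. Choosing $a,b$ with $\ell/2\leq a<x_\star<b<\ell$ and $\ell-b<b-a$, and using Proposition \ref{monotonic} to compare $u_{+,\e}(\cdot;a,b)$ with $u_{1^-,\e}|_{[a,b]}=u_{+,\e}(\cdot;\ell/2,\ell)|_{[a,b]}$, one checks that the extra bump $\delta\phi$ pushes $u_0$ above $v_3$ on $[a,b]$, while on the complement $u_0$ dominates $v_3$ automatically; coupling with $u_0\leq v_1$ for sufficiently large $A,B$, the first part of the theorem yields $u(\cdot,t)\to u_{+,\e}$. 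The symmetric construction with $u_0=u_{1^-,\e}-\delta\phi$, $\phi$ supported in $(0,\ell/2)$, combined with the reflected super-/sub-solutions (Remark \ref{rem1}), produces a trajectory converging to $u_{-,\e}$.

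The step I expect to be the main obstacle is the identification $\underline U=u_{+,\e}$: one must rule out every concatenated stationary configuration compatible with the order interval $[v_3,v_1]$, and the bookkeeping based on Corollary \ref{risolutivo} can become intricate when more than one auxiliary zero is admitted in $(0,a)\cup(b,\ell)$. A cleaner alternative, should the direct ODE case analysis become unwieldy, would be to invoke Matano-Henry type zero-number results for scalar semilinear parabolic equations, which imply that $\omega$-limit sets of monotone trajectories are singletons, and then to use the positivity constraint $\underline U\geq v_3>0$ on $(a,b)$ to conclude by uniqueness of the positive steady state.
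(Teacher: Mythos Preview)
Your argument for the first part---trapping $u$ between the monotone trajectories launched from $v_3$ and $v_1$ and identifying both limits with $u_{+,\e}$---is essentially the paper's proof, and your discussion of the identification step (ruling out the other concatenated steady states) is a reasonable expansion of what the paper condenses into the remark ``if $a$ and $\ell-b$ are small enough, there are no other stationary solutions greater than $v_3$''.

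The instability argument, however, has a genuine gap. Proposition~\ref{preliminary1} requires \emph{both} $a\le b-a$ and $\ell-b<b-a$ for $v_3$ to be a weak sub-solution (the first inequality is what makes the kink at $x=a$ point the right way, via Corollary~\ref{risolutivo}). Your choice $\ell/2\le a<b<\ell$ forces $b-a<\ell/2\le a$, so $a\le b-a$ fails and $v_3$ is \emph{not} a sub-solution; the first part of the theorem therefore cannot be invoked. Nor can this be repaired by reverting to a valid $v_3$ with $a$ small: for any $a<\ell/2$ one has $v_3|_{(0,a)}=u_{-,\e}(\cdot;0,a)>u_{-,\e}(\cdot;0,\ell/2)=u_{1^-,\e}|_{(0,a)}$ (shorter interval $\Rightarrow$ less negative), so a bump $\delta\phi$ supported in $(\ell/2,\ell)$ cannot make $u_{1^-,\e}+\delta\phi\ge v_3$ near $x=0$. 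In short, no admissible $v_3$ sits below a small one-sided perturbation of $u_{1^-,\e}$.

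The paper avoids this obstruction by not trying to sandwich a perturbation of $u_{1^-,\e}$ at all. It takes $v_2$ from Proposition~\ref{preliminary1} with $b=\ell$ and $a$ slightly below $\ell/2$ as the initial datum itself: then $a\le\ell-a$ holds, $v_2$ is a sub-solution, Proposition~\ref{monotonic} gives $v_2\ge u_{1^-,\e}$ with $v_2\to u_{1^-,\e}$ as $a\to(\ell/2)^-$, and the monotone increasing trajectory from $v_2$ converges to $u_{+,\e}$. Taking instead $a$ slightly above $\ell/2$ flips the kink at $x=a$ and turns $v_2$ into a super-solution below $u_{1^-,\e}$, whose trajectory decreases to $u_{-,\e}$. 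This gives initial data arbitrarily close to $u_{1^-,\e}$ on both sides without ever needing to fit a $v_3$ underneath.
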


\begin{proof}
We start by proving the stability of $u_{+,\e}$; let $v_1(x,t)$ be the solution to \eqref{prob} arising from $u_0(x)=v_1(x)$, with $v_1$ being defined as in Proposition \ref{preliminary1}.  Notice that, by \eqref{propr-sol-pos}, we have $v_1(x) >u_{+,\e}(x)$ for all $x \in [0,\ell]$, so that, by using the comparison principle, we deduce that $v_1(x,t) >u_{+,\e}(x)$ for all $x \in [0,\ell]$ and $t >0$. 
Recalling that  solutions to \eqref{prob} with a sub- or a super-solution as initial datum converge monotonically to a stationary solution $\bar u$ as $t \to +\infty$ (see \cite{BKS} for the linear diffusion case with $f(u)=u^2/2$), we infer that $v_1(x,t)$ decreases in time to a stationary solution to \eqref{prob}, and the limit function as $t \to +\infty$ has to be $u_{+,\e}(x)$, that is the unique positive solution to \eqref{stationaryprob}.
Similarly, if $v_3(x,t)$ is the solution to \eqref{prob} with initial datum  $u_0(x)=v_3(x)$, then $v_3(x,t) <u_{+,\e}(x)$ for all $x \in [0,\ell]$ and $t >0$ and, as above, we can conclude that  $v_3(x,t)$ increases in time to $u_{+,\e}(x)$ as $t \to +\infty$; indeed, if $a$ and $\ell-b$ are small enough, there are no other stationary solutions greater then $v_3$.
Collecting the previous results, if the initial datum satisfies \eqref{conddatoiniziale}, then the corresponding solution satisfies 
$$v_3(x,t) \leq u(x,t) \leq v_1(x,t),$$
 implying  \eqref{asylimit}, and the first part of the proof is complete.

In order to prove the second part of the theorem, we consider $v_2$ defined as in  Proposition \ref{preliminary1}; at first we choose  $0<a\leq\frac{\ell}{2}$ and $b=\ell$, and we notice that, because of Proposition \ref{monotonic}, we have that $u_{1^-,\e}(x) \leq v_2(x)$ for all $x \in [0,\ell]$. Hence, reasoning as before, the solution $v_2(x,t)$ to \eqref{prob} with initial datum $u_0(x)=v_2(x)$ increases in time to $u_{+,\e}(x)$ as $t \to +\infty$.  Analogously, if $a \geq  \frac{\ell}{2}$, then $u_{1^-,\e}(x) \geq v_2(x)$ for all $x \in [0,\ell]$ and the solution $v_2(x,t)$ decreases in time to $u_{-,\e}(x)$ as $t \to +\infty$.
This proves that  $u_{1^-,\e}$ is unstable, since $v_2$ can be chosen arbitrarily close to $u_{1^-,\e}$ if $a \to \frac{\ell}{2}^{\pm}$. 
\end{proof}

\begin{remark}\label{INSTAB}
The stability of $u_{-,\e}$ and the instability of $u_{1^+,\e}$ can be proven in a completely symmetric way. 
\end{remark}

\section{Finite time behavior of solutions}\label{hyperbolic}
In this section it is convenient to reframe our problem on the whole real line, in order to take advantage of some known results. Let us consider the strip $S:= \{(x,t) \, : \, x \in \mathbb{R}, t \geq 0\}$, and let us extend the function $u_0$ on the whole real line by disparity and periodicity: precisely, we require $u_0$ to be odd about $x=0$, that is we extend the function in $(-\ell,0)$ by setting $u_0(x)=-u_0(-x)$, and after that we consider the function for all $x \in \mathbb{R}$ by letting $u_0(x)=u_0(x+2\ell)$. Let now $\bar u_\e$ be the solution to
\begin{equation}\label{prob3}
\begin{cases} u_t=\varepsilon(h(u)u_x)_x -f(u)_x + f'(u), \qquad    & x \in S, \\
u(x,0) = u_0(x), & x \in \mathbb{R}. 
\end{cases}	
\end{equation}
Problem \eqref{prob3} admits a unique solution, implying that $\bar u_\e(0,t)=\bar u_\e(\ell,t)=0$; hence $\bar u_\e(x,t) \equiv u_\e(x,t)$ for $x \in [0,\ell]$, where $u_\e$ is the solution to the original problem \eqref{prob}. From now on we will thus use $u_\e$ instead of $\bar u_\e$.

It is well known (see \cite{Nessy,Ole}) that for the solution to \eqref{prob3} it holds that
\begin{equation*}
\lim_{\e \to 0} u_\e(x,t) = U(x,t),
\end{equation*}
where $U$ is the entropy solution of the hyperbolic equation
\begin{equation}\label{prob3hyp}
U_t+f(U)_x-f'(U)=0, \qquad U(x,0)=u_0(x),
\end{equation}
and the convergence is in $L^1_{\mathrm{loc}}$; precisely, the following results was proved in \cite{Nessy}.
\begin{prop}\label{prop14}
Let $u_\e$ be the solution to \eqref{prob3} and suppose that $u_0 \in L^\infty(\mathbb{R})$ and that there exists a finite $M>0$ such that
\begin{equation*}
\frac{u_0(x)-y_0(y)}{x-y} \leq M, \qquad \mbox{for all} \quad x \neq y.
\end{equation*}
Then, as $\e \to 0$ and away from shock waves, $u_\e \to U$ in $C^0_{loc}$, where $U$ is the entropy solution to \eqref{prob3hyp} in $S$.
\end{prop}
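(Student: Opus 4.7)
The plan is to prove this vanishing viscosity result via Kruzhkov's entropy method, adapted to the balance law \eqref{prob3hyp} that includes the reaction term $f'(U)$. First I would establish uniform a priori bounds on $u_\varepsilon$ independent of $\varepsilon$. An $L^\infty$ bound follows from the parabolic maximum principle applied to \eqref{prob3}: since $u_0 \in L^\infty(\mathbb{R})$ and the source $f'(u)$ is locally Lipschitz, a Gronwall comparison with ODE solutions of $\dot{y} = f'(y)$ gives $\|u_\varepsilon(\cdot,t)\|_\infty \le C(T)$ uniformly for $t \in [0,T]$. Next I would derive a one-sided Lipschitz (Oleinik-type) estimate. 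Differentiating \eqref{prob3} in $x$ and letting $w = u_{\varepsilon,x}$, one obtains an equation of the form
\begin{equation*}
w_t + f'(u_\varepsilon) w_x = \varepsilon (h(u_\varepsilon) w_x)_x - f''(u_\varepsilon) w^2 + f''(u_\varepsilon) w + (\text{terms in } h', h'').
\end{equation*}
Using convexity $f'' > 0$ and the initial bound $w(\cdot,0) \le M$, a Riccati-type comparison yields $u_{\varepsilon,x}(x,t) \le M(t)$ uniformly in $\varepsilon$, with $M(t)$ finite for all $t \ge 0$. This estimate is crucial both for compactness and for selecting the Oleinik-admissible entropy solution in the limit.

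With these bounds in hand, $\{u_\varepsilon\}$ is locally BV in $x$ (the one-sided Lipschitz bound plus $L^\infty$ gives total variation control on compacts), and $L^1_{\mathrm{loc}}$ equicontinuity in $t$ follows by testing \eqref{prob3} against smooth compactly supported functions and using the $L^\infty$ bounds. Helly's theorem then provides a subsequence $u_{\varepsilon_k} \to U$ in $L^1_{\mathrm{loc}}(S)$ and a.e. To pass to the limit in Kruzhkov's formulation, multiply \eqref{prob3} by $\eta'(u_\varepsilon)$ for a convex entropy $\eta$ with associated flux $q$ defined by $q'=\eta' f'$:
\begin{equation*}
\partial_t \eta(u_\varepsilon) + \partial_x q(u_\varepsilon) - \eta'(u_\varepsilon) f'(u_\varepsilon) = \varepsilon\,\partial_x\!\bigl(h(u_\varepsilon)\eta'(u_\varepsilon) u_{\varepsilon,x}\bigr) - \varepsilon\, h(u_\varepsilon)\eta''(u_\varepsilon) u_{\varepsilon,x}^2.
\end{equation*}
The last term is nonpositive since $h\ge h_0>0$ and $\eta''\ge 0$; the penultimate term vanishes in $\mathcal{D}'$ because $\sqrt{\varepsilon}\,u_{\varepsilon,x}$ is bounded in $L^2_{\mathrm{loc}}$ (from the standard energy estimate). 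The limit $U$ therefore satisfies Kruzhkov's entropy inequality for the balance law \eqref{prob3hyp}, and the Kruzhkov uniqueness theory (extended to balance laws with locally Lipschitz source) identifies $U$ as the unique entropy solution; consequently the whole family, not merely a subsequence, converges.

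To upgrade $L^1_{\mathrm{loc}}$ convergence to $C^0_{\mathrm{loc}}$ away from shocks, I would use that the Oleinik-type bound inherited by $U$ forces $U$ to be continuous outside its (closed) shock set $\Sigma$; indeed $U$ is one-sided Lipschitz in $x$ and, via the equation, locally Lipschitz in $t$ off $\Sigma$. On any compact $K \subset S \setminus \Sigma$ the family $\{u_\varepsilon\}$ is equicontinuous: spatial equicontinuity follows from the one-sided Lipschitz estimate coupled with the matching lower bound extracted from the continuity of $U$ on $K$ (using that $u_\varepsilon\to U$ a.e.\ and both satisfy the same one-sided estimate), and temporal equicontinuity follows by integrating the PDE against smooth test functions and using the uniform bounds. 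Ascoli--Arzel\`a then turns the a.e.\ convergence into uniform convergence on $K$. The principal obstacle is the one-sided Lipschitz estimate itself: the Riccati comparison must absorb the extra contributions from the nonlinear diffusion (terms in $h'(u_\varepsilon)$ and $h''(u_\varepsilon)$) and the linear-in-$w$ term $f''(u_\varepsilon)w$ coming from differentiating the reaction, and one has to verify that the resulting upper envelope $M(t)$ does not blow up in finite time on the interval of interest.
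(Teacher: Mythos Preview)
The paper does not prove this proposition at all: it is quoted verbatim as a known result from \cite{Nessy} (Nessyahu), with the sentence ``precisely, the following results was proved in \cite{Nessy}'' immediately preceding the statement. There is therefore no proof in the paper to compare your attempt against.

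That said, your outline is a reasonable sketch of the standard vanishing-viscosity argument (uniform $L^\infty$ bound, Oleinik one-sided estimate, BV compactness, Kruzhkov entropy inequality in the limit, uniqueness, then Ascoli--Arzel\`a on the complement of the shock set). You correctly flag the main technical point that is genuinely nontrivial here: the Riccati comparison for $w=u_{\varepsilon,x}$ must absorb the terms generated by the nonlinear diffusion $h(u)$, and this is where the argument could fail if not handled carefully. Note also that your equicontinuity argument for the upgrade to $C^0_{\mathrm{loc}}$ is somewhat circular as written --- you invoke ``the matching lower bound extracted from the continuity of $U$ on $K$'' to get spatial equicontinuity of $u_\varepsilon$, but a one-sided Lipschitz bound alone does not give equicontinuity; one typically needs a genuine two-sided modulus, which is usually obtained by combining the one-sided bound with the $L^1_{\mathrm{loc}}$ convergence and the local regularity of $U$ away from shocks in a more careful way (or by appealing directly to the structure of characteristics in smooth regions). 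If you intend to supply a proof rather than cite one, that step would need tightening.
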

Finally, $U$ (as well of $u_\e$) satisfies the boundary conditions $U(0,t)=U(\ell,t)=0$, and therefore solves the boundary value problem. 

\subsection{Asymptotic behavior for the hyperbolic problem} 
As stated in the introduction, we differentiate our results depending on the form of the initial datum; we consider three different types of data.

\begin{itemize}
\item Type A: $u_0 >0$ (or, equivalently $u_0<0$) in $(0,\ell)$.
\item Type B:  $u_0 <0$ in $(0,x_0)$ and $u_0>0$ in $(x_0,\ell)$ for some $x_0 \in (0,\ell)$.
\item Type C: $u_0 >0$ in $(0,x_0)$ and $u_0<0$ in $(x_0,\ell)$ for some $x_0 \in (0,\ell)$.
\end{itemize}

The asymptotic time behavior of the solutions to \eqref{prob3hyp} has been studied in \cite{lyberopoulos}; we recall the version of the result we will use here, that concerns initial data which change sign at most once.
\begin{prop}
Suppose that $u_0$ change sign at most once in $(0,\ell)$. Then the limiting behavior for $t \to +\infty$ of $U$, solution to \eqref{prob3hyp}, is given by one of the following alternatives:
\begin{equation*}
\begin{aligned}
(i) \, \, U(x,t) &\to x-x_0\   \mbox{as}\  t \to +\infty, \quad \mbox{for some} \  x_0 \in (0,\ell); \\
(ii) \,  \, U(x,t) &\to x \   \mbox{as}\  t \to +\infty;\\
(iii) \,  \, U(x,t) &\to x -1\   \mbox{as}\  t \to +\infty;\\
(iv) \,  \, U(x,t) &\to x \, \chi_{\left[0,\frac{\ell}{2} \right)} + (x-\ell) \chi_{\left(\frac{\ell}{2},\ell \right]}\   \mbox{as}\  t \to +\infty.
\end{aligned}
\end{equation*}
\end{prop}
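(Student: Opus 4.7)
The plan is to use the method of characteristics for the balance law, exploiting that the equation can be written as $U_t+f'(U)U_x=f'(U)$ where $f'$ is strictly increasing by \eqref{segnodif} and vanishes only at $0$. Along a characteristic $x(t)$ with $\dot x=f'(U)$, the unknown $U$ satisfies the autonomous ODE $\dot U=f'(U)$, whose unique equilibrium $U=0$ is \emph{unstable} because $f'(U)U>0$ by \eqref{segnodif1}. A direct consequence is $\dot x-\dot U=0$, so $x-U$ is preserved along every characteristic. Hence each characteristic starting at $(y,0)$ carries a value $U(t)$ with $|U(t)|\to\infty$ (in finite or infinite time depending on $f$) and traces the curve $x(t)=y-u_0(y)+U(t)$, which must exit the strip $[0,\ell]\times[0,\infty)$ through one of the lateral boundaries as soon as $|U(t)|$ exceeds $\mathrm{dist}(y,\partial[0,\ell])$.

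Next I would identify the two basic long-time mechanisms. First, any $x_0\in[0,\ell]$ with $u_0(x_0)=0$ produces a vertical characteristic $\{x_0\}\times[0,\infty)$ on which $U\equiv 0$; characteristics starting just to either side of $x_0$ form a rarefaction fan inside which $x-U\equiv x_0$, so $U(x,t)=x-x_0$ identically once the fan is fully developed. Second, when characteristics carrying values of opposite sign collide they generate an entropic shock whose location $s(t)$ obeys the Rankine--Hugoniot condition $\dot s=(f(U(s^-,t))-f(U(s^+,t)))/(U(s^-,t)-U(s^+,t))$, with the Lax inequality selecting the admissible jump. Combined with the boundary data $U(0,t)=U(\ell,t)=0$, which force a fan at any boundary point reached by an exiting characteristic pattern, these two ingredients suffice to describe $U$ globally.

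Then I would run the case analysis from the three types of initial data. For type A with $u_0>0$, every characteristic has $f'(U)>0$ and leaves through $x=\ell$, while the boundary $x=0$ emits a rarefaction fan that sweeps across $(0,\ell)$ in finite time; the fan satisfies $x-U\equiv 0$, yielding alternative (ii). Type A with $u_0<0$ is symmetric and yields (iii). For type B, negative characteristics in $(0,x_0)$ move left and exit through $x=0$ while positive ones in $(x_0,\ell)$ exit through $x=\ell$, so the fan issuing from $(x_0,0)$ asymptotically occupies all of $(0,\ell)$, giving alternative (i). Type C is the richest: at $t=0$ a shock forms at $x_0$ between a positive left state and a negative right state. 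Using the evenness $f(-u)=f(u)$ from \eqref{segnodif} together with the Rankine--Hugoniot formula, one checks that the shock is stationary precisely when the two one-sided traces are antipodal, which via $x-U=\mathrm{const}$ pins the shock to $x=\ell/2$; otherwise the shock drifts monotonically to one end, after which a single boundary fan fills the interval and gives (ii) or (iii). The three subcases $x_0>\ell/2$, $x_0=\ell/2$, $x_0<\ell/2$ yield respectively (ii), (iv), (iii).

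The main difficulty is the shock analysis in type C: one must verify that the shock does not disappear or split before asymptotic time, control the left and right traces of $U$ along it through the characteristic representation, and show that the symmetric profile (iv) is actually attained rather than merely approached (the $x_0=\ell/2$ case is a non-generic stable manifold of the shock dynamics, which is exactly why it appears as a separate alternative). Handling the boundary conditions in the entropy-solution sense, and upgrading pointwise convergence away from the shock locus to the statement in the proposition, are additional technical steps but follow from standard BV stability estimates for scalar balance laws, in the spirit of the Poincar\'e--Bendixson argument of \cite{lyberopoulos}.
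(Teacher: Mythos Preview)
The paper does not prove this proposition: it is quoted from \cite{lyberopoulos} with the preamble ``The asymptotic time behavior of the solutions to \eqref{prob3hyp} has been studied in \cite{lyberopoulos}; we recall the version of the result we will use here.'' There is therefore no proof in the paper to compare against; you are supplying an argument where the authors simply invoke the literature.

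Your characteristic-based sketch is the right framework and is in the spirit of \cite{lyberopoulos}. The observation that $x-U$ is conserved along characteristics, the identification of rarefaction fans at zeros of $u_0$ (and at the boundary points), and the treatment of types A and B are all sound; your handling of type B is essentially the content of the lemma the paper proves immediately after this proposition.

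There is, however, a genuine slip in your type C classification. You assert that the three subcases $x_0>\ell/2$, $x_0=\ell/2$, $x_0<\ell/2$ yield (ii), (iv), (iii) respectively, i.e.\ that the asymptotic alternative is determined solely by the location of the sign change. This is not correct in general. For $f(u)=u^2/2$ the substitution $U=e^tW(x,e^t-1)$ reduces the problem to the inviscid Burgers equation $W_\tau+WW_x=0$ with the same Dirichlet conditions, for which $\int_0^\ell W\,dx$ is conserved. In the asymptotic two-fan regime with shock at $s(\tau)$ one computes $\int_0^\ell W\,dx=\ell\bigl(s-\tfrac{\ell}{2}\bigr)/\tau$, so the direction in which the shock drifts is governed by the sign of $\int_0^\ell u_0\,dx$, not by $x_0$ alone. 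A type C datum with $x_0<\ell/2$ but large positive mass on $(0,x_0)$ can still produce alternative (ii). This does not affect the proposition itself, which only claims that one of (i)--(iv) occurs, but your reasoning ``$f$ even forces antipodal traces to pin the shock at $\ell/2$ iff $x_0=\ell/2$'' conflates the initial shock location with its asymptotic fate and should be replaced by an invariant-based argument.
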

Thanks to the previous proposition, we can prove the following result.
\begin{theorem}\label{th:asyhyp}
Let $U$ be the solution to \eqref{prob3hyp}. 
\begin{itemize}
\item If $u_0$ is of type A, then 
$$\lim_{t \to +\infty} U(x,t)=\begin{cases}  x, \quad    &\mbox{if} \quad  u_0 >0,  \\
 x-\ell,\quad &\mbox{if}  \quad u_0 <0,
 \end{cases} 
 $$
 uniformly on compact sets of $[0,\ell)$ and $(0,\ell]$ respectively.
 \item If $u_0$ is of type B, then 
 $$\lim_{t \to \infty} U(x,t)=x-x_0,$$
 uniformly on compact sets of $(0,\ell)$.
 \item If $u_0$ is of type C, then
 $$\lim_{t \to +\infty} U(x,t)=\begin{cases}  x, \qquad  \;  &  x_0 \in (0,\ell/2),  \\
 x\, \chi_{\left[0,\frac{\ell}{2} \right)} + (x-\ell) \chi_{\left(\frac{\ell}{2}, \, \ell \right]}, & x_0 =\frac{\ell}{2}, \\
 x-\ell,  &x_0 \in (\ell/2,\ell),
 \end{cases}
 $$
uniformly on compact sets of $[0,\ell)$,  $[0,\ell] \setminus  \left\{ \frac{\ell}{2}\right\} $ and $(0,\ell]$ respectively. 
 \end{itemize}
\end{theorem}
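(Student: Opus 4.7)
The starting point is the four-alternative dichotomy provided by the previous proposition: the limit $\lim_{t\to+\infty}U(x,t)$, whenever it exists, coincides with one of $x-x_0$, $x$, $x-\ell$, or the hybrid $x\chi_{[0,\ell/2)}+(x-\ell)\chi_{(\ell/2,\ell]}$. My plan is therefore, for each class of initial data, to single out which of these four candidates is actually selected, essentially by tracking the characteristics of \eqref{prob3hyp}, which obey $\dot x=f'(U)$, $\dot U=f'(U)$.

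For type A, I would invoke the fact that the sign of $U$ is preserved: since $\dot U=f'(U)$ has $U=0$ as an unstable equilibrium and, by \eqref{segnodif1}, $f'(u)$ shares the sign of $u$, if $u_0>0$ on $(0,\ell)$ then $U(x,t)>0$ for every $x\in(0,\ell)$ and $t\geq 0$. Among the four alternatives, the only one nonnegative on $[0,\ell)$ is $U=x$, whence $U\to x$; uniform convergence on compact subsets of $[0,\ell)$ follows from the explicit characteristic flow together with a Dini-type argument. The case $u_0<0$ is entirely symmetric.

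For type B, since $u_0<0$ to the left of $x_0$ and $u_0>0$ to the right, the characteristic speed $f'(u_0)$ has the same sign as $u_0$, so characteristics diverge from $x_0$; in particular the characteristic issuing from $(x_0,0)$ carries $U\equiv 0$ and travels at $f'(0)=0$, hence remains at $x=x_0$ for all times. This forces $U(x_0,t)=0$ for every $t\geq 0$, and among the four alternatives only $x-x_0$ is compatible with this constraint together with the sign pattern of $U$; the announced uniform convergence on compact subsets of $(0,\ell)$ follows as in type A.

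For type C the two sides of $x_0$ push characteristics toward each other, so a shock emerges at $x_0$; its position $\bar x(t)$ obeys the Rankine--Hugoniot condition
\[
\dot{\bar x}=\frac{f(U_L)-f(U_R)}{U_L-U_R},
\]
and in the asymptotic regime the boundary rarefactions (anchored at $x=0$ and $x=\ell$ by the symmetrization used to set up \eqref{prob3}) give $U_L\approx\bar x$ and $U_R\approx\bar x-\ell$. Since $f'$ is odd by \eqref{segnodif}, $f$ is even, and strict convexity with $f(0)=f'(0)=0$ yields $\dot{\bar x}=[f(\bar x)-f(\ell-\bar x)]/\ell$, whose sign is that of $\bar x-\ell/2$; hence $\ell/2$ is an \emph{unstable} equilibrium of the shock ODE, and depending on whether $x_0$ lies above, at, or below $\ell/2$, the shock is either driven to an endpoint in finite time -- in which case the surviving rarefaction fills the whole interval and gives $x$ or $x-\ell$ -- or it stays pinned at $\ell/2$ and yields the hybrid profile. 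The main obstacle will lie precisely in this type-C analysis: one has to justify rigorously the effective expressions for $U_L,U_R$ plugged into Rankine--Hugoniot throughout the transient (not only at infinity) and describe carefully the absorption of the shock by a boundary rarefaction, so that the claimed uniform convergence on the announced compact subsets is actually attained.
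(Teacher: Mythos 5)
Your treatment of types A and B is essentially the paper's own proof: the paper likewise reduces everything to the four-alternative classification of the preceding proposition, disposes of type A by sign preservation (it invokes the maximum principle), and for type B proves precisely your observation as a separate lemma --- the characteristic through $(x_0,0)$ is vertical because $f'(0)=0$, and by \eqref{segnodif1} any shock in $(0,x_0)$ can only move left while any shock in $(x_0,\ell)$ can only move right, so $U(x_0,t)=0$ for all $t$, $U$ stays continuous near $x_0$, and only the candidate $x-x_0$ survives. Where you genuinely add something is type C: the paper offers no argument for this case beyond the citation, whereas you derive the shock ODE $\dot{\bar x}=[f(\bar x)-f(\ell-\bar x)]/\ell$ from Rankine--Hugoniot and the evenness of $f$, identifying $\ell/2$ as an unstable equilibrium. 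Two remarks on that. First, your conclusion --- the shock drifts away from $\ell/2$, so $x_0>\ell/2$ yields $U\to x$ and $x_0<\ell/2$ yields $U\to x-\ell$ --- is the correct one and agrees with the paper's introduction and with its numerical Test 2 ($x_0=0.45$ gives convergence to the negative state, $x_0=0.55$ to the positive one); the conditions $x_0\in(0,\ell/2)$ and $x_0\in(\ell/2,\ell)$ as printed in the theorem appear to be swapped. Second, the gap you flag is real: the identification $U_L\approx\bar x$, $U_R\approx\bar x-\ell$ holds only asymptotically, and during the transient the shock speed has the sign of $|U_L|-|U_R|$, which depends on the shape of $u_0$ and not merely on $x_0$, so one must still show the shock cannot cross $\ell/2$ before the rarefaction profiles develop (or fall back on the cited classification of Lyberopoulos). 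Since the paper does not close this gap either, your proposal is at least as complete as the source.
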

While case A directly follows by using the maximum principle, for the proof of case B the following lemma is required.
\begin{lemma}
Let $U:=U(t,x)$ be the solution to problem \eqref{prob3hyp} with initial datum $u_0$ of type B.
Then $U(t,x_0)=0$ for all $t\geq 0$ and $U(t,x)$ is continuous for all $t>0$ and for all $x\in (x_0-\gamma, x_0+\gamma)$ for some sufficiently small $\gamma>0$. 
\end{lemma}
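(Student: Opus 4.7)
The plan is to work with the characteristic system of the equation, written as $U_t+f'(U)U_x=f'(U)$: setting $\dot x=f'(U)$ makes $\dot U=f'(U)$ as well, so $U-x$ is conserved along each characteristic and $U$ obeys the autonomous ODE $\dot U=f'(U)$, whose only zero in $[-R,R]$ is $U=0$ by \eqref{segnodif}. For the identity $U(t,x_0)=0$, I would argue as follows. Continuity of $u_0$ together with the sign change at $x_0$ forces $u_0(x_0)=0$, so the characteristic launched from $(x_0,0)$ starts at $U_0=0$; since $f'$ is $C^1$, the unique Picard--Lindel\"of solution of $\dot x=\dot U=f'(U)$ with $(x(0),U(0))=(x_0,0)$ is the constant trajectory $x(s)\equiv x_0$, $U(s)\equiv 0$. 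To transfer this from a characteristic statement to a statement about the entropy solution, I would then verify that no shock wave meets the line $\{x=x_0\}$: assumption \eqref{segnodif1} gives $f'(u_0(\xi))<0$ on a left neighborhood of $x_0$ and $f'(u_0(\xi))>0$ on a right neighborhood, so characteristic speeds point outward on both sides of $x_0$; this is a rarefaction, not a compression, and no shock can form at $x=x_0$ for positive times. Hence the entropy value at $(t,x_0)$ agrees with the classical characteristic value $0$.

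For continuity of $U$ on a strip $(x_0-\gamma,x_0+\gamma)\times(0,T)$ I would pass through the viscous approximations $u_\e$ of \eqref{prob3}. Each $u_\e$ is a classical $C^2$ (hence continuous) solution and, by Proposition \ref{prop14}, the convergence $u_\e\to U$ is in $C^0_{\mathrm{loc}}$ on any open set disjoint from shock waves. The rarefaction structure detailed above supplies such a shock-free two-sided neighborhood of $\{x=x_0\}$ for some $\gamma,T>0$, so $U$ is the local uniform limit of continuous functions on that neighborhood and therefore continuous there; in particular the value at $(t,x_0)$ is $0$, consistent with the first part of the lemma.

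The main technical obstacle is making the shock-free statement rigorous, since this is really the heart of the lemma. In the regular case $u_0\in C^1$ with $u_0'(x_0)>0$ one can make it explicit by analyzing the characteristic map $\xi\mapsto x(t,\xi)=\xi+\Phi_t(u_0(\xi))-u_0(\xi)$, where $\Phi_t$ denotes the flow of $\dot U=f'(U)$: because $f''(0)>0$ one has $\Phi'_t(0)=e^{f''(0)t}>1$, so $\partial_\xi x=1+(\Phi'_t(u_0(\xi))-1)u_0'(\xi)>0$ near $\xi=x_0$, the implicit function theorem produces a local diffeomorphism, and the classical representation $U(t,x)=\Phi_t(u_0(\xi(t,x)))$ gives both continuity and the identity $U(t,x_0)=0$. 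For merely continuous $u_0$ one has to rule out a shock forming near $x_0$ by a different route, e.g.\ by invoking the Oleinik-type one-sided Lipschitz estimate on $u_\e$ that survives in the vanishing viscosity limit; this technical passage is where the genuine work lies.
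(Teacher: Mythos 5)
Your overall strategy is the same as the paper's: follow the characteristic system, observe that $f'(z)=0$ iff $z=0$ so the characteristic launched from $(0,x_0)$ is vertical and carries the value $0$, and then reduce everything to showing that no shock wave ever reaches the line $x=x_0$. The first part of your argument is fine. The problem is the shock-exclusion step, and you are right to flag it as ``where the genuine work lies'' --- but you then leave it open, deferring to an unspecified Oleinik-type one-sided Lipschitz route. Your local rarefaction argument (characteristic speeds point outward on the two sides of $x_0$) only rules out a shock \emph{forming at} $x_0$; it does not rule out a shock forming somewhere else in $(x_0,\ell)$ or $(0,x_0)$ --- which can certainly happen, since type B data are not assumed monotone away from $x_0$ --- and subsequently propagating into the neighborhood of $x_0$. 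As stated, this is a genuine gap.

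The paper closes that gap with an argument that is much more elementary than the Oleinik estimate you gesture at: by Rankine--Hugoniot, a shock has speed $\frac{ds}{dt}=\frac{f(u_2)-f(u_1)}{u_2-u_1}=f'(\theta)$ for some $\theta$ between the two shock states. On $(x_0,\ell)$ the solution stays nonnegative (sign preservation, which your own characteristic picture already gives since $\dot U=f'(U)$ pushes $U$ away from $0$), so by \eqref{segnodif1} any shock there has nonnegative speed and moves \emph{away} from $x_0$ to the right; symmetrically, shocks in $(0,x_0)$ move left. Hence no shock, wherever it forms, can reach $x=x_0$, which gives both $U(t,x_0)=0$ and a shock-free two-sided neighborhood on which continuity follows (by the classical characteristic representation or, as you propose, via the $C^0_{\mathrm{loc}}$ convergence of $u_\e$ away from shocks --- either works once the shock-free region is established). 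I would recommend replacing your ``rarefaction, not compression'' paragraph and the closing caveat with this Rankine--Hugoniot sign argument; the rest of your write-up, including the explicit flow-map computation $\partial_\xi x=1+(\Phi_t'(u_0(\xi))-1)u_0'(\xi)>0$ in the smooth case, is correct but not needed once shocks are excluded from a neighborhood of $x_0$.
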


\begin{proof}
This result is a generalization of \cite[Lemma 7.4]{BKS} and it is a consequence of the assumptions \eqref{segnodif} (in particular, it follows from \eqref{segnodif1}).
Let $U:=U(t,x)$ be the solution to problem \eqref{prob3hyp} with initial datum $u_0$ of type B;
denoting by $z(\tau,s):=U(t(\tau,s),x(\tau,s))$, we obtain the following system for the characteristic curves
\begin{equation*}
	\begin{cases}
		\frac{dt}{d\tau}=1,\\
		\frac{dx}{d\tau}=f'(z),\\
		\frac{dz}{d\tau}=f'(z),
	\end{cases}
	\qquad \qquad
	\begin{cases}
		t(0,s)=0,\\
		x(0,s)=s,\\
		z(0,s)=u_0(s).
	\end{cases}
\end{equation*}
Since $f'(z)=0$ if and only if $z=0$ and $u_0(x_0)=0$, the characteristic curve which starts at $(0,x_0)$ is vertical.
Moreover, the only reason for $U(t,x_0)$ to be different from zero may be the appearance of a shock wave at $x=x_0$.
However, notice that the velocity of the shock is given by 
$$\frac{ds}{dt}=\frac{f(u_2)-f(u_1)}{u_2-u_1},$$
where $u_1$ and $u_2$ are the limits of $U$ from both sides of the shock.
Therefore, the sign of $u_0$ and of $f'$ allows us to state that the shock waves may move only to the right if $x\in(x_0,\ell)$ and to the left if $x\in(0,x_0)$.
Hence, we can conclude that no shock wave can reach $x=x_0$ and, as a consequence, $U(t,x_0)=0$ for all $t\geq 0$ and $U(t,x)$ is continuous for all $t>0$
if $|x-x_0|<\gamma$ for some (sufficiently small) $\gamma>0$.
\end{proof}

\subsection{Finite time behavior for the parabolic problem} 
In this section we describe the behavior of the solution $u_\e$ to \eqref{prob} for large (but finite) time.  We state and prove the following theorem: for technical reasons we restrict our analysis to the case $f(u)=u^2/2$. We consider at first the (most) interesting case  of $u_0$ of type B; other types of initial data are described below after the proof.
\begin{theorem}\label{teo:finitetime}
Let $u_\e$ be the solution to \eqref{prob} with initial datum $u_0$ of type B. Then for any $\gamma, \delta>0$ there exist a time  $T=T_{\gamma,\delta}$ and $\bar \e=\bar \e_T$ such that for any $\e \leq \bar \e$ it holds that
\begin{equation}\label{tesi}
\begin{aligned}
& u_\e(x,T) < x-x_0+\delta \quad \mbox{for} \quad x \in  [\gamma,\ell] \\
& u_\e(x,T) > x-x_0-\delta \quad \mbox{for} \quad x \in  [0, \ell-\gamma]. \\
\end{aligned}
\end{equation}
\end{theorem}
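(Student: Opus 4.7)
The plan is to derive \eqref{tesi} by chaining two approximations: at large times the entropy solution $U$ of the hyperbolic limit \eqref{prob3hyp} is close to $x-x_0$, while at small $\e$ the viscous solution $u_\e$ is close to $U$. The restriction $f(u)=u^2/2$ enters only to legitimize the concrete application of Proposition~\ref{prop14} and of case B in Theorem~\ref{th:asyhyp}.

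First, given $\gamma, \delta > 0$, I would use Theorem~\ref{th:asyhyp} (case B) to pick $T = T_{\gamma,\delta}$ so large that
\[
\sup_{x \in [\gamma/2,\, \ell-\gamma/2]} |U(x,T) - (x-x_0)| < \tfrac{\delta}{2}.
\]
Next, since at time $T$ the profile $U(\cdot,T)$ is uniformly close to the continuous function $x-x_0$ on the compact set $K := [\gamma/2,\, \ell-\gamma/2]$, no shock wave is present at time $T$ on $K$; Proposition~\ref{prop14} then furnishes the uniform convergence $u_\e(\cdot,T) \to U(\cdot,T)$ on $K$ as $\e \to 0$. Picking $\bar \e = \bar\e_T$ so that $\sup_{K} |u_\e(\cdot, T) - U(\cdot,T)| < \delta/2$ for $\e \le \bar\e$ and applying the triangle inequality yields
\[
|u_\e(x, T) - (x - x_0)| < \delta \qquad \text{for all } x \in [\gamma/2, \ell-\gamma/2],
\]
which gives both bounds in \eqref{tesi} on the inner interval $[\gamma, \ell-\gamma]$.

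The remaining task is to extend the upper bound from $[\gamma, \ell-\gamma]$ up to $x = \ell$ and, symmetrically, the lower bound down to $x = 0$. For the upper bound, observe that at $x = \ell$ one has $u_\e(\ell, T) = 0 < \ell - x_0 + \delta$, while at $x = \ell-\gamma$ the previous step already gives $u_\e(\ell-\gamma, T) < \ell - \gamma - x_0 + \delta/2$. I would then build on the thin strip $[\ell-\gamma, \ell]\times [T_0, T]$ a barrier from the family of stationary super-solutions $v(x)=Ax+B$ provided by Proposition~\ref{preliminary1}, with $A>1$ and $B>0$ calibrated so that $v$ dominates $u_\e$ on the parabolic boundary of the strip (using the interior convergence bound at the lateral side $x=\ell-\gamma$ for $t$ in a suitable sub-interval) and stays below $x-x_0+\delta$ on the strip itself; the mirror construction with a stationary sub-solution handles the lower bound near $x=0$.

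The main obstacle is precisely this boundary extension: Propositions~\ref{prop14} and \ref{th:asyhyp} are by nature interior statements, so turning them into a uniform-in-$\e$ one-sided bound reaching the endpoints requires barriers that are compatible both with the viscous boundary layer near $x=0,\ell$ and with the asymptotic slope $1$ of the target profile $x-x_0$. Controlling $u_\e(\ell-\gamma, t)$ and $u_\e(\gamma, t)$ for \emph{all} $t$ in a closed sub-interval ending at $T$, not just at $t=T$, is the technical price of this extension, and the simplification $f(u)=u^2/2$ is what makes the relevant affine barriers manageable in closed form.
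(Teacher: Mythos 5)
The first half of your argument (interior estimate via Theorem \ref{th:asyhyp} followed by Proposition \ref{prop14} and the triangle inequality on $[\gamma,\ell-\gamma]$) is exactly what the paper does. The gap is in your boundary extension, and it is a real one, for two reasons. First, the stationary super-solutions $v_1(x)=Ax+B$ of Proposition \ref{preliminary1} require $A>1$ and $B>0$, hence satisfy $v_1(x)>x\geq x-x_0+\delta$ as soon as $\delta\leq x_0$; so no member of that family can simultaneously dominate $u_\e$ and lie below the target line $x-x_0+\delta$, and the comparison principle applied to such a barrier certifies nothing about the desired upper bound. (Relaxing to $B<0$ does not help: the super-solution inequality for $Ax+B$ needs $Ax+B\geq 0$, and on the region where it holds you still need slope $A>1$, which again forces the barrier above $x-x_0+\delta$ wherever it touches that line from below.) Second, even with a more flexible barrier, the bottom edge $[\ell-\gamma,\ell]\times\{T_0\}$ of your thin strip is exactly where you have no information on $u_\e$ beyond the a priori bound $|u_\e|\leq R$: the interior convergence results are of no use there, and a barrier dominating $R$ on that edge cannot stay below $x-x_0+\delta$. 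You flag the need to control $u_\e(\ell-\gamma,t)$ on a whole time interval as a ``technical price,'' but the proposal does not pay it, and the stationary-affine route cannot.

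The paper avoids both problems by working with \emph{time-dependent} barriers defined on the full half-intervals $(0,x_0)$ and $(x_0,\ell)$ starting from $t=0$, namely $V(x,t)=\alpha(x-x_0\mp\delta/2)e^t/(\alpha e^t+1-\alpha)$ with $\alpha>1$ large. For $f(u)=u^2/2$ the residual of such a function in the parabolic equation reduces to $-\e h'(V)V_x^2$, which has a definite sign on each half-interval because $V$ does (assumption \eqref{segnodih}); at $t=0$ the barrier is steep enough to be compared with $u_0$, at the lateral boundaries it is compared with $u_\e(0,t)=u_\e(\ell,t)=0$ and with $u_\e(x_0,t)$, which stays within $\delta/2$ of zero for $\e$ small, and as $t\to+\infty$ it relaxes to $x-x_0\mp\delta/2$. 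This is the ingredient your proof is missing: a barrier that interpolates in time between the initial datum and the limiting affine profile, rather than a stationary one confined to a strip whose initial slice is uncontrolled. If you want to salvage your outline, replace the stationary affine barriers by this family (or any comparable travelling/relaxing profile) and run the comparison on $(0,x_0)$ and $(x_0,\ell)$ from time $0$.
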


\begin{proof}
Without loss of generality, we can suppose $u_0$ to be {\it monotone increasing} near $x=x_0$. Given $\gamma$ and $\delta$ some small fixed numbers, by Proposition \ref{prop14} we know that there exists $T_1$ such that 
$$\lim_{\e \to 0} u_\e(x,t) = U(x,t)$$
uniformly on any compact set of $[\gamma, \ell-\gamma] \times [T_1, \infty)$. Going further, from Theorem \ref{th:asyhyp} there exists a time $T_2 \geq T_1$ such that
$$|U(x,t)-(x-x_0)| < \frac{\delta}{2}, \qquad \mbox{for} \  (x,t) \in [\gamma,\ell-\gamma] \times [T_2, +\infty].$$
Hence, for any $T \geq T_2$ there exists $\e_0$ depending on $T$ such that for $\e \leq \e_0$
$$|u_\e(x,T)-(x-x_0)| < \delta, \qquad \mbox{for} \  x \in [\gamma,\ell-\gamma].$$
To complete the proof, we now need to describe the behavior of $u_\e$ near the boundary, that is we consider $x \in [0,\gamma] \cup [\ell-\gamma,\ell]$. To this purpose, after fixing $\alpha>1$, we define, for $x \in (0,x_0)$, the function
$$V(x,t)= \frac{\alpha(x-x_0-\delta/2) \, e^t}{\alpha e^t+1-\alpha},$$
to be used as a barrier function. There holds that (notice that $V_{xx}=0$ and recall that $f'(V) = V$)
\begin{equation*}
\begin{aligned}
&V_t -\e \left( h(V) V_x \right)_x+ V\left(V_x-1 \right) = \\
& = \frac{ \alpha(x-x_0-\frac{\delta}{2}) \, e^t \, (1-\alpha)}{(\alpha e^t+1-\alpha)^2} -\e h'(V)V_x^2  + \frac{\alpha (x-x_0-\delta/2)  \, e^t}{\alpha e^t+1-\alpha}\cdot \frac{(\alpha-1)}{\alpha e^t+1-\alpha} \\
& = -\e h'(V)V_x^2.
\end{aligned}
\end{equation*}
The resulting term is negative since $h'(V)  \geq 0$ by \eqref{segnodih}, being $V(x,t) <0$ for $x \in (0,x_0)$. Hence, the function $V(x,t)$ is a sub-solution to \eqref{prob}. Moreover
\begin{equation*}
V(0,t)<0=u_\e(0,t) \quad \mbox{and} \quad V(x_0,t)=-\frac{\alpha \, \delta/2 \, e^t}{\alpha e^t+1-\alpha} <-\frac{\delta}{2},
\end{equation*}
while, if $\alpha$ is large enough
$$
V(x,0)=\alpha(x-x_0-\delta/2) < u_0(x).
$$
Finally, for $t \to +\infty$, the function $V(x,t) \to x-x_0-\delta/2$, meaning that there exists $T_3 \geq T_2$ sufficiently large so that one has 
$$V({t},x) > x-x_0-\delta,$$
for all $x \in (0,\ell)$ and $t \geq T_3$. Let now $\e_1$ be such that $u_\e(x_0,t) >-\frac{\delta}{2}$ for $t \in [0,T_3]$ and $\e \leq \e_1$; hence, by the comparison principle (and recalling that $V$ is sub-solution for the problem solved by $u_\e$) we have
$$u_\e(x,T_3) \geq V(x,T_3) > x-x_0-\delta, \qquad \mbox{for} \  x \in [0,x_0].$$
The second inequality of \eqref{tesi} then follows by choosing $T_{\gamma,\delta}=T_3$ and $\bar \e= \min\{\e_0,\e_1 \}$. In order to prove the first inequality one can proceed as above by using
$$V_1(x,t)= \frac{\beta(x-x_0+\delta/2) \, e^t}{\beta e^t+1-\beta},$$
 as a super-solution to \eqref{prob} in $(x_0,\ell)$, for $\beta$ large enough.
\end{proof}
Combining estimates \eqref{tesi}, one can deduce that for any (arbitrarily small) $\gamma, \delta>0$ there exist a time  $T=T_{\gamma,\delta}$ and $\bar \e$ (depending on $T$) such that for any $\e \leq \bar \e$
$$|u_\e(x,T)-(x-x_0)| < \delta, \qquad \mbox{for all} \  x \in [\gamma, \ell-\gamma].$$
The latter estimate shows that the solution to \eqref{prob} with initial datum $u_0$ of type B  becomes close to the line $x-x_0$ for $t=T$, as   {depicted in Figure \ref{fig1}}, for $T=5$. 
However, once the solution has this shape, it maintains it for a long time interval (in Figure \ref{fig1} the solution is still there for $T_1= \mathcal{O}(10^8)$). Hence, even if $u_{1^-,\e}$ is not a stable configuration for the system (recall Theorem \ref{stabthm} and the subsequent remark) the motion of the solution towards one of the equilibrium configurations occurs in an exponentially slow way (for more details, see the numerical analysis performed in the following Section).

\begin{figure}[hbtp]
\centering
\includegraphics[scale=0.4]{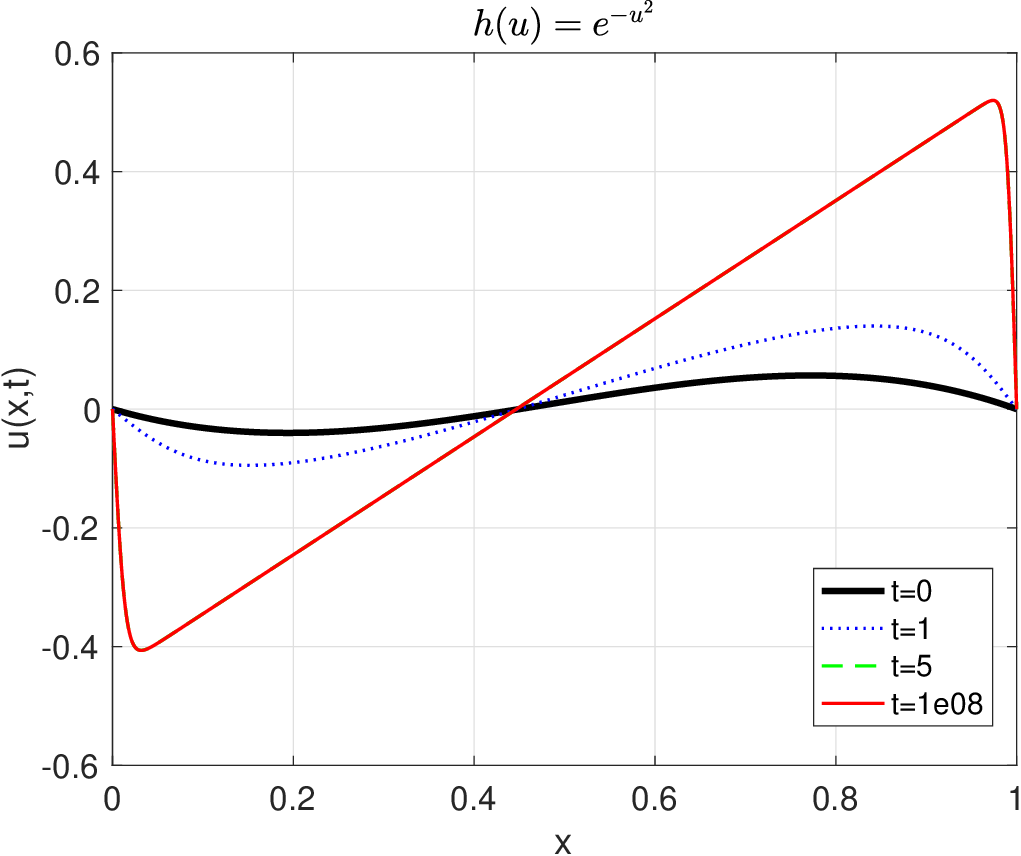}
\caption{\small{Here $h(u) = e^{-u^2}$, $\ell=1$, $\varepsilon = 0.003$ and the initial datum is $u_0(x)= x(1-x)(x-0.45)$; the solution becomes close to the line $x-x_0$, with $x_0=0.45$, for $T=5$ and remains almost still at least until $T_1=10^8$.}}
\label{fig1}
\end{figure}

\subsubsection*{Comments on other types if initial data}
Similarly to Theorem \ref{teo:finitetime}, one can prove the following results.
\begin{itemize}
\item If $u_0$ is of type A, then for any $\gamma, \delta>0$ there exist $T_1$ and $\bar \e_1$ such that, for any $\e \leq \bar \e_1$, we have 
$$\mbox{either}\quad |u_\e(x,T_1)-x| < \delta \qquad \mbox{or} \quad |u_\e(x,T_1)-(x-\ell)| < \delta,$$
for all $x \in [\gamma, \ell-\gamma]$, depending on whether $u_0>0$ or $u_0<0$.
\item If $u_0$ is of type C, then for any $\gamma, \delta>0$ there exist $T_2$ and $\bar \e_2$ such that, for any $\e \leq \bar \e_2$ 
$$|u_\e(x,T_2)-\left(x \, \chi_{\left[0,x_0 \right)} + (x-\ell) \chi_{\left(x_0, \, \ell \right]}\right)| < \delta,$$
for all $x \in [\gamma, \ell-\gamma]$.
\end{itemize}

The aforementioned statements are depicted in Figures \ref{fig2}-\ref{fig3}. We notice that if $u_0$ does not change sign the corresponding time dependent solution becomes close to either $x$ or $x-1$ in short times (in both cases $T_1=2.5$, see Figure \ref{fig2}). 
\begin{figure}[hbtp]
\includegraphics[scale=0.4]{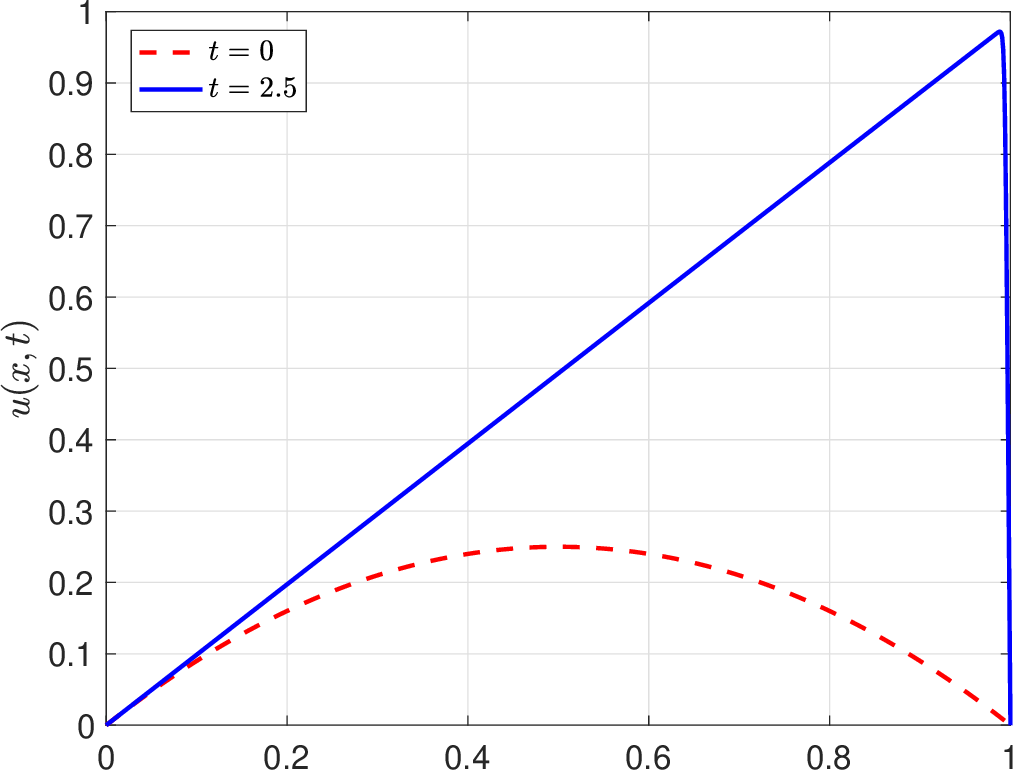}
\includegraphics[scale=0.4]{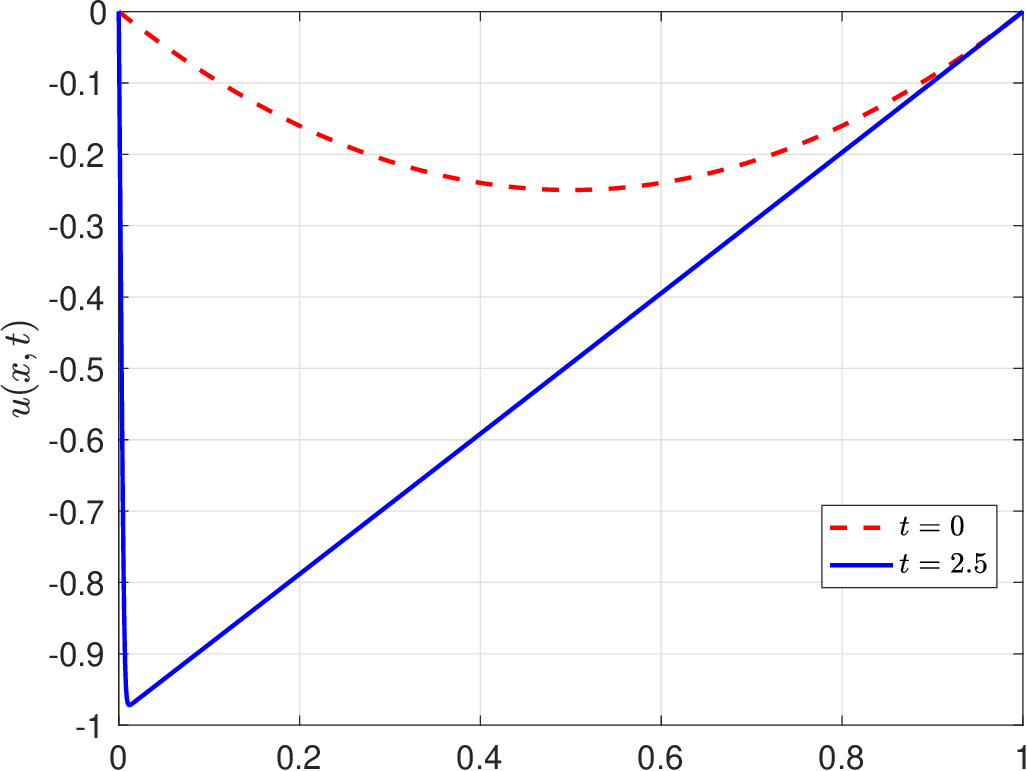}\\
\caption{\small{Here $h(u) = e^{-u^2}$, $\ell=1$, $\varepsilon = 0.003$ and the initial data are (from left to right respectively) $u_0=x(1-x)$ and $u_0= -x(1-x)$. }}
\label{fig2}
\end{figure}

Similarly,  in the case of $u_0$ of type C (Figure \ref{fig3}), the solution becomes rapidly close to the function  $x \, \chi_{\left[0,x_0 \right)} + (x-\ell) \chi_{\left(x_0, \, \ell \right]}$ in a finite (short) time $T_2=2.5$.
\begin{figure}[hbtp]
\begin{center}
\includegraphics[scale=0.4]{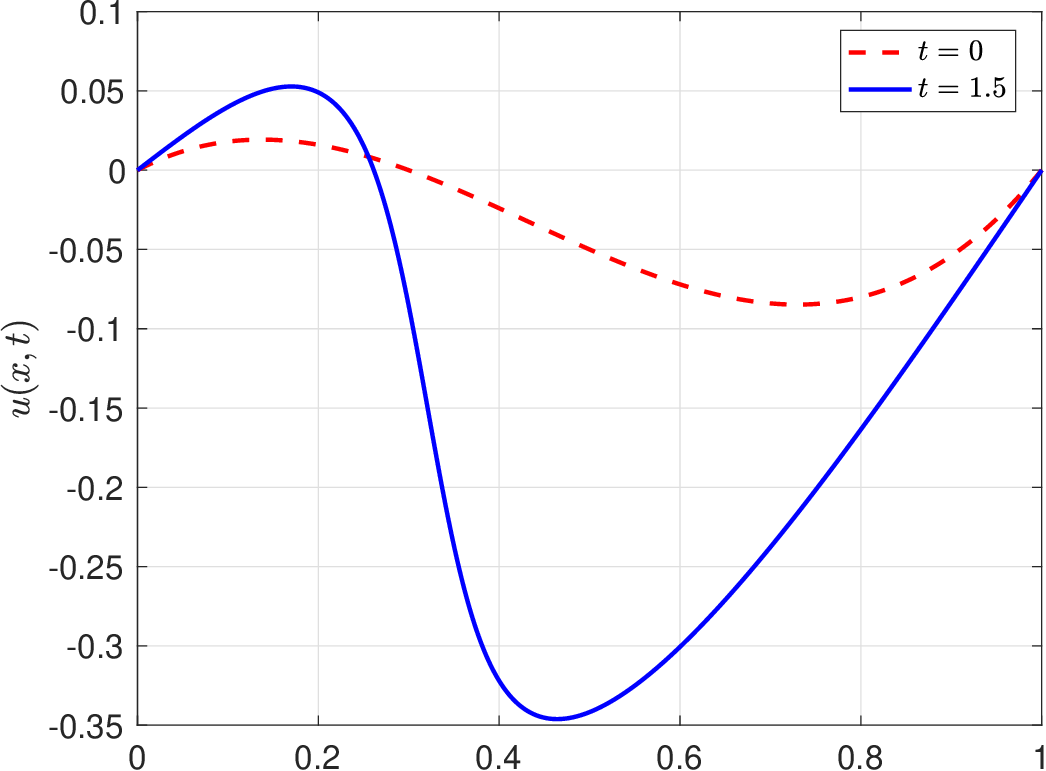}
\includegraphics[scale=0.4]{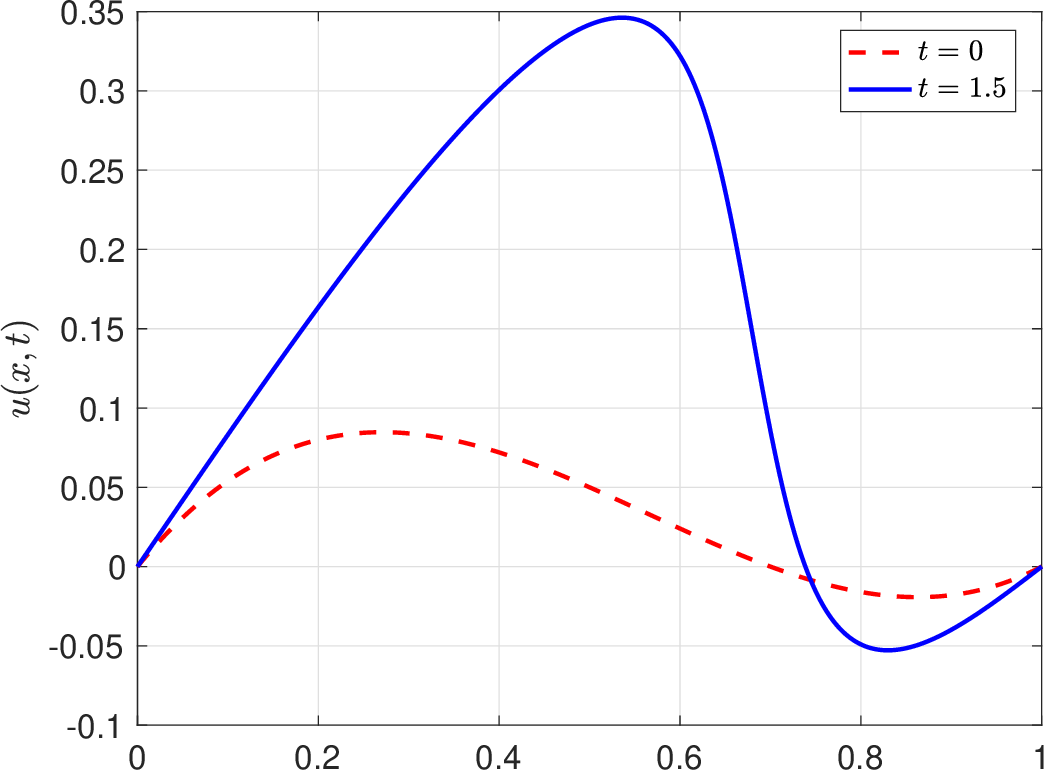}
\caption{\small{Here $h(u) = e^{-u^2}$, $\ell=1$, $\varepsilon = 0.003$. The initial data are $u_0=-x(1-x)(x-0.3)$ on the left figure and $u_0=-x(1-x)(x-0.7)$ on the right. }}
\label{fig3}
\end{center}
\end{figure}

\section{Numerical solutions}\label{numerics}
We conclude the paper with some numerical simulations showing the evolution of the solutions to \eqref{prob}. All the numerical computations are done with the purpose of illustrating the theoretical results concerning the stability properties of the steady states $u_{\pm,\e}$ and the instability ones of the states ${u_{1^\pm,\e}}$. 

Numerical computations are performed using the built-in solver \texttt{pdepe} by \textsc{Matlab}$^\copyright$, which is a set of tools to solve PDEs in one space dimension.

In all the examples we consider problem \eqref{prob} with $h$ given by either $h(u)=e^{-u^2}$ or $h(u)= \frac{1}{1+u^2}$, while $f(u)=\frac{u^2}{2}$; all these functions satisfy assumptions \eqref{segnodih}-\eqref{segnodif}.

\subsection{Test 1: fast convergence} 
We start with an example illustrating the stable behavior of the steady states $u_{\pm,\e}$, hence illustrating the results of Theorem \ref{stabthm}. In the first numerical simulation we consider the function $h(u)=e^{-u^2}$ and we choose initial data that do not change sign in the interval $(0,\ell), \ell=1$. Precisely, on the right picture we have $u^+_0=x(1-x)$ (which is always positive), while on the left $u^-_0=-u_0^+= -x(1-x)$.
\begin{figure}[h]
\includegraphics[scale=0.4]{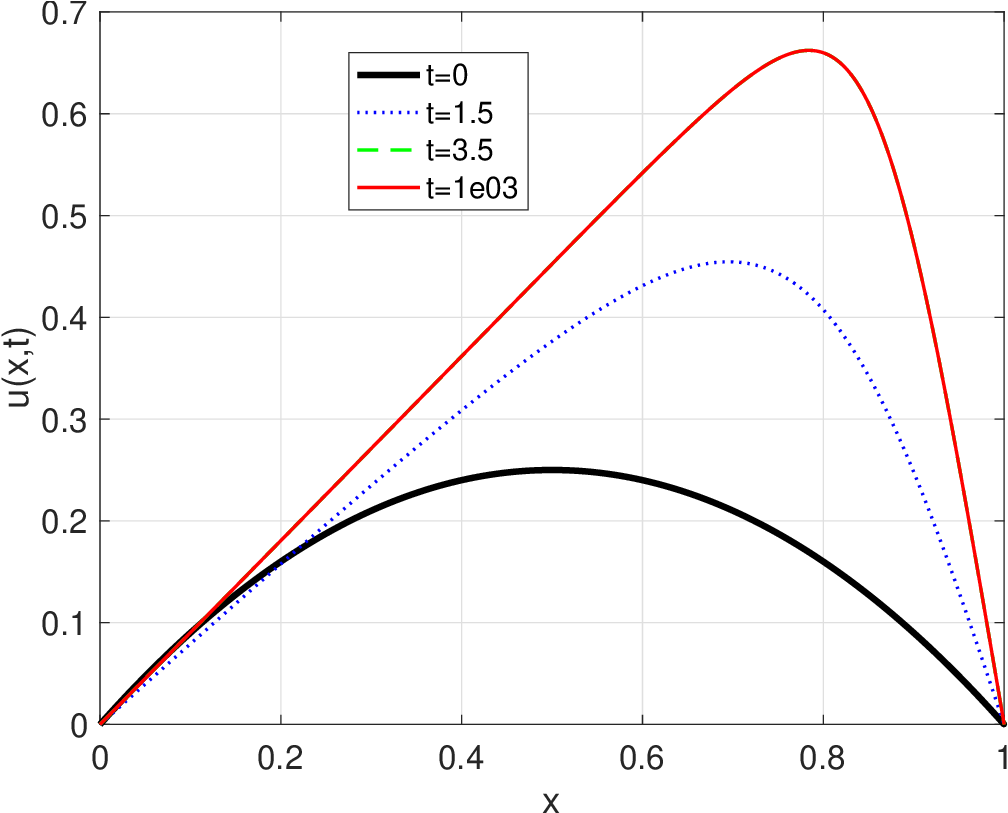}
\includegraphics[scale=0.4]{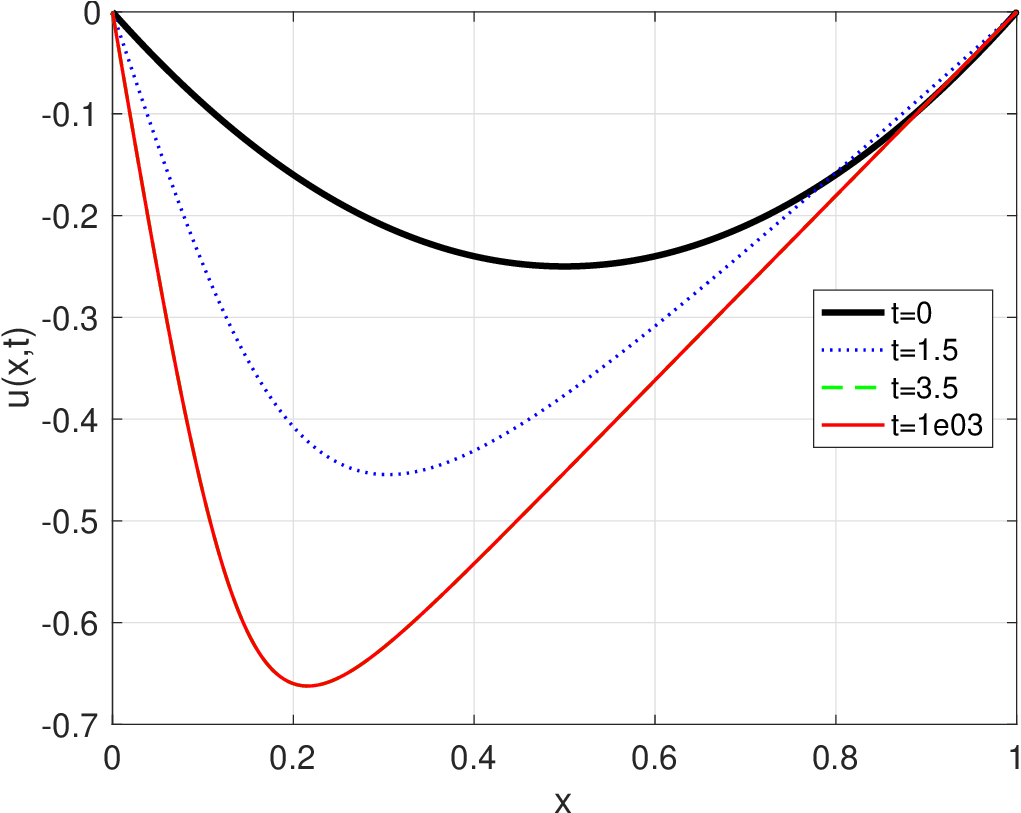}
\caption{\small{Test 1 - fast convergence to the stable steady states $u_{\pm,\e}$ for $\e=0.06$. The solution has already reached the stable profile at $t=3.5$; indeed, the solutions at $t=3.5$ and $t=10^3$ are indistinguishable.} }
\label{figB}
\end{figure}

In Figure \ref{figB}, we plot the solution of the IBVP \eqref{prob} for different times $t$ and $\e=0.06$.
 It can be observed that the solution rapidly converges to either $u_{+,\e}$ (on the left) or $u_{-,\e}$ (on the right). More precisely, the
solution at time $t \approx 3.5$ has basically already reached its final equilibrium configuration, and for $t>3.5$  it is still.
Similarly, in Figure \ref{figA}, we consider the same data but with $\varepsilon = 0.006$. We can notice how the dynamics remains unchanged with respect to the parameter $\e$; precisely, we have still convergence towards either  $u_{+,\e}$ or $u_{-,\e}$ for $t\approx 3.5$ (as before), even if $\e$ is smaller.

\begin{figure}[hbtp]
\includegraphics[scale=0.4]{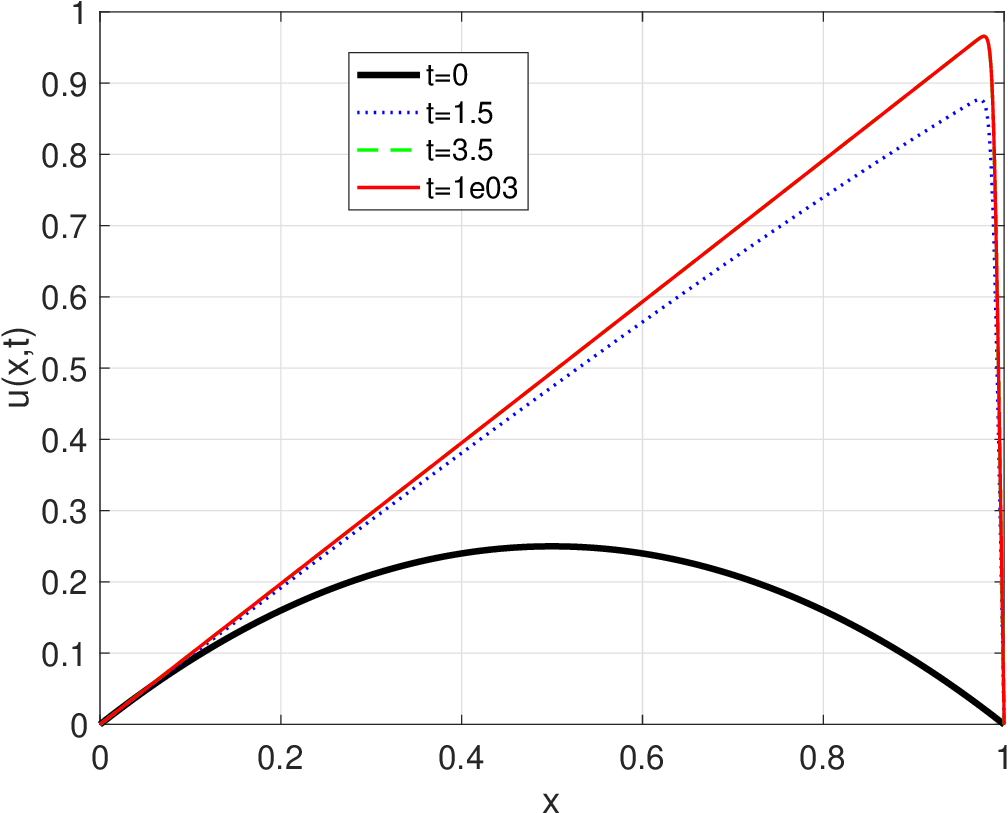}
\includegraphics[scale=0.4]{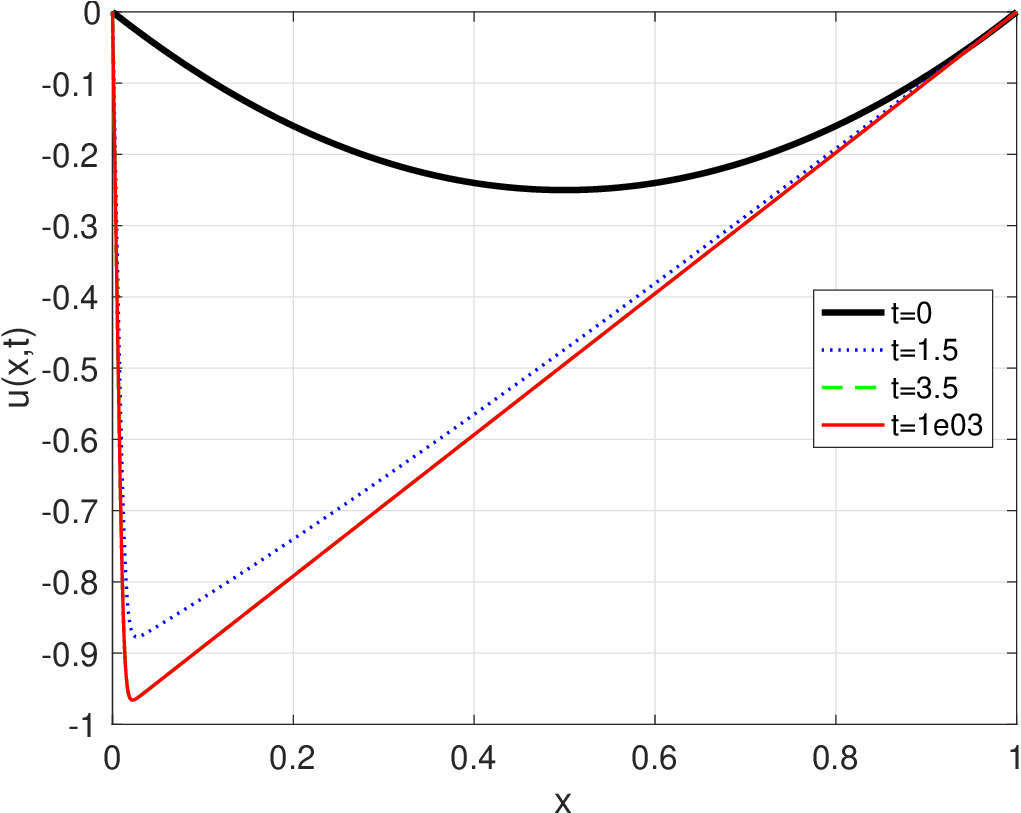}
\caption{\small{Test 1 - fast convergence to the stable steady states $u_{\pm,\e}$ for $\e=0.006$. The solutions at times $t=3.5$ and $t=10^3$ are indistinguishable. }}
\label{figA}
\end{figure}

\subsection{Test 2: instability of the steady state} 
In the next numerical simulation, depicted in Figure \ref{figC}, we show the unstable nature of the steady state  $u_{1^+,\e}$ (see Remark \ref{INSTAB}). In this case we fix $\e=0.06$ and we consider $h(u)=e^{-u^2}$ on the left picture and $h(u)=(1+u^2)^{-1}$ on the right one. The initial datum is given by $u_0(x)=-x(1-x)(x-0.45)$ in the first top panels and $u_0(x)=-x(1-x)(x-0.55)$ in the bottom ones.

\begin{figure}[hbtp]
\includegraphics[scale=0.4]{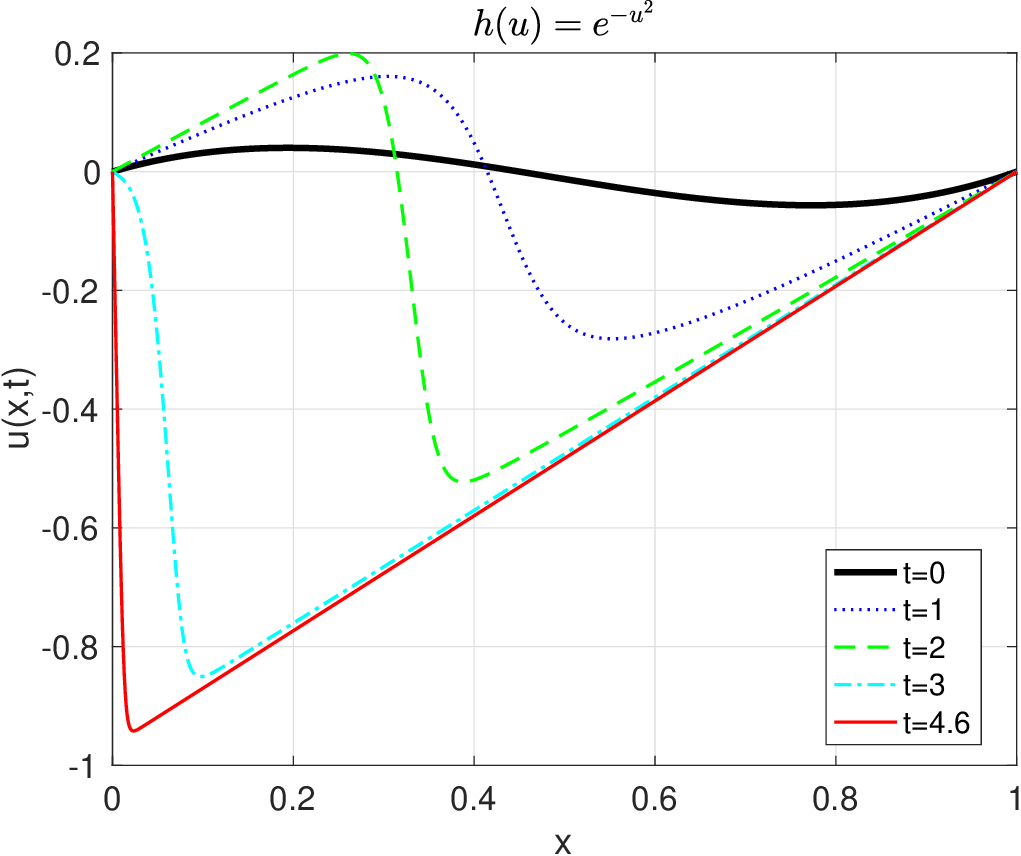}
\includegraphics[scale=0.4]{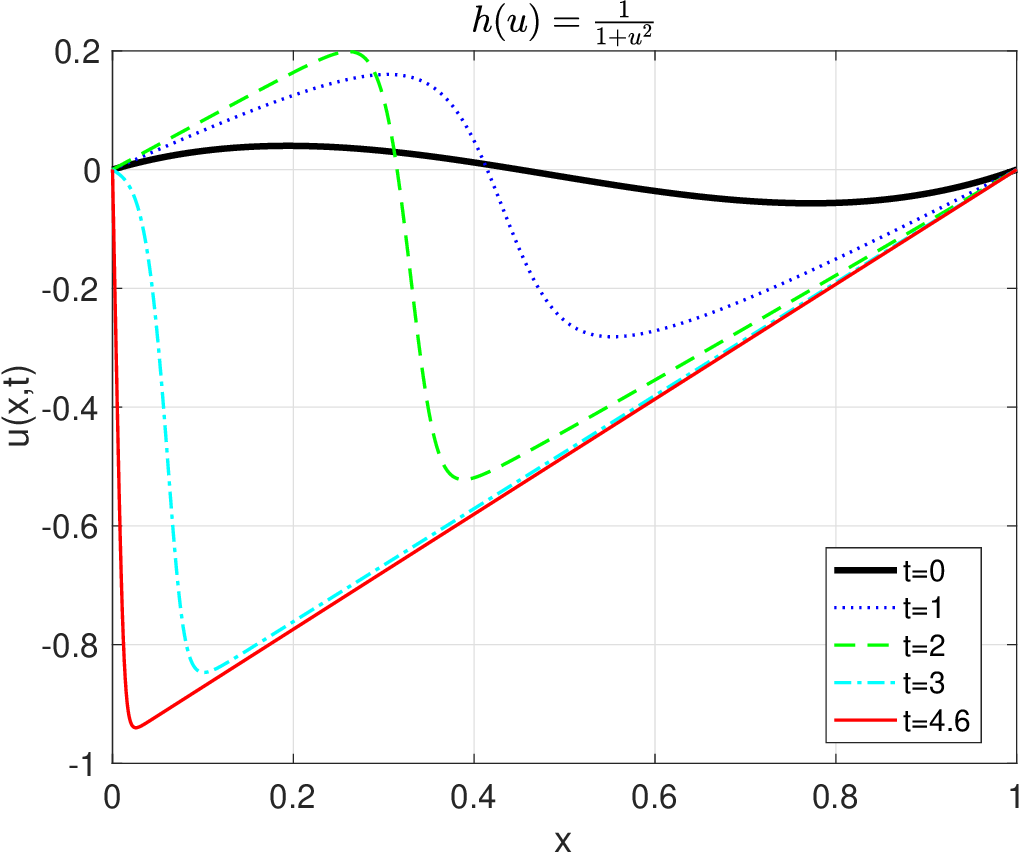}\\
\includegraphics[scale=0.4]{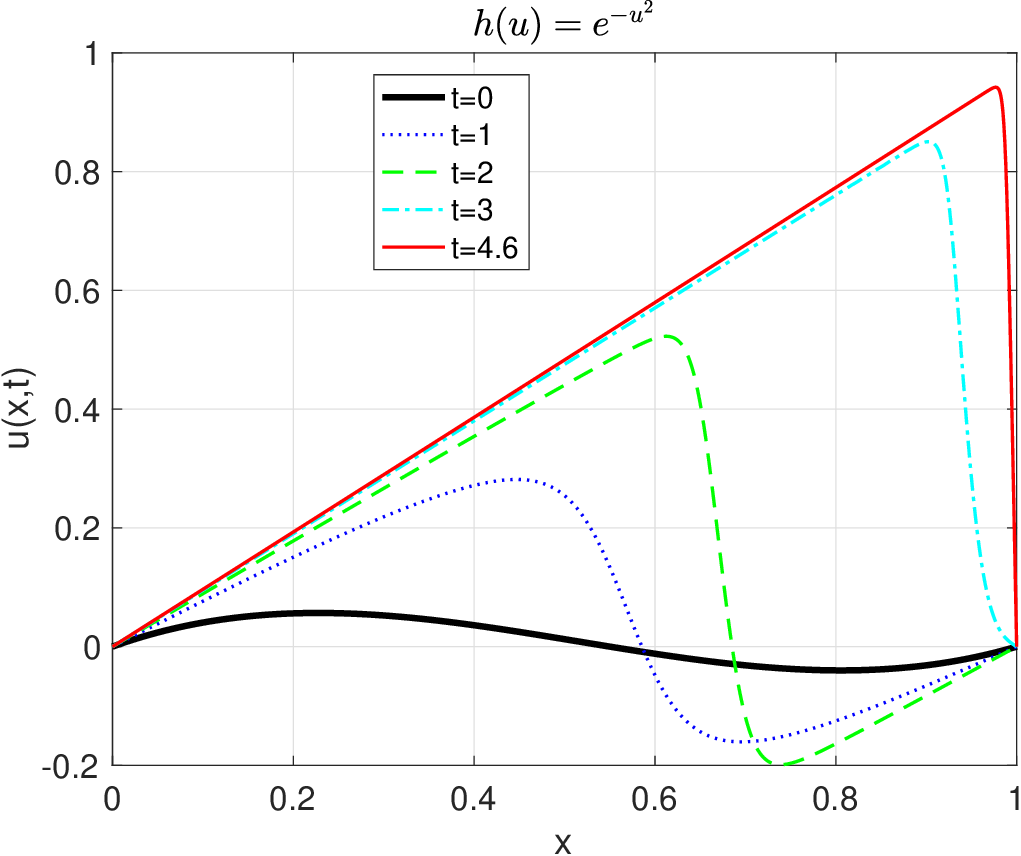}
\includegraphics[scale=0.4]{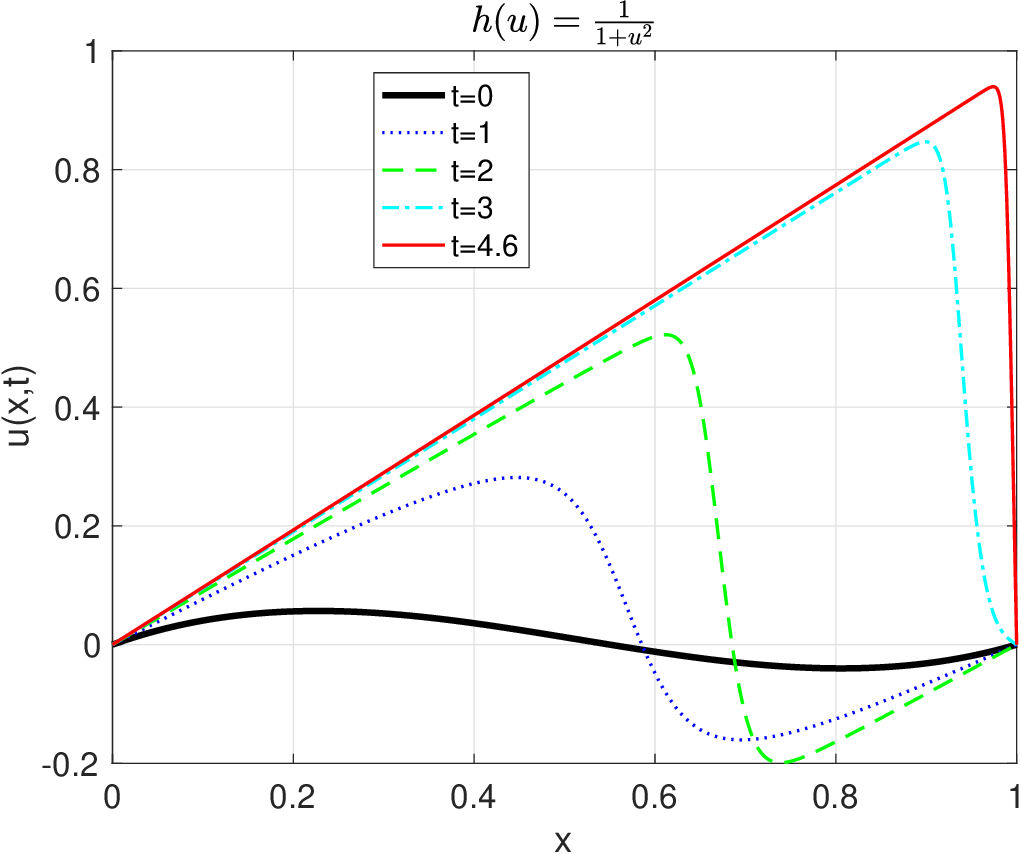}\\
\caption{\small{Test 2 - fast convergence to the stable steady states $u_{\pm,\e}$ for $\e=0.006$. The initial datum  changes sign once in $(0,1)$ and it is given by either $u_0=-x(1-x)(x-0.45)$ (upper figures)  or $u_0=-x(1-x)(x-0.55)$ (lower figures).  } } 
\label{figC}
\end{figure}
We see that the solution is close to $u_{1^+,\e}$ for short times;
however, since $u_{1^+,\e}$ is not a stable steady state, we have a  subsequent (fast) motion towards either $u_{-,\e}$ (see the plots in the first row, where $x_0 =0.45< 1/2$) or $u_{+,\e}$ (plots in the second row, where $x_0=0.55$), and this motion occurs again in short times.

We finally notice how the evolution of the solution remains basically unchanged with respect to the choice of the function $h$.

\subsection{Test 3: metastable behavior} 
In this numerical test, we  consider initial data of type B\footnote{
$u_0 <0$ in $(0,x_0)$ and $u_0>0$ in $(x_0,\ell)$ for some $x_0 \in (0,\ell)$} for which the so called {\it metastable behavior} appears. In particular, we see that the solution exhibits a first transient phase in which it is
close to some non-stable stationary state (in this case $u_{1^-,\e}$) for an exponentially long time before converging to its asymptotic limit.

\begin{figure}[hbtp]
\includegraphics[scale=0.4]{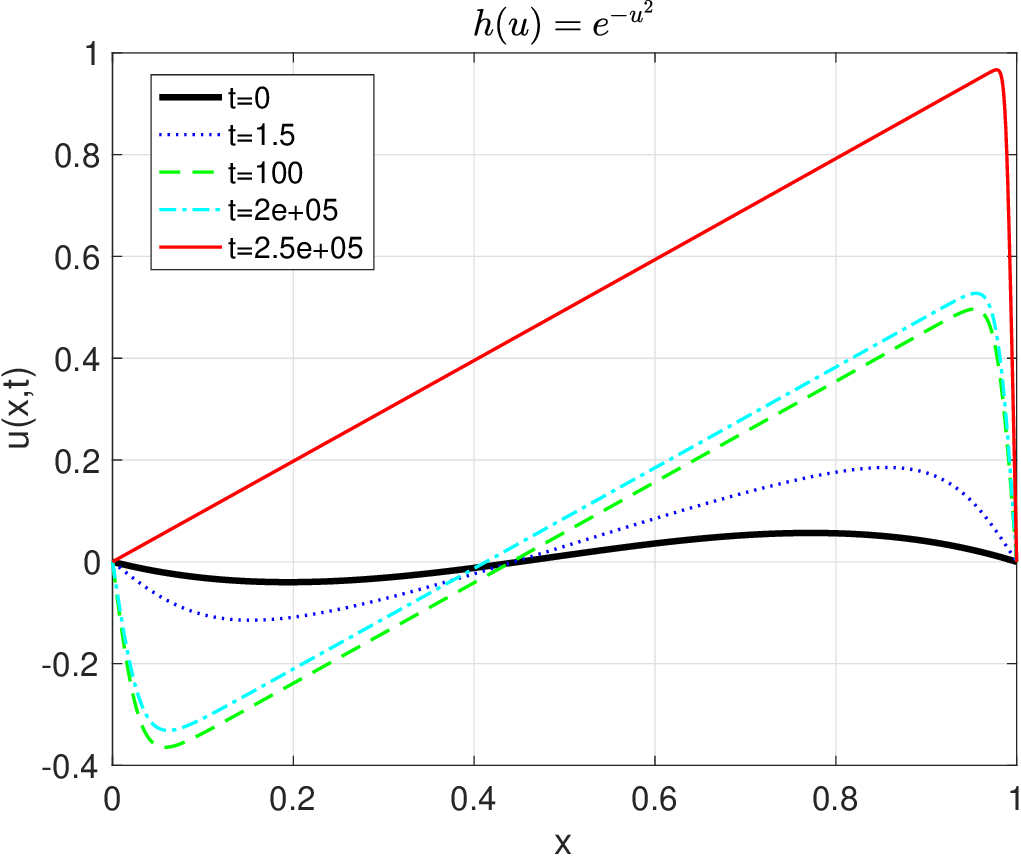}
\includegraphics[scale=0.4]{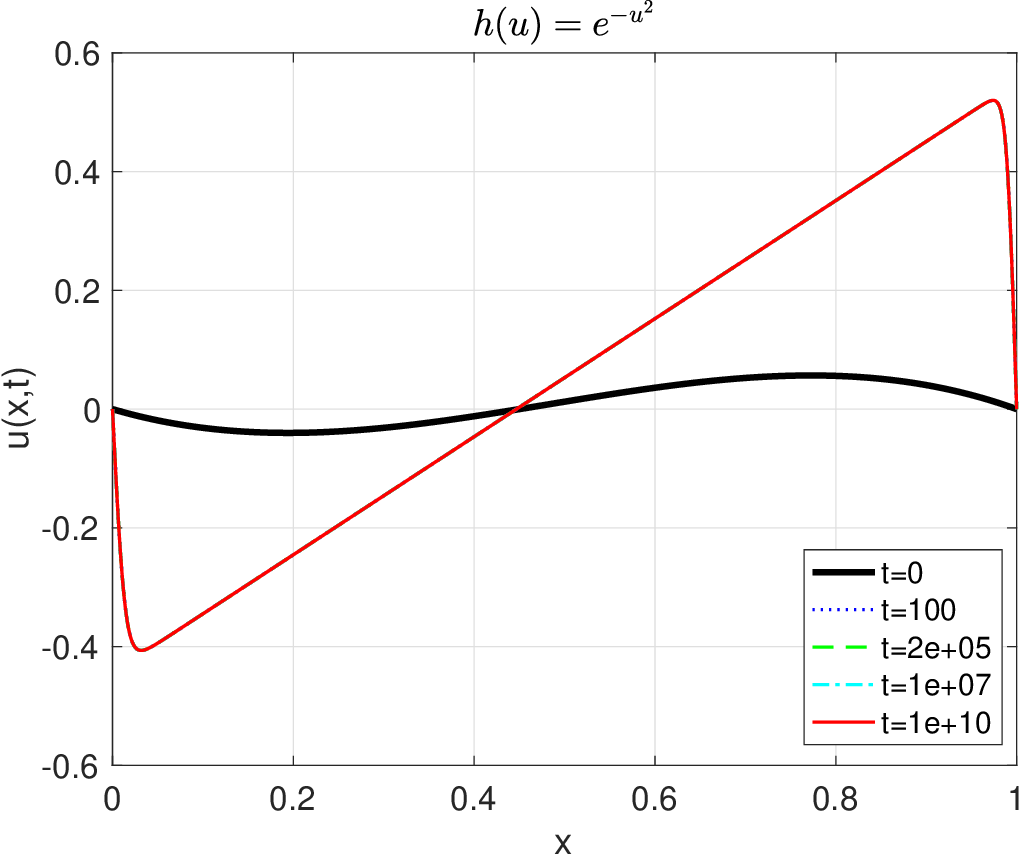}\\
\includegraphics[scale=0.4]{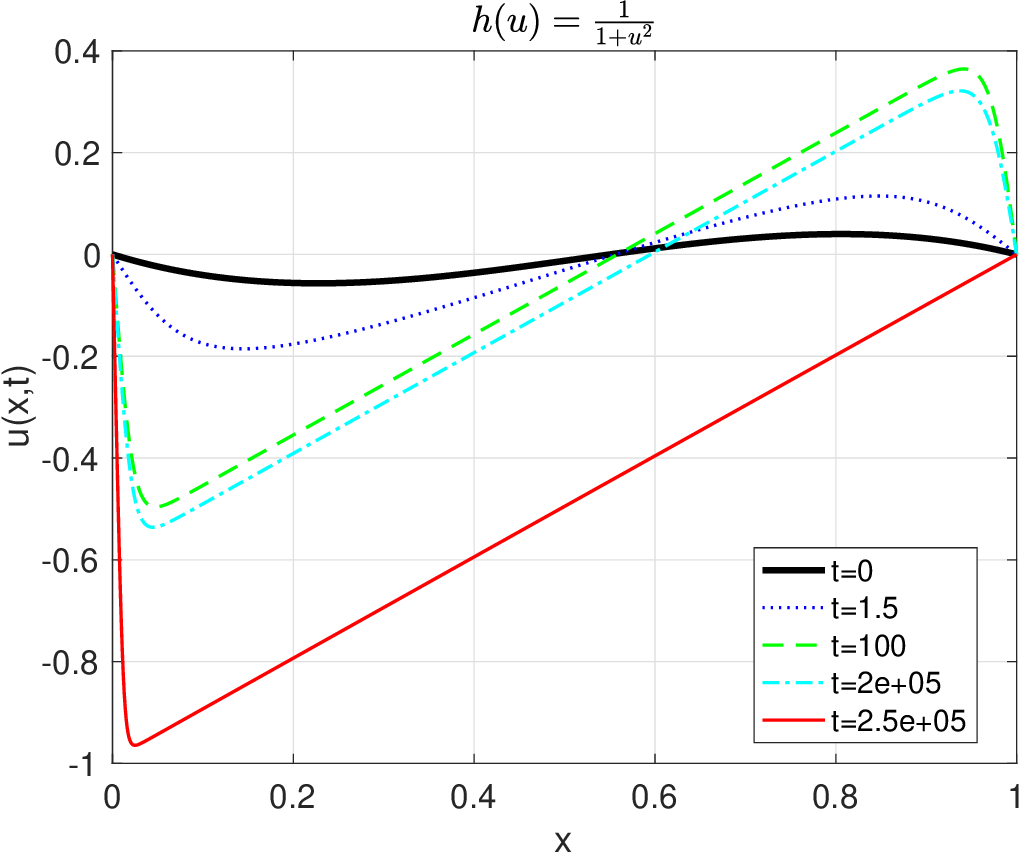}
\includegraphics[scale=0.4]{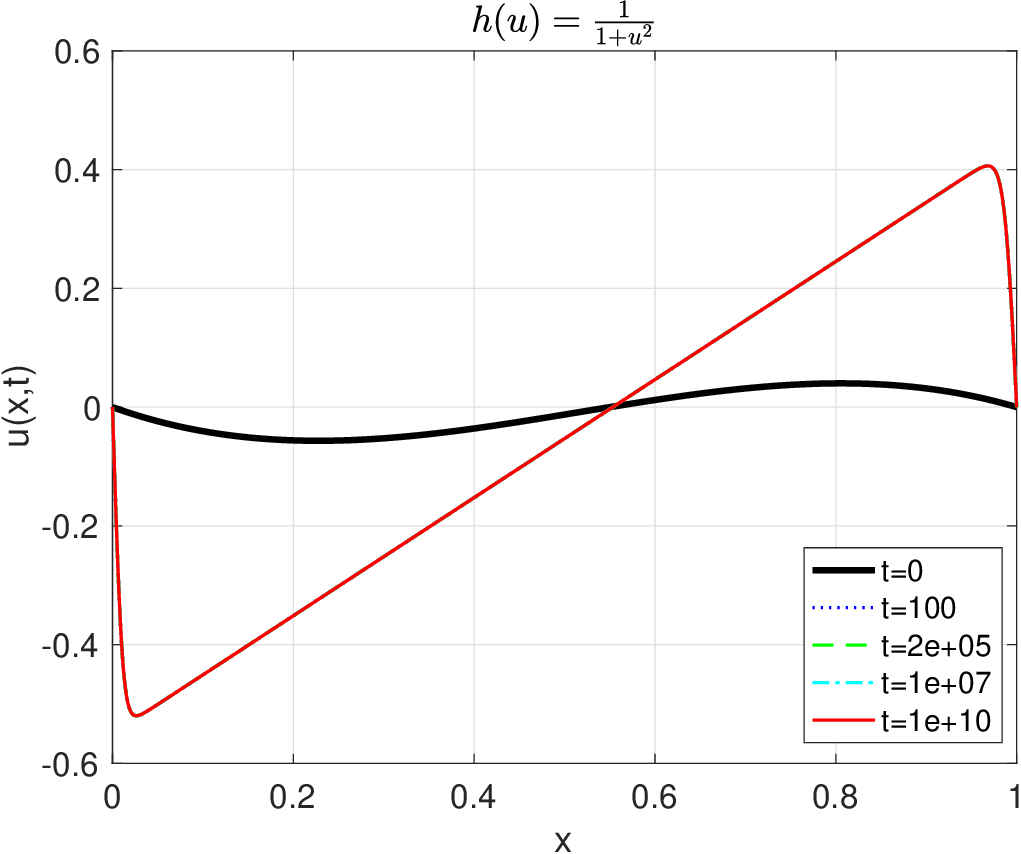}
\caption{\small{Test 3 - we choose $\varepsilon = 0.006$ on the left picture and $\e=0.003$ on the right. }}
\label{figD}
\end{figure}

In the top panels of Figure \ref{figD}, we choose:
$$h(u)=e^{-u^2}, \quad f(u)=\frac{u^2}{2} \qquad \mbox{and} \qquad u_0(x)=x(1-x)(x-0.45),$$
whereas in the bottom panels we study $h(u)=(1+u^2)^{-1}.$
In the top-left panel of Figure \ref{figD}, we can see that the solution develops a profile close to $u_{1^-,\e}$ in a relatively short time, and maintains this configuration up to $t= 2\times10^5$; hence, even if $u_{1^-,\e}$ is an unstable configuration for the system, the motion of the solution towards one of the equilibrium solutions occurs in an exponentially slow way. Precisely, the (meta)stable state $u_{1^-,\e}$ appears stable until times of the order $\mathcal{O}(10^5)$ and, in this case, we observed numerical convergence towards $u_{+,\e}$ for $t=2.5\times 10^5$.

In the top-right panel of Figure \ref{figD}, we decrease $\e= 0.003$ and we see that the solution is  still even for $t=10^{10}$. The numerical simulations was stopped for memory issues.
This result is in line with the theory developed: in particular, the smaller $\e$ the larger the time to reach the stable configuration.
Similar considerations can be done for the plots in the second row, where we only changed the function $h(u)$. In the pictures we kept the same time behavior. 
In addition, a numerical study for the time $T$ needed to reach the stable solution using different functions $h$ is shown in Table \ref{tab:meta}. As expected, we can see that if we decrease $\e$, the time needed increases and that if $h(u) = u$ the convergence happens to be faster. In the case  $h(u)=({1+u^2})^{-1}$ the time needed  to reach the stable solution is slightly smaller than the case with $h(u)=e^{-u^2}$. The results are clearly comparable.

\begin{table}[htbp]
\centering
\begin{tabular}{|c|c|}
\hline 
$\varepsilon$ & T\\
\hline
0.024 & 4\\
0.012 & 20\\
0.006 & 7.7e+04\\
\hline
\end{tabular}
\begin{tabular}{|c|c|}
\hline 
$\varepsilon$ & T\\
\hline
0.024 & 3.2\\
0.012 & 21.75\\
0.006 & 2.14e+05\\
\hline
\end{tabular}
\begin{tabular}{|c|c|}
\hline 
$\varepsilon$ & T\\
\hline
0.024 & 6\\
0.012 & 22\\
0.006 & 2.5e+05\\
\hline
\end{tabular}
\vspace{0.5cm}
\caption{Test 3 - time needed to reach the stable solution for different $\e$. The three tables consider $h(u) = 1$ on the left, $h(u)=\frac{1}{1+u^2}$ in the middle and $h(u)=e^{-u^2}$ on the right.}
\label{tab:meta}
\end{table}

Finally,  in Figure \ref{fig:spost} we show how the intersection with the $x-$axis of the solutions of \eqref{prob} changes  over time. We tested different $\e$ and one can see the speed of the convergence, according to what reported in Table \ref{tab:meta}. The study is done for $h(u)= e^{-u^2}$.

\begin{figure}[htbp]
\centering
\includegraphics[scale=0.4]{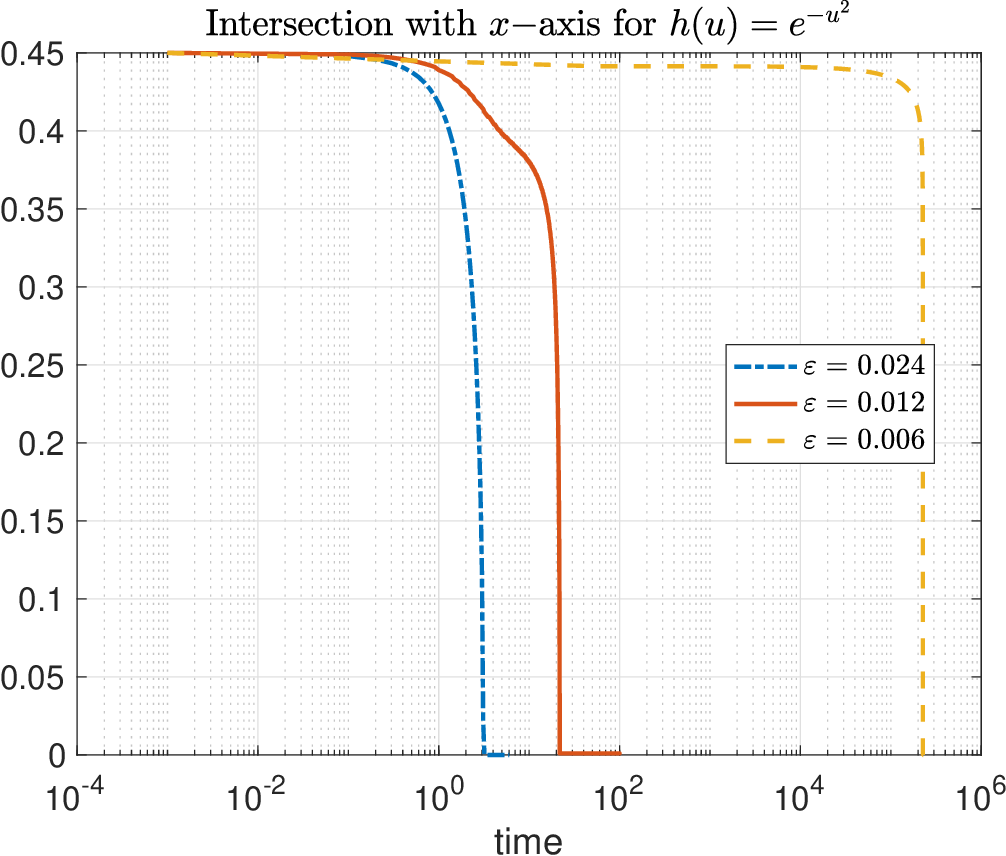}
\caption{Test 3 - intersection of $u(x,t)$ with the $x-$axis for different $\e$.}
\label{fig:spost}
\end{figure}

\newpage
\subsection{Test 4: Discontinuous initial data} 
These tests illustrate how the metastable behavior is still present even if considering  either a discontinuous initial condition (see \eqref{ex51}) or an initial datum which is continuous but with discontinuous derivative at some points of the interval (see \eqref{ex52}).
\begin{equation}\label{ex51}
u_0(x) = 
\left\{\begin{aligned} 
&- x \quad &\mbox{if} \quad x \in [0,0.48],\\
&0.5-\frac{0.5}{0.7} (x-0.3) \quad &\mbox{if} \quad x \in [0.48,1].
\end{aligned}\right.
\end{equation}
\begin{equation}\label{ex52}
u_0(x) = 
\left\{\begin{aligned} 
&- 0.5 x \quad &\mbox{if} \quad x \in [0,0.2],\\
&-0.1+\frac{0.2}{0.6}( x-0.2) \quad &\mbox{if} \quad x \in [0.2,0.8],\\
&- 0.5 (x-1) \quad &\mbox{if} \quad x \in [0.8, 1].\\
\end{aligned}\right.
\end{equation}
In both cases we can see that the solution evolves into a continuous profile in a relatively short time; we also observe that the zero of the solution at time $t=1.5$ is located exactly at the discontinuous point (left picture) or at the zero of the initial datum (right picture). After developing a smooth profile, the solutions will exhibit the same metastable behavior of the previous examples and slowly converge to one of the stable steady states of the system.

\begin{figure}[hbtp]
\includegraphics[scale=0.4]{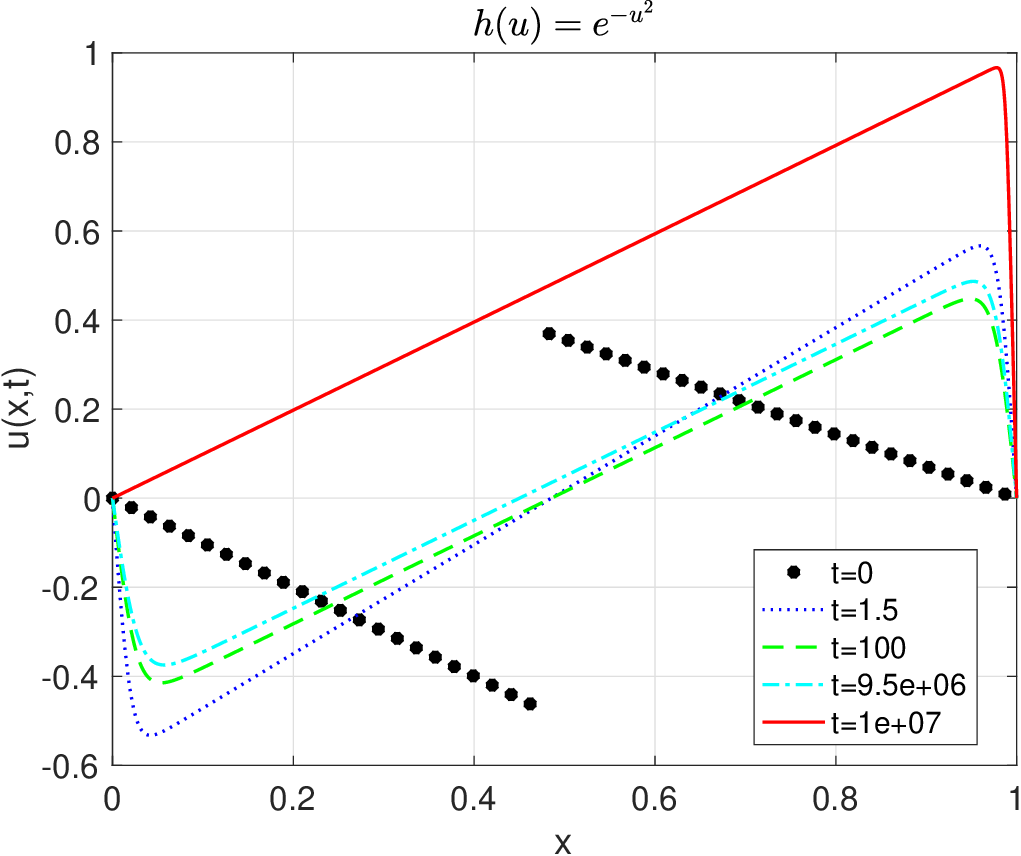}
\includegraphics[scale=0.4]{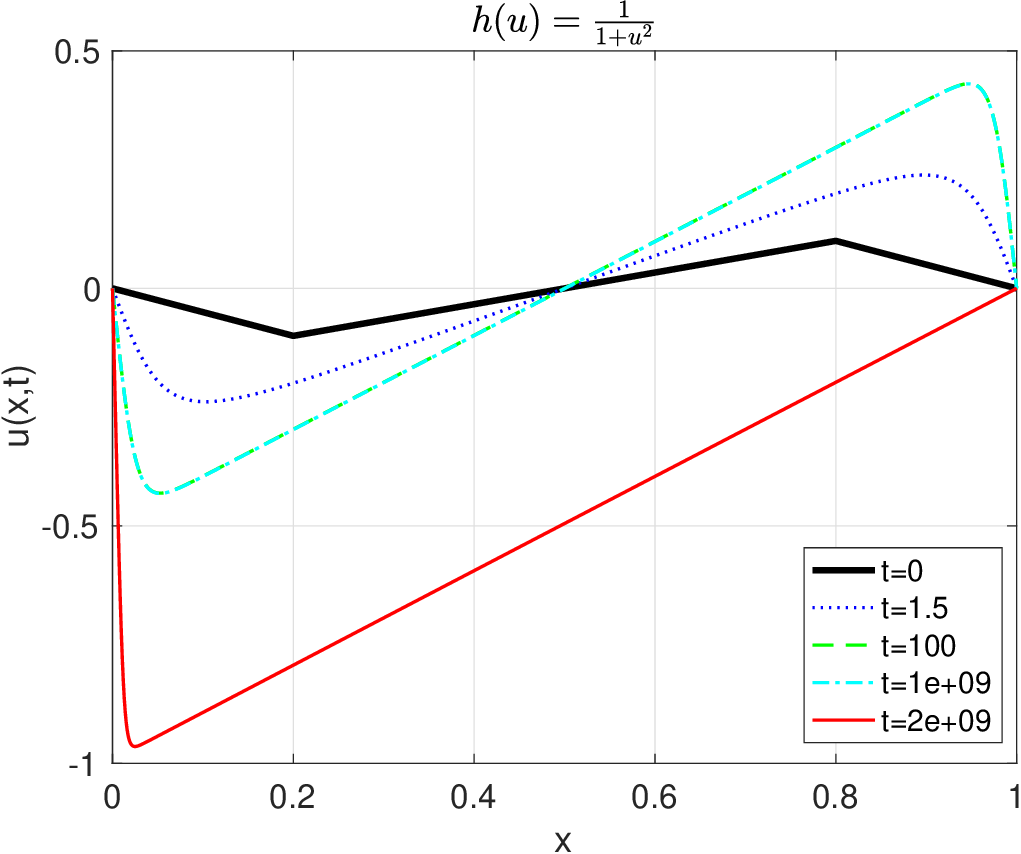}
\caption{Test 4: We choose $\varepsilon = 0.006$ for $u_0$ given in \eqref{ex51} on the left picture and $u_0$ in \eqref{ex52} on the right.}
\end{figure}

\section*{Acknowledgments} 
A. Alla has developed this work within the activities of the project ``Data-driven discovery and control of multi-scale interacting artificial agent systems” (code P2022JC95T), funded by the European Union -- NextGenerationEU, National Recovery and Resilience Plan (PNRR) – Mission 4 component 2, investment 1.1 ``Fondo per il Programma Nazionale di Ricerca e Progetti di Rilevante Interesse Nazionale" (PRIN).
A.A. is also supported by MIUR with PRIN project 2022 funds (P2022238YY5, entitled ‘‘Optimal control problems: analysis, approximation’’). The work of A. De Luca  is partially supported by the INdAM-GNAMPA 2025 grant ``PDE ellittiche che degenerano su variet\`{a} di dimensione bassa e frontiere libere molto sottili'' CUPE5324001950001. 
The work of R. Folino is partially supported by DGAPA-UNAM, program PAPIIT, grant IN-103425.

A. Alla is a member of GNCS INDAM research group.  A. De Luca and M. Strani are members of GNAMPA INDAM research group.

\end{document}